\documentclass[10pt,a4paper]{article}

\author{%
  Axel B\"ucher\\
  \small Universit\"at Heidelberg \\
%  \small Department of Mathematics, Institute of Statistics\\
  \small Institut f\"ur Angewandte Mathematik\\
  \small Im Neuenheimer Feld 294, 69120 Heidelberg, Germany\\
%    \small Ruhr-Universit\"at Bochum \\
%%  \small Department of Mathematics, Institute of Statistics\\
%  \small Fakult\"at f\"ur Mathematik\\
%  \small Universit\"atsstr.\ 150, 44780 Bochum, Germany\\
  \small \texttt{axel.buecher@rub.de}
  \and
  Johan Segers\\
  \small Universit\'e catholique de Louvain\\
  \small Institut de Statistique, Biostatistique et Sciences Actuarielles\\
  \small Voie du Roman Pays 20, B-1348 Louvain-la-Neuve, Belgium\\
  \small \texttt{johan.segers@uclouvain.be}
}

\title{Extreme value copula estimation based on block maxima of a multivariate stationary time series}

\date{\today}

\usepackage[utf8]{inputenc}
\usepackage[authoryear]{natbib}
\usepackage{amsmath}
\usepackage{amsfonts}
\usepackage{amsthm}
\usepackage{amssymb}
\usepackage{bm}
\usepackage{hyperref}
\usepackage{paralist}
\usepackage{graphicx}
\usepackage{booktabs}
\usepackage{enumerate}
\numberwithin{equation}{section}
% \usepackage{refcheck}
%\usepackage{ulem}

%%%%%%%%%%%%%%%%%%%%%%%%%%%%%%%%%%%%%%%%%%%%%%%%%%
\usepackage{color}

%\newcommand{\remo}[1]{\textcolor{red}{\sout{#1}}}

%%%%%%%%%%%%%%%%%%%%%%%%%%%%%%%%%%%%%%%%%%%%%%%%%%

\newcommand{\floor}[1]{\lfloor{#1}\rfloor}
\newcommand{\ZZ}{\mathbb{Z}}
\newcommand{\RR}{\mathbb{R}}
\newcommand{\NN}{\mathbb{N}}
\newcommand{\reals}{\RR}
\newcommand{\R}{\RR}
\newcommand{\eps}{\varepsilon}
\newcommand{\1}{I}

\newcommand{\id}{{\rm id}}

\newcommand{\diff}{\mathrm{d}}
\newcommand{\ds}{\diff s}
\newcommand{\dy}{\diff y}

\renewcommand{\Finv}{F^\leftarrow}
\newcommand{\Fbinv}{\bm{F}^\leftarrow}

\newcommand{\Ab}{\mathbb{A}}
\newcommand{\Cb}{\mathbb{C}}

\newcommand{\Eb}{\mathbb{E}}

\newcommand{\Wb}{\mathbb{W}}

\newcommand{\Dc}{\mathcal{D}}
\newcommand{\Fc}{\mathcal{F}}

\newcommand{\tl}{{\scriptscriptstyle [\ell]}}
\newcommand{\weak}{{\ \rightsquigarrow\ }}

\renewcommand{\Pr}{\operatorname{P}}
\newcommand{\expec}{\operatorname{E}}
\DeclareMathOperator{\Var}{Var}
\DeclareMathOperator{\Cov}{Cov}
\DeclareMathOperator{\MSE}{MSE}

\newtheorem{theorem}{Theorem}[section]
\newtheorem{lemma}[theorem]{Lemma}
\newtheorem{condition}[theorem]{Condition}

\newtheorem{corollary}[theorem]{Corollary}
\newtheorem{proposition}[theorem]{Proposition}

\theoremstyle{remark}
\newtheorem{example}[theorem]{Example}

\begin{document}

\maketitle

\begin{abstract}
The core of the classical block maxima method consists of fitting an extreme value distribution to a sample of maxima over blocks extracted from an underlying series.
In asymptotic theory, it is usually postulated that the block maxima are an independent random sample of an extreme value distribution.
In practice however, block sizes are finite, so that the extreme value postulate will only hold approximately.
A more accurate asymptotic framework is that of a triangular array of block maxima, the block size depending on the size of the underlying sample in such a way that both the block size and the number of blocks within that sample tend to infinity.
The copula of the vector of componentwise maxima in a block is assumed to converge to a limit, which, under mild conditions, is then necessarily an extreme value copula.
Under this setting and for absolutely regular stationary sequences, the empirical copula of the sample of vectors of block maxima is shown to be a consistent and asymptotically normal estimator for the limiting extreme value copula.
Moreover, the empirical copula serves as a basis for rank-based, nonparametric estimation of the Pickands dependence function of the extreme value copula.
% The limit distributions of the estimators are the same as if the block maxima had been sampled independently from a distribution whose copula is the limit extreme value copula. 
% The impact of serial dependence is felt rather in the limit copula, which may be different from the extremal attractor of the copula of the stationary distribution of the underlying time series.
The results are illustrated by theoretical examples and a Monte Carlo simulation study.

\medskip

\noindent {\it Keywords:} extreme value copula, block maxima method, weak convergence, empirical copula process, stationary time series, Pickands dependence function, absolutely regular process
\end{abstract}

\newpage

\section{Introduction}

The block maximum method for extreme value analysis essentially consists of the following procedure: partition a long series of data into blocks; for each block, compute the maximum; fit an extreme value distribution to the sample of block maxima. Often, the blocks correspond to months or years of data, whence the name `annual maxima series'. The fitted distribution can then be used to compute tail quantiles or `$T$-year return levels'. The approach was developed and popularized in the classic monograph of \cite{gumbel:1958}. The method is applicable even when the individual `daily' observations are unavailable or when the time series exhibits seasonality, as long as the block size is a multiple of the period length. The procedure can be extended to multivariate series too: compute or just observe block maxima for each of variables separately, and fit a multivariate extreme value distribution to the sample of vectors of componentwise block maxima.

The method is justified by the extremal types theorem: under broad conditions, the only possible limits of affinely normalized block maxima, as the block length tends to infinity, are the extreme value distributions. The conditions allow for temporal dependence, provided certain mixing conditions hold; see \cite{leadbetter:etal:1983} for the univariate case and \cite{Hsi89} and \cite{Hus90} for the multivariate case.

Unlike their univariate counterparts, multivariate extreme value distributions do not constitute a parametric family. In statistical applications, a parametric form is often assumed, an early example being \cite{gumbel:mustafi:1967}. In general, the dependence structure or copula should be max-stable. Several representations of max-stable or extreme value copulas exist; see \citet[Chapter~8]{bgst:2004} for an overview. The representation proposed in \cite{pickands:1981} is a popular one and has led to the concept of a Pickands dependence function.

In the large-sample theory for the block maximum method, the data generating process is nearly always specified as independent random sampling from the limiting extreme value distribution. Seminal papers to this view are \cite{prescott:walden:1980} for the univariate case and \cite{tawn:1988}, \cite{tawn:1990} and \cite{deheuvels:1991} for the multivariate case. However, in the light of the above description, this set-up does not correspond to reality for at least two reasons: first, the block maxima are only approximately extreme value distributed, and second, they are only approximately independent.

A first contribution to the mathematical validation of the block maximum method in a more realistic setting is \cite{dombry:2013}. The starting point is a single series of independent and identically distributed univariate random variables whose  distribution is in the domain of attraction of an extreme value distribution. Consistency is shown for the maximum likelihood estimator for the extreme value index applied to the sample of block maxima extracted from the full sample. The block size tends to infinity so that the extremal types theorem can come into force; at the same time, the block size is of smaller order than the sample size, so that the number of blocks, which determines the size of the sample of block maxima, tends to infinity. In the same set-up, the asymptotic distribution of the probability-weighted moment estimator was addressed by Laurens de Haan at the \emph{8th Conference on Extreme Value Analysis} (Fudan University, Shanghai, July 8--12, 2013),
see also the recent working paper \cite{FerHaa13}.

For multivariate time series, nothing has been done in this direction yet, up to the best of our knowledge. The present paper tries to fill this gap. We focus on the estimation of the limit copula of the vector of componentwise block maxima when the block size tends to infinity. The data generating process is a stationary, multivariate time series. Under weak dependence conditions, the limit copula must then be an extreme value copula \citep{Hsi89}. No parametric assumptions are made regarding this extreme value copula. It can be estimated by the empirical copula of the vectors of block maxima. Moreover, the empirical copula can be used as a basis for the nonparametric estimation of the Pickands dependence function of the extreme value copula. For simplicity, we focus on the minimum distance estimator of \cite{BucDetVol11} and \cite{BerBucDet13}, although alternative procedures could have been considered as well \citep{GdS11,peng:qian:yang:2013}.

We study the sequence of empirical copula processes constructed from the triangular array of vectors of block maxima as the block size and the number of blocks tend to infinity. We find that if the underlying series is absolutely regular, the limit process is the same Gaussian process as if the block maxima were sampled independently from a distribution whose copula is already equal to the limiting extreme value copula. This result carries over to the estimation of the Pickands dependence function, where we find the same limit process as in \cite{BerBucDet13}. This does not mean that the temporal dependence can be neglected, however: because of serial dependence, the limiting extreme value copula is in general different from the extreme value attractor of the copula of the stationary distribution of the series. The results are illustrated by means of Monte Carlo simulations.

The structure of the paper is as follows. The objects of interest are described mathematically in Section~\ref{sec:prelim}. The main results on the convergence of the block maxima empirical copula process and the minimum distance estimator for the Pickands dependence function form the subject of Section~\ref{sec:main}. Section~\ref{sec:examples} then contains a number of theoretical examples, whereas Section~\ref{sec:num} reports on the result of a simulation study. Section~\ref{sec:conclusion} concludes. All proofs are collected in the Appendices~\ref{sec:proofs:main} and~\ref{sec:proofs:examples}.

\section{Preliminaries, notations, and assumptions}
\label{sec:prelim}

Consider a $d$-variate stationary time series $\bm{X}_t = (X_{t,1}, \ldots, X_{t,d})$, $t \in \ZZ$. For simplicity, assume that the univariate stationary margins are continuous. A sample of size $n$ is divided into $k$ blocks of length $m$, so that $k = \floor{n/m}$, the integer part of $n/m$, and possibly a remainder block of length $n - km$ at the end. The maximum of the $i$th block in the $j$th component is denoted by
\begin{align*}
  M_{m,i,j} = \max \{ X_{t,j} : t \in (im-m, im] \cap \ZZ \}.
\end{align*}
Let $\bm{M}_{m,i} = (M_{m,i,1}, \ldots, M_{m,i,d})$ be the vector of maxima over the $d$ variables in the $i$th block. For fixed block length $m$, the sequence of block maxima $(\bm{M}_{m,i})_i$ is a stationary process too.

The distributions functions of the block maxima are denoted by
\begin{align*}
  F_m( \bm{x} ) &= \Pr[ \bm M_{m,1} \le \bm{x} ], &
  F_{m,j}( x_j ) &= \Pr[ M_{m,1,j} \le x_j ],
\end{align*}
for $\bm{x} \in \reals^d$ and $j \in \{1, \ldots, d\}$. Observe that $F_1$ is the distribution function of $\bm{X}_1$. If the random vectors $\bm{X}_t$ are serially independent, we have $F_m = F_1^m$. In the general, stationary case, the relation between $F_m$ and $F_1$ is more complex.

The margins of $\bm{X}_1$ being continuous, the margins of $\bm{M}_{m,1}$ are continuous as well. Let $C_m$ be the (unique) copula of $F_m$, which, in the serially independent case, can be written as $C_m(\bm u) = \{C_1(u_1^{1/m}, \dots, u_m^{1/m})\}^m$, $\bm u=(u_1, \dots, u_d) \in [0,1]^d$. In the present context, the domain-of-attraction condition reads as follows.

\begin{condition} 
\label{cond:Cinfty}
There exists a copula $C_\infty$ such that
\[
  \lim_{m \to \infty} C_m(\bm{u}) = C_\infty(\bm{u}) \qquad (\bm{u} \in [0, 1]^d).
\]
\end{condition}

Typically, the limit $C_\infty$ will be an extreme value copula \citep{Hsi89,Hus90}. Below we will assume that the time series $(\bm{X}_t)_t$ is absolutely regular or $\beta$-mixing, which, by Theorem~4.2 in \cite{Hsi89}, is already sufficient for the latter statement. However, $C_\infty$ will in general be different from the extreme value attractor of $C_1$; see for instance Section~\ref{subsec:MM}. If the copula $C_\infty$ in Condition~\ref{cond:Cinfty} is an extreme value copula, it admits the representation
\begin{multline}
\label{eq:Pickands}
  C_\infty(\bm u) 
  = 
  \exp\left\{ \left( \sum_{j=1}^d \log u_j \right) 
    A_\infty \left( \frac{\log u_2}{\sum_{j=1}^d \log u_j}, \dots, \frac{\log u_d}{\sum_{j=1}^d \log u_j} \right) \right\}
\end{multline}
for $\bm u \in [0,1]^d$. Here $A_\infty: \Delta_{d-1} \to [0,1]$ is called the Pickands dependence function of $C_\infty$. It is a convex function defined on the unit simplex $\Delta_{d-1} = \{ \bm{t} = (t_1, \dots, t_{d-1})  \in [0,1]^{d-1} : t_1 + \dots + t_{d-1} \le 1 \}$ and satisfying the bounds $\max \{1 - t_1 - \dots - t_{d-1}, t_1, \dots, t_{d-1} \} \le A_\infty(\bm{t}) \le 1$; see, e.g., \cite{GudSeg10}.

Applying the probability integral transform to the block maxima yields
\begin{align}
\label{eq:Umi}
  U_{m,i,j} &= F_{m,j}( M_{m,i,j} ), &
  \bm{U}_{m,i} &= (U_{m,i,1}, \ldots, U_{m,i,d}).
\end{align}
The random variables $U_{m,i,j}$ are uniformly distributed on $(0, 1)$ and the distribution function of the random vector $\bm{U}_{m,i}$ is the copula $C_m$. The empirical distribution function of the (unobservable) sample $\bm{U}_{m,1}, \ldots, \bm{U}_{m,k}$ is
\begin{equation}
\label{eq:Ccirc}
  \hat{C}_{n,m}^\circ ( \bm{u} )
  =
  \frac{1}{k} \sum_{i=1}^k \1 ( \bm{U}_{m,i} \le \bm{u} ),
\end{equation}
where $\1(A)$ denotes the indicator variable of the event $A$.

Since the marginal distributions $F_{m,j}$ are unknown, we replace them in \eqref{eq:Umi} by their empirical versions $\hat{F}_{n,m,j}$: for $\bm{x} = (x_1, \ldots, x_d) \in \reals^d$,
\begin{align}
\label{eq:empCDF}
  \hat{F}_{n,m}( \bm{x} )
  &= \frac{1}{k} \sum_{i=1}^k \1 ( \bm{M}_{m,i} \le \bm{x} ), &
  \hat{F}_{n,m,j}( x_j )
  &= \frac{1}{k} \sum_{i=1}^k \1 ( M_{m,i,j} \le x_j ).
\end{align}
The resulting `pseudo-observations' are
\begin{align*}
% \label{eq:Unmi}
  \hat{U}_{n,m,i,j} &= \hat{F}_{n,m,j}( M_{m,i,j} ), &
  \hat{\bm{U}}_{n,m,i} &= (\hat{U}_{n,m,i,1}, \ldots, \hat{U}_{n,m,i,d}).
\end{align*}
In analogy to \eqref{eq:Ccirc}, the empirical copula is then defined as
\begin{equation}
\label{eq:empcop}
  \hat{C}_{n,m} ( \bm{u} )
  =
  \frac{1}{k} \sum_{i=1}^k \1 ( \hat{\bm{U}}_{n,m,i} \le \bm{u} ).
\end{equation}
In practice, it is customary to divide by $k+1$ rather than by $k$ in \eqref{eq:empCDF}; asymptotically, this does not make a difference. An alternative definition of the empirical copula is via
\begin{equation}
\label{eq:empcopalt}
  \hat{C}_{n,m}^{\mathrm{alt}}( \bm{u} )
  = 
  \hat{F}_{n,m} 
  \bigl( 
    \hat{F}_{n,m,1}^\leftarrow(u_1), \ldots, \hat{F}_{n,m,d}^\leftarrow(u_d) 
  \bigr)
\end{equation}
where $H^\leftarrow$ denotes the left-continuous generalized inverse function of a distribution function $H$, defined as 
\[
  H^{\leftarrow}(p) 
  = 
  \begin{cases}
    \inf \{ x \in \reals : H(x) \ge p \} & \text{if $p \in (0,1]$,} \\
    \sup \{ x \in \reals : H(x) = 0 \} & \text{if $p = 0$.} 
  \end{cases}
\]
In the independent case, it is not difficult to see that the difference between $\hat{C}_{n,m}$ and $\hat{C}_{n,m}^{\mathrm{alt}}$ is bounded in absolute value by $d / k$ almost surely. This difference is asymptotically negligible in view of the $O_p(1/\sqrt{k})$ rate of convergence of $\hat{C}_{n,m}$ that will be established in Theorem~\ref{thm:empcopproc}. However, in the case of serial dependence, the situation is more complicated, because with positive probability, there may be ties among the block maxima, even if their distribution is continuous; see for instance the random-repetition process in Subsection~\ref{subsec:RR}. Nevertheless, we will show in Proposition~\ref{prop:empcopalt} that the difference between $\hat{C}_{n,m}$ and $\hat{C}_{n,m}^{\mathrm{alt}}$ is still $o_p(1/\sqrt{k})$.

The serial dependence in the series $(\bm{X}_t)_t$ is controlled via mixing coefficients. For two $\sigma$-fields $\Fc_1$ and $\Fc_2$ of a probability space $(\Omega, \Fc, \Pr)$, let
\begin{align*}
  \alpha(\Fc_1,\Fc_2) &= \sup_{A \in \Fc_1, B \in \Fc_2} | \Pr(A \cap B) - \Pr(A) \Pr(B) |, \\
  \beta(\Fc_1, \Fc_2) &= \sup \frac{1}{2} \sum_{i,j \in I \times J}  | \Pr(A_i \cap B_j) - \Pr(A_i) \Pr(B_j) |,
\end{align*}
where the latter supremum is taken over all finite partitions $(A_i)_{i\in I}$ and $(B_j)_{j\in J}$ of $\Omega$ consisting of events that are $\Fc_1$ and $\Fc_2$ measurable, respectively. The $\alpha$- and $\beta$-mixing coefficients of a time series $(\bm X_t)_{t\in\ZZ}$, not necessarily stationary, are defined, for $n\ge1$, as
\begin{align}
\label{eq:alpha:beta}
  \alpha(n) =  \sup_{t\in \ZZ} \alpha(\Fc_{-\infty}^{t}, \Fc_{t+n}^\infty),  & & 
  \beta(n) =  \sup_{t\in \ZZ} \beta(\Fc_{-\infty}^{t}, \Fc_{t+n}^\infty),
\end{align}
where, for $-\infty \le \ell_1 < \ell_2 \le \infty$, $\Fc_{\ell_1}^{\ell_2}$ denotes the sigma-field generated by those $\bm X_{t}$ with $t \in [\ell_1, \ell_2] \cap \ZZ$.

Recall that $m$ is the block size and $k = \floor{n/m}$ is the number of blocks. In an asymptotic framework, we consider a block size sequence $m_n$ and the associated block number sequence $k_n = \floor{n/m_n}$.

\begin{condition}
\label{cond:alpha-beta}
There exists a positive integer sequence $\ell_n$ such that the following statements hold:
\begin{compactenum}[(i)]
\item $m_n \to \infty$ and $m_n= o(n)$;%, hence, since $k_n=\floor{n/m_n}$, also $k_n \to \infty$ and $k_n=o(n)$;
\item $\ell_n \to \infty$ and $\ell_n = o(m_n)$;
%\item $\sqrt {k_n} \, \ell_n / m_n=o(1)$;
\item $k_n \, \alpha(\ell_n) = o(1)$ and $(m_n/\ell_n) \, \alpha(\ell_n) = o(1)$;
\item $\sqrt{k_n} \, \beta(m_n)=o(1)$.% [follows from the third condition and $\beta(r) =O(r^{-1})$]
\end{compactenum}
\end{condition}

A sufficient condition for \emph{(iii)--(iv)} is that $(k_n + m_n/\ell_n) \, \beta(\ell_n) = o(1)$.
We will occasionally simplify notation by writing $m = m_n$, $k = k_n$ and $\ell = \ell_n$.

\section{Main results}
\label{sec:main}

The central result of the paper is Theorem~\ref{thm:empcopproc} in Section~\ref{subsec:empcop}, claiming weak convergence of the empirical copula process
\begin{equation}
\label{eq:empcopproc}
  \Cb_{n,m} = \sqrt{k} ( \hat{C}_{n,m} - C_m ).
\end{equation}
To arrive at this result, the case of known margins needs to be treated first; this is done in Section~\ref{subsec:emp}. Weak convergence of $\Cb_{n,m}$ is applied in Section~\ref{subsec:minidi} to find a functional central limit theorem for a rank-based, nonparametric estimator of the Pickands dependence function of the limit copula $C_\infty$.

\subsection{Block maxima empirical process}
\label{subsec:emp}

Weak convergence of the empirical copula process $\Cb_{n,m}$ will follow from the functional delta method provided we have a weak convergence result for the process 
\[ 
  \Cb_{n,m}^{\circ} = \sqrt k ( \hat C_{n,m}^\circ - C_m ), 
\]
where $\hat{C}_{n,m}^\circ$ is defined in \eqref{eq:Ccirc}. If the random variables $\bm U_{m,i}$ were serially independent, then the weak convergence of $\Cb_{n,m}^\circ$ would easily follow from Theorem~2.11.9 in \cite{VanWel96}. The case of serial dependence is reduced to the independence case by a blocking technique and a coupling argument.

\begin{theorem}[Block maxima empirical process]
\label{thm:empproc}
Let $(\bm{X}_t)_{t\in \ZZ}$ be a stationary multivariate time series with continuous univariate margins. If Conditions~\ref{cond:Cinfty} and~\ref{cond:alpha-beta} hold, then
\[
  \Cb_{n,m}^\circ \weak \Cb^\circ \qquad \text{in } \ell^\infty([0,1]^d),
\]
where $\Cb^\circ$ denotes a centered Gaussian process on $[0,1]^d$ with continuous sample paths and covariance structure 
\[
  \expec[\Cb^\circ(\bm u) \Cb^\circ(\bm v)] = C_\infty(\bm u \wedge \bm v) - C_\infty(\bm u) \, C_\infty(\bm v).
\]
\end{theorem}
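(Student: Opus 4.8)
The goal is to establish weak convergence of $\Cb_{n,m}^\circ = \sqrt{k}(\hat C_{n,m}^\circ - C_m)$ in $\ell^\infty([0,1]^d)$ to the Gaussian process $\Cb^\circ$. The plan is to reduce the serially dependent case to an independent one via a big-blocks/small-blocks decomposition together with a coupling argument that exploits the $\beta$-mixing assumption, and then to apply an empirical-process central limit theorem for the resulting row-wise independent triangular array. Throughout, the underlying sample $\bm U_{m,1},\dots,\bm U_{m,k}$ is itself a triangular array (the copula $C_m$ changes with $n$), so the right tool is a Lindeberg-type CLT for triangular arrays of independent processes, e.g.\ along the lines of Theorem~2.11.9 in \cite{VanWel96}, after we have removed the serial dependence.

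The key steps, in order. \emph{Step 1 (finite-dimensional convergence).} For fixed $\bm u^{(1)},\dots,\bm u^{(p)} \in [0,1]^d$, show that $\bigl(\Cb_{n,m}^\circ(\bm u^{(1)}),\dots,\Cb_{n,m}^\circ(\bm u^{(p)})\bigr)$ converges to a centered Gaussian vector with the stated covariance. Here one groups the $k$ summands $\1(\bm U_{m,i}\le \bm u)$ into alternating big blocks of length (roughly) $\sim m/\ell$ and small blocks of length $\sim$ (something negligible); more precisely, since the $\bm U_{m,i}$'s are already block maxima, one separates consecutive $\bm M_{m,i}$'s by gaps of length $\ell_n$ in the original time scale so that Condition~\ref{cond:alpha-beta}(iv), $\sqrt{k}\,\beta(m_n)=o(1)$, lets us couple the big blocks to independent copies with total variation error $o(1/\sqrt{k})$ in the relevant sense. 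The small-block contributions are controlled to be $o_p(1)$ using stationarity and Condition~\ref{cond:alpha-beta}(iii). On the independent coupled version, the classical Lindeberg CLT applies; the covariance limit $C_\infty(\bm u\wedge\bm v) - C_\infty(\bm u)C_\infty(\bm v)$ comes from $\Cov\bigl(\1(\bm U_{m,1}\le\bm u),\1(\bm U_{m,1}\le\bm v)\bigr) = C_m(\bm u\wedge\bm v) - C_m(\bm u)C_m(\bm v) \to C_\infty(\bm u\wedge\bm v) - C_\infty(\bm u)C_\infty(\bm v)$ by Condition~\ref{cond:Cinfty}, together with the fact that the between-blocks covariances vanish after coupling. \emph{Step 2 (asymptotic tightness/equicontinuity).} Establish asymptotic tightness of $\Cb_{n,m}^\circ$ in $\ell^\infty([0,1]^d)$. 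For the indicator class $\{\bm x \mapsto \1(\bm x \le \bm u) : \bm u \in [0,1]^d\}$ (a VC class, hence uniformly manageable), one again passes to the coupled independent array and invokes a maximal inequality / bracketing or uniform-entropy bound; the bracketing numbers with respect to the $L^2(C_m)$ seminorm are uniformly controlled because $C_m$ has uniform margins, and convergence $C_m \to C_\infty$ ensures the limiting seminorm is well-defined with $\Cb^\circ$ having a.s.\ uniformly continuous paths. \emph{Step 3.} Combine finite-dimensional convergence and asymptotic tightness to conclude $\Cb_{n,m}^\circ \weak \Cb^\circ$, and note that the covariance function forces continuity of the sample paths of $\Cb^\circ$ (it is the same covariance as the standard empirical copula limit process, which is known to have continuous versions).

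The main obstacle is \emph{Step 1}, and specifically making the blocking-plus-coupling argument compatible with the triangular-array nature of the problem: because $m=m_n\to\infty$ and the block maxima $\bm M_{m,i}$ are separated in the original time index by only $O(1)$ steps (consecutive blocks are adjacent), one cannot directly insert gaps without discarding data, so one must either (a) delete short runs of length $\ell_n$ at the ends of blocks and show the deleted maxima change $\hat C_{n,m}^\circ$ by $o_p(1/\sqrt k)$ — which is where $(m_n/\ell_n)\,\alpha(\ell_n)=o(1)$ enters — or (b) work with a $\beta$-mixing coupling at the level of the whole vector $(\bm M_{m,1},\dots,\bm M_{m,k})$ versus a block-independent surrogate, quantifying the error by $k\,\beta(m_n + \ell_n)$-type bounds. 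Keeping track of which mixing coefficient ($\ell_n$ versus $m_n$) controls which error term, and verifying that all error terms are genuinely $o_p(1)$ uniformly in $\bm u$, is the delicate bookkeeping that the proof must carry out carefully; everything else (the CLT for the independent array, the VC-tightness) is standard.
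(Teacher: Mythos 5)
Your overall architecture --- clip pieces of length $\ell_n$ off each block, show the clipping changes $\hat C_{n,m}^\circ$ only by $o_p(1/\sqrt k)$, reduce to an independent triangular array, apply a triangular-array CLT and a Theorem-2.11.9-type tightness bound, and obtain the covariance from $C_m \to C_\infty$ --- is the same as the paper's. The genuine gap is in the mechanism you propose for creating independence of the big blocks in the finite-dimensional step. You claim that Condition~\ref{cond:alpha-beta}(iv), $\sqrt{k}\,\beta(m_n)=o(1)$, lets you couple the ($\ell_n$-separated) block maxima to \emph{fully} independent copies with negligible error. Under the stated conditions it does not: if consecutive (clipped) blocks are separated by gaps of length $\ell_n$, an iterated Berbee coupling has per-block failure probability $\beta(\ell_n)$, and Condition~\ref{cond:alpha-beta} controls only $\alpha(\ell_n)$ at that lag (the requirement $(k_n+m_n/\ell_n)\,\beta(\ell_n)=o(1)$ appears in the paper only as a \emph{sufficient} condition, precisely because the authors weakened it); your ``$k\,\beta(m_n+\ell_n)$-type'' bound for a block-independent surrogate is likewise governed by the gap length, not by the block length. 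If instead you couple at separation $m_n$, where $\sqrt k\,\beta(m_n)=o(1)$ does apply, Berbee's lemma only makes the even-indexed blocks i.i.d.\ and the odd-indexed blocks i.i.d.; the two families remain dependent, which is enough for tightness (one splits the coupled process into even and odd sums) but not, by itself, for the joint CLT of the full sum.

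The paper closes exactly this hole by a different device: for the fidis it does not couple at all, but shows, after clipping, that the joint characteristic function of the sum over blocks factorizes asymptotically into the product of the marginal ones, with error of order $k\,\alpha(\ell_n)=o(1)$ by Condition~\ref{cond:alpha-beta}(iii) (via Lemma~3.9 of Dehling and Philipp), so the clipped-block fidis behave as if the blocks were independent; the negligibility of the clipped-off pieces is handled as you describe, through a Lemma~\ref{lem:elm}-type bound using $(m_n/\ell_n)\,\alpha(\ell_n)=o(1)$, plus a variance bound involving $k\,\alpha(m_n)$. Berbee's coupling with $\sqrt k\,\beta(m_n)$ is reserved for the tightness part, together with the even/odd split and Theorem~2.11.9 of van der Vaart and Wellner. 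To complete your plan you must therefore replace ``couple the big blocks to independent copies using $\sqrt k\,\beta(m_n)=o(1)$'' in Step~1 by the characteristic-function factorization under $k\,\alpha(\ell_n)=o(1)$ (or strengthen the hypotheses so that $\beta(\ell_n)$ is controlled), and add the even/odd decomposition when you transfer tightness through the coupling.
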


Interestingly, the limiting process $\Cb^\circ$ is a $C_\infty$-Brownian bridge: the serial dependence between the block maxima has disappeared. The proof of Theorem~\ref{thm:empproc} is given in Appendix~\ref{subsec:proofs:emp}.

\subsection{Block maxima empirical copula process}
\label{subsec:empcop}

Recall the two versions of the empirical copula, $\hat{C}_{n,m}$ in \eqref{eq:empcop} and $\hat{C}_{n,m}^{\mathrm{alt}}$ in \eqref{eq:empcopalt}. By the following proposition, the difference between the two versions is asymptotically negligible. The proofs of Proposition~\ref{prop:empcopalt} and the other results in this section are given in Appendix~\ref{subsec:proofs:empcop}.

\begin{proposition}
\label{prop:empcopalt}
Under the conditions of Theorem~\ref{thm:empproc}, we have
\[
  \sup_{ \bm{u} \in [0, 1]^d }
  \bigl|
    \hat{C}_{n,m}^{\mathrm{alt}}( \bm{u} )
    -
    \hat{C}_{n,m}( \bm{u} )
  \bigr|
  = o_p(1 / \sqrt{k}).
\]
\end{proposition}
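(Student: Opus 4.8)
The plan is to dominate the supremum distance between the two empirical copulas by a quantity measuring the amount of ties among the block maxima, and then to show that this quantity is $o_p(\sqrt{k})$ using that the block maxima are atomless together with the mixing assumptions of Condition~\ref{cond:alpha-beta}.

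\emph{Step 1: a deterministic bound.} Write $\hat{C}_{n,m}(\bm u) = k^{-1} \sum_{i=1}^k \prod_{j=1}^d \1(\hat{U}_{n,m,i,j} \le u_j)$ with $\hat{U}_{n,m,i,j} = k^{-1} \#\{ i' : M_{m,i',j} \le M_{m,i,j}\}$, and $\hat{C}_{n,m}^{\mathrm{alt}}(\bm u) = k^{-1} \sum_{i=1}^k \prod_{j=1}^d \1(M_{m,i,j} \le \hat{F}_{n,m,j}^\leftarrow(u_j))$. For fixed $u_j$, the events $\{\hat{U}_{n,m,i,j} \le u_j\}$ and $\{M_{m,i,j} \le \hat{F}_{n,m,j}^\leftarrow(u_j)\}$ coincide, as $i$ ranges over $\{1,\dots,k\}$, except for those $i$ whose value $M_{m,i,j}$ belongs to the single group of tied values among $M_{m,1,j},\dots,M_{m,k,j}$ whose rank interval contains $\lceil k u_j \rceil$; there are at most $N_{n,j} := \max\{\,|G| : G \text{ a group of tied values in component } j\,\} \ge 1$ such $i$. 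Using $|\prod_j a_j - \prod_j b_j| \le \sum_j |a_j - b_j|$ for $a_j,b_j \in [0,1]$ and taking suprema therefore gives
\[
  \sup_{\bm u \in [0,1]^d} \bigl| \hat{C}_{n,m}^{\mathrm{alt}}(\bm u) - \hat{C}_{n,m}(\bm u) \bigr| \le \frac{1}{k} \sum_{j=1}^d N_{n,j},
\]
which collapses to the familiar bound $d/k$ when the block maxima are distinct. It remains to show $N_{n,j} = o_p(\sqrt{k})$ for each $j \in \{1,\dots,d\}$.

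\emph{Step 2: from tie groups to pair probabilities, and the case of distant blocks.} Let $S_{n,j} = \#\{(i,i') : 1 \le i < i' \le k,\ M_{m,i,j} = M_{m,i',j}\}$. A tie group of size $g$ produces $\binom{g}{2}$ tied pairs, so $N_{n,j} \le 1 + \sqrt{2 S_{n,j}}$, and by Markov's inequality it suffices to prove $\expec[S_{n,j}] = \sum_{1 \le i < i' \le k} \Pr[M_{m,i,j} = M_{m,i',j}] = o(k)$. Since the margins of $\bm X_1$ are continuous, each $F_{m,j}$, and more generally the distribution function of a maximum of finitely many components, is continuous, i.e.\ atomless. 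For $i' \ge i+2$ the blocks $i$ and $i'$ are separated by at least $m$ observations; applying Berbee's coupling lemma to replace $\bm M_{m,i'}$ by an independent copy at total-variation cost at most $\beta((i'-i-1)m)$, together with atomlessness, yields $\Pr[M_{m,i,j} = M_{m,i',j}] \le \beta((i'-i-1)m)$. Summing over all such pairs is $o(k)$ under Condition~\ref{cond:alpha-beta} (here one uses $k_n\beta(m_n) = o(1)$, which holds for instance under the sufficient condition $(k_n + m_n/\ell_n)\beta(\ell_n) = o(1)$ stated after Condition~\ref{cond:alpha-beta}).

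\emph{Step 3: the case of adjacent blocks, and the main obstacle.} For $i' = i+1$ there is no gap, so I would insert the intermediate scale $\ell = \ell_n$: let $\breve{M}_{m,i,j}$ be the maximum of component $j$ over the last $m - \ell$ observations of block $i$. The index sets generating $\breve{M}_{m,i,j}$ and $\breve{M}_{m,i+1,j}$ are then separated by $\ell$, so, as above, $\Pr[\breve{M}_{m,i,j} = \breve{M}_{m,i+1,j}] \le \beta(\ell)$; since $\{M_{m,i,j} = M_{m,i+1,j}\} \subseteq \{\breve{M}_{m,i,j} = \breve{M}_{m,i+1,j}\} \cup \{M_{m,i,j} \ne \breve{M}_{m,i,j}\} \cup \{M_{m,i+1,j} \ne \breve{M}_{m,i+1,j}\}$, stationarity gives $\Pr[M_{m,i,j} = M_{m,i+1,j}] \le \beta(\ell) + 2\Pr[M_{m,1,j} \ne \breve{M}_{m,1,j}]$. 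The main obstacle is to show $\Pr[M_{m_n,1,j} \ne \breve{M}_{m_n,1,j}] \to 0$, i.e.\ that the block maximum is, with probability tending to one, not attained among the first $\ell_n$ of the $m_n$ observations --- a priori serial dependence could make extremes pile up near the block boundary. I would bound $\Pr[M_{m,1,j} \ne \breve{M}_{m,1,j}] = \Pr[\max_{t \le \ell} X_{t,j} \ge \max_{\ell < t \le m} X_{t,j}] \le \Pr[\max_{t \le \ell} X_{t,j} \ge \max_{2\ell < t \le m} X_{t,j}]$, decouple the first $\ell$ observations from the last $m - 2\ell$ at cost $\beta(\ell)$, split the latter block into $\asymp m/\ell$ sub-blocks of length $\ell$, and decouple these at total cost $O((m/\ell)\alpha(\ell))$; comparing one maximum over $\ell$ observations against $\asymp m/\ell$ nearly independent copies of $M_{\ell,1,j}$ then produces a bound of order $\ell/m + (m/\ell)\alpha(\ell) + \beta(\ell) = o(1)$ by Condition~\ref{cond:alpha-beta}(ii)--(iii). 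Hence the adjacent-pair contribution to $\expec[S_{n,j}]$ is $(k-1)\cdot o(1) = o(k)$, and combining with Step~2 completes the proof. The delicate points are precisely this ``no mass near the boundary'' estimate and the bookkeeping of the discarded buffer indices so that the invoked $\sigma$-field separations genuinely hold.
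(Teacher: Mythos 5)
Your Step~1 reduction to the size of the largest tie group is fine and matches the deterministic part of the paper's argument (proof of Lemma~\ref{lem:alt}). The gap is in Step~2: your pair-counting scheme needs $\expec[S_{n,j}]=o(k)$, and the coupling bound for non-adjacent blocks gives $\expec[S_{n,j}]\le k\sum_{g\ge 1}\beta(g m_n)+(\text{adjacent pairs})$, so you effectively need $k_n\,\beta(m_n)=o(1)$ (or summability of $g\mapsto\beta(gm_n)$). Condition~\ref{cond:alpha-beta}(iv) only assumes $\sqrt{k_n}\,\beta(m_n)=o(1)$; the display after Condition~\ref{cond:alpha-beta} that you invoke is explicitly a \emph{sufficient} condition, not part of the hypotheses (the acknowledgments even note that an earlier, stronger version of the condition was deliberately weakened). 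Under the actual assumptions the only available bound $\beta(gm)\le\beta(m)$ yields $\expec[S_{n,j}]\lesssim k^2\beta(m)=o(k^{3/2})$, hence $N_{n,j}=o_p(k^{3/4})$, which is not $o_p(\sqrt{k})$; so the distant-pair step does not close under the stated conditions. A secondary issue of the same kind appears in Step~3: you use $\beta(\ell_n)\to0$, but Condition~\ref{cond:alpha-beta} controls only $\alpha(\ell_n)$ at lag $\ell_n$ (and $\beta$ only at lag $m_n$), and $\beta$ can be much larger than $\alpha$; that piece could be repaired by the discretization bound $\Pr[X=Y]\le 2\sqrt{\alpha(\ell)}$ valid for atomless margins, but no analogous fix rescues Step~2, since even $2\sqrt{\alpha(\mathrm{gap})}$ summed over distant pairs only gives $o(k^{3/2})$.

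The paper avoids counting tied pairs altogether and needs no mixing estimates beyond those already spent in Theorem~\ref{thm:empproc}. In Lemma~\ref{lem:alt}, the discrepancy in component $j$ is bounded by the fraction of observations equal to the empirical quantile, which is at most $\sup_{x\in[0,1]}\{\hat C^\circ_{k,j}(x)-\hat C^\circ_{k,j}(x-1/k)\}\le k^{-1/2}\omega_k(1/k)+1/k$, where $\omega_k$ is the modulus of continuity of $\Cb^\circ_{n,m}=\sqrt{k}(\hat C^\circ_{n,m}-C_m)$ and the $1/k$ term comes from the uniform margins of $C_m$. Asymptotic equicontinuity, which is part of the weak convergence to a continuous limit established in Theorem~\ref{thm:empproc}, gives $\omega_k(1/k)=o_p(1)$ and hence the $o_p(1/\sqrt k)$ rate. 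To salvage your route you would either have to strengthen the hypothesis to $k_n\beta(m_n)=o(1)$, or replace the global tie count by an equicontinuity-of-$\Cb^\circ_{n,m}$ argument of this type, which is exactly what the paper does.
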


It follows that in the definition of the empirical copula process in \eqref{eq:empcopproc}, we can replace $\hat{C}_{n,m}$ by $\hat{C}_{n,m}^{\mathrm{alt}}$, yielding
\[
  \Cb_{n,m}^{\mathrm{alt}} = \sqrt{k} ( \hat{C}_{n,m}^{\mathrm{alt}} - C_m )
\]
at the cost of an $o_p(1)$ term:
\[
  \sup_{ \bm{u} \in [0, 1]^d }
  \bigl|
    \Cb_{n,m}^{\mathrm{alt}}( \bm{u} )
    -
    \Cb_{n,m}( \bm{u} )
  \bigr|
  = o_p(1).  
\]

Now, let us transfer the weak convergence result on $\Cb_{n,m}^\circ$ to $\Cb_{n,m}$. Let $\Dc_\Phi$ denote the set of all cdfs on $[0,1]^d$ whose marginals put no mass at zero. Defining
\begin{equation}
\label{eq:copulaMapping}
  \Phi: \Dc_\Phi \to \ell^\infty([0,1]^d): H \mapsto H(H_1^{\leftarrow}, \dots, H_d^{\leftarrow})
\end{equation}
as the copula mapping, we can write
\[
  \Cb_{n,m}^{\mathrm{alt}}
  = \sqrt{k} \{ \Phi(\hat C_{n,m}^\circ) - \Phi(C_m) \}.
\]
Weak convergence of $\Cb_{n,m}$ and $\Cb_{n,m}^{\mathrm{alt}}$ can be shown by the functional delta method \citep[Section 3.9]{VanWel96}, provided certain smoothness assumptions on the copulas $C_m$ and $C_\infty$ are made, to be introduced next. 

For the limit process to have continuous trajectories, the following condition \citep{segers:2012} is unavoidable and will be assumed throughout.

\begin{condition}
\label{cond:SC}
For any $j=1, \dots, d$, the $j$th first order partial derivative $\dot C_{\infty,j}= \partial C_\infty/\partial u_j $ exists and is continuous on $\{ \bm{u} \in [0,1]^d : u_j \in (0,1) \}$.
\end{condition}

As mentioned right after Condition~\ref{cond:Cinfty}, the $\beta$-mixing condition on the underlying time series in Condition~\ref{cond:alpha-beta} implies that $C_\infty$ is an extreme value copula \citep{Hsi89}. For such copulas, Condition~\ref{cond:SC} has been worked out in Example~5.3 of \cite{segers:2012}, see in particular formula (5.1) therein. In the bivariate case, it is sufficient to assume that the Pickands dependence function is continuously differentiable on $(0,1)$. This is the case for many of the common families of extreme value copulas, as, e.g., the Gumbel, Galambos or H\"usler--Rei\ss\ family.

In addition to Condition~\ref{cond:SC}, some qualification of the convergence of $C_m$ to $C_\infty$ will be needed. We will impose either (a) or (b) of the following condition. Roughly speaking, (a) says that this convergence is sufficiently fast, every subsequence of $\sqrt{k} (C_m - C_\infty)$ containing a further subsequence that converges uniformly, whereas (b) requires locally uniform convergence of the partial derivatives. For $C_m$, these partial derivatives are not supposed to exist, however; instead, we will work with the functions
\begin{equation*}
% \label{eq:dotCmj}
  \dot{C}_{m,j}( \bm{v} )
  =
  \limsup_{h \searrow 0} h^{-1} \{ C_m( \bm{v} + h \bm{e}_j ) - C_m( \bm{v} ) \},
\end{equation*}
with $\bm{e}_j$ the $j$th canonical unit vector in $\reals^d$, functions which are always defined and which satisfy $0 \le \dot{C}_{m,j} \le 1$ as a consequence of monotonicity and  Lipschitz-continuity of $C_{m}$, its margins being standard uniform. Let $\mathcal{C}([0, 1]^d)$ denote the space of all real-valued, continuous functions on $[0, 1]^d$.

\begin{condition}
\label{cond:C12}
\mbox{}
\begin{compactenum}[(a)]
\item
The sequence $\sqrt k (C_m - C_\infty)$ is relatively compact in $\mathcal{C}([0, 1]^d)$.
\item
For every $\delta \in (0, 1/2)$,
\[
  \max_{j=1,\ldots,d} \sup_{\substack{\bm{u} \in [0, 1]^d: \\ u_j \in [\delta, 1-\delta]}}
  \bigl| \dot{C}_{m,j}( \bm{u} ) - \dot{C}_{\infty,j}( \bm{u} ) \bigr|
  \to 0
  \qquad (n \to \infty).
\]
\end{compactenum}
\end{condition}

The partial derivatives $\dot C_{\infty,j}$ are defined as $0$ for $u_j\in\{0,1\}$. For $\bm{u} \in [0, 1]^d$ and $j \in \{1, \ldots, d\}$, write $\bm{u}^{(j)} = (1, \ldots, 1, u_j, 1, \ldots, 1)$, with $u_j$ appearing at the $j$th coordinate.

\begin{theorem}[Block maxima empirical copula process]
\label{thm:empcopproc}
Let $(\bm{X}_t)_{t\in \ZZ}$ be a stationary multivariate time series with continuous univariate margins. Assume Conditions~\ref{cond:Cinfty}, \ref{cond:alpha-beta} and Condition~\ref{cond:SC}. If either Condition~\ref{cond:C12}(a) or (b) is satisfied, then 
\[ 
  \Cb_{n,m} = \Cb_{n,m}^{\mathrm{alt}} + o_p(1) \weak \Cb 
\]
in $\ell^\infty([0,1]^d)$, where, for $\bm u \in [0,1]^d$, 
\[
  \Cb(\bm u) = \Cb^\circ(\bm u) - \sum_{j=1}^d \dot C_{\infty,j} (\bm u) \, \Cb^\circ( \bm{u}^{(j)} ).
\]
\end{theorem}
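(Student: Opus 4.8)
The plan is to deduce Theorem~\ref{thm:empcopproc} from Theorem~\ref{thm:empproc} via the functional delta method applied to the copula mapping $\Phi$ in \eqref{eq:copulaMapping}. By Proposition~\ref{prop:empcopalt} it suffices to prove the weak convergence of $\Cb_{n,m}^{\mathrm{alt}} = \sqrt{k}\{\Phi(\hat C_{n,m}^\circ) - \Phi(C_m)\}$, and since $\Phi(C_m) = C_m$ (as $C_m$ is itself a copula), this is $\sqrt{k}\{\Phi(\hat C_{n,m}^\circ) - \Phi(C_m)\}$ with input process $\Cb_{n,m}^\circ = \sqrt{k}(\hat C_{n,m}^\circ - C_m) \weak \Cb^\circ$ in $\ell^\infty([0,1]^d)$ by Theorem~\ref{thm:empproc}. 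The obstacle to a verbatim application of the classical result is that $\Phi$ must be differentiated not at a \emph{fixed} copula but along the \emph{moving} sequence $C_m$, which only converges to $C_\infty$; so the standard Hadamard differentiability of $\Phi$ at $C_\infty$ is not quite enough, and one needs a version of the delta method for sequences of maps or, equivalently, a ``uniform in a neighbourhood of $C_\infty$'' differentiability statement. This is exactly the kind of refinement carried out by \citet{segers:2012} and \cite{BerBucDet13}, and Condition~\ref{cond:C12}(a) or (b) is tailored to make it go through.

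Concretely, first I would record the exact first-order expansion: for $H$ in a suitable neighbourhood of $C_\infty$ inside $\Dc_\Phi$ and for $\bm u \in [0,1]^d$,
\[
  \Phi(H)(\bm u) - C_\infty(\bm u)
  = \{H(\bm u) - C_\infty(\bm u)\}
    - \sum_{j=1}^d \dot C_{\infty,j}(\bm u)\,\{H(\bm u^{(j)}) - C_\infty(\bm u^{(j)})\}
    + \text{remainder},
\]
which is the linearization underlying the limit $\Cb(\bm u) = \Cb^\circ(\bm u) - \sum_{j=1}^d \dot C_{\infty,j}(\bm u)\,\Cb^\circ(\bm u^{(j)})$. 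Applying this with $H = \hat C_{n,m}^\circ$ and subtracting the same identity with $H = C_m$ (whose exact partial-derivative-type functions are $\dot C_{m,j}$), I would multiply by $\sqrt{k}$ and collect terms. The ``linear part'' is $\sqrt{k}(\hat C_{n,m}^\circ - C_m)$ evaluated at $\bm u$ and at the points $\bm u^{(j)}$, weighted by $\dot C_{\infty,j}$; by Theorem~\ref{thm:empproc} and the continuous mapping theorem this converges weakly to $\Cb(\bm u)$ as stated, using that $\dot C_{\infty,j}$ is bounded and continuous on the relevant set thanks to Condition~\ref{cond:SC}. What remains is to show that every other term — the difference between using $\dot C_{m,j}$ and $\dot C_{\infty,j}$ as weights, and the Taylor remainders for both $\hat C_{n,m}^\circ$ and $C_m$ — is $o_p(1)$ uniformly in $\bm u$.

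This uniform-negligibility step is where Condition~\ref{cond:C12} enters and is the main obstacle. Under (b), the weight-mismatch term $\sum_j \{\dot C_{m,j}(\bm u) - \dot C_{\infty,j}(\bm u)\}\,\sqrt{k}(\hat C_{n,m}^\circ - C_m)(\bm u^{(j)})$ is handled by splitting $[0,1]^d$ into a region where some $u_j$ is within $\delta$ of $\{0,1\}$ (where the empirical increments at $\bm u^{(j)}$ are small because $\hat C_{n,m}^\circ$ and $C_m$ share near-degenerate behaviour, uniformly, controlled by a modulus-of-continuity argument on the Brownian-bridge limit as $\delta\to0$) and its complement (where Condition~\ref{cond:C12}(b) forces the weight difference to $0$ uniformly); the Taylor remainders are treated by the same segers:2012-style second-order estimates. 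Under (a), instead, one writes $\sqrt{k}(\hat C_{n,m}^\circ - C_m) = \Cb_{n,m}^\circ$, which converges, and $\sqrt{k}(C_m - C_\infty)$ is relatively compact, so $\sqrt{k}(\hat C_{n,m}^\circ - C_\infty)$ is asymptotically tight; then one applies the Hadamard differentiability of $\Phi$ at $C_\infty$ (tangentially to $\Cc([0,1]^d)$, which holds by \citet{segers:2012} under Condition~\ref{cond:SC}) directly to the input $\sqrt{k}(\hat C_{n,m}^\circ - C_\infty) \weak \Cb^\circ + G$ along subsequences, where $G$ is a subsequential limit of $\sqrt{k}(C_m-C_\infty)$, and subtracts off the deterministic part $\sqrt{k}(\Phi(C_m) - C_\infty)$; the extra $G$-contributions cancel because $\Phi$'s derivative annihilates the image of a copula-difference correction in the appropriate sense. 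Either way, the final output is $\Cb_{n,m}^{\mathrm{alt}} \weak \Cb$, and combining with Proposition~\ref{prop:empcopalt} gives $\Cb_{n,m} = \Cb_{n,m}^{\mathrm{alt}} + o_p(1) \weak \Cb$, completing the proof; continuity of the sample paths of $\Cb$ is inherited from that of $\Cb^\circ$ (Theorem~\ref{thm:empproc}) together with continuity of $\dot C_{\infty,j}$.
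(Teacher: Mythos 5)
Your overall architecture coincides with the paper's: reduce to $\Cb_{n,m}^{\mathrm{alt}}$ via Proposition~\ref{prop:empcopalt}, feed $\Cb_{n,m}^\circ\weak\Cb^\circ$ from Theorem~\ref{thm:empproc} into a delta-method-type argument for the copula map $\Phi$, and use Condition~\ref{cond:C12} to cope with the fact that the centering $C_m$ moves. Your treatment of case (a) is a legitimate alternative route to the paper's: the paper verifies $g_n(\alpha_n)\to g(\alpha)$ for $g_n(\alpha)=\sqrt k\{\Phi(C_m+k^{-1/2}\alpha)-\Phi(C_m)\}$ and invokes the extended continuous mapping theorem, handling (a) through Arzel\`a--Ascoli equicontinuity of $\Delta_n=\sqrt k(C_m-C_\infty)$, whereas you pass to subsequences, apply Hadamard differentiability of $\Phi$ at the \emph{fixed} $C_\infty$ to the input $\Cb^\circ+G$, and subtract the drift. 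Two caveats: the differentiability you need is the map-level statement of \cite{BucVol13}-type (tangentially to continuous functions), not anything proved in \cite{segers:2012}, which gives the process-level result only; and the ``$G$-contributions cancel'' not because the derivative annihilates $G$ but because $G$, being a uniform limit of differences of copulas, satisfies $G(\bm u^{(j)})=0$, so $\Phi_{C_\infty}'(G)=G$ and it cancels against the subtracted deterministic term $\sqrt k(\Phi(C_m)-C_\infty)\to G$ along the subsequence.

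Case (b) is where your sketch has a genuine gap. Under (b) no rate for $C_m\to C_\infty$ is assumed, so $\hat C_{n,m}^\circ-C_\infty$ need not be $O_p(k^{-1/2})$ and differentiability at the fixed $C_\infty$ --- and with it any ``\cite{segers:2012}-style second-order estimate'' --- is unavailable; the expansion must be carried out around $C_m$ itself, which is not assumed differentiable: only the outer upper partial derivatives $\dot C_{m,j}$ (defined via a limsup) exist, so the ``Taylor remainders for both $\hat C_{n,m}^\circ$ and $C_m$'' that you invoke have no available control. The paper's proof supplies exactly the two devices your sketch omits. First, Vervaat's lemma (formula~(4.2) in \cite{BucVol13}) turns the marginal convergence into convergence of the inverted marginals $I_{nj}$ appearing inside $\Phi$, namely $\sup_{u_j}\bigl|\sqrt k\{I_{nj}(u_j)-u_j\}+\alpha(\bm u^{(j)})\bigr|\to 0$; your proposal never engages with the generalized inverses at all, and your weight-mismatch term pairs $\dot C_{m,j}-\dot C_{\infty,j}$ with the empirical process at $\bm u^{(j)}$ rather than with this inverse marginal process. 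Second, and crucially, the increment of $C_m$ is represented \emph{exactly}, with no remainder, by absolute continuity of $s\mapsto C_m(\bm v_n(\bm u,s))$ along the segment $\bm v_n(\bm u,s)=(1-s)\bm u+s\bm I_n(\bm u)$, giving $C_m(\bm I_n(\bm u))-C_m(\bm u)=\int_0^1\sum_{j=1}^d\dot C_{m,j}(\bm v_n(\bm u,s))\{I_{nj}(u_j)-u_j\}\,\ds$. Only with this identity in hand does the split you describe work: Condition~\ref{cond:C12}(b) together with continuity of $\dot C_{\infty,j}$ handles $u_j\in[\delta,1-\delta]$, and the bounds $0\le\dot C_{m,j},\dot C_{\infty,j}\le1$ together with $\alpha\in\Dc_0$ handle the boundary strips. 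Without that exact representation, the (b)-case of your argument does not go through.
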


In Theorem~\ref{thm:empcopproc}, the empirical copula process was defined by centering around $C_m$. Of course, one may also want to center around the limit, $C_\infty$.

\begin{corollary}[Centering by the limit copula]
\label{cor:empcopproc}
Let $(\bm{X}_t)_{t\in \ZZ}$ be a stationary multivariate time series with continuous univariate margins. Assume Conditions~\ref{cond:Cinfty}, \ref{cond:alpha-beta} and Condition~\ref{cond:SC}. If also
\begin{equation}
\label{eq:Gamma}
  \lim_{n \to \infty} \sqrt{k} ( C_m - C_\infty ) = \Gamma \qquad \text{in $\ell^\infty([0, 1]^d)$},
\end{equation}
then, in $\ell^\infty([0, 1]^d)$ and with $\Cb$ as in Theorem~\ref{thm:empcopproc},
\[
  \sqrt{k} ( \hat{C}_{n,m} - C_\infty ) 
  =
  \sqrt{k} ( \hat{C}_{n,m}^{\mathrm{alt}} - C_\infty ) + o_p(1)
  \weak 
  \Cb + \Gamma.
\]
\end{corollary}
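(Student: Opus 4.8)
The plan is to deduce the corollary from Theorem~\ref{thm:empcopproc} by an elementary additive decomposition combined with Slutsky's lemma. As a preliminary step I would check that hypothesis~\eqref{eq:Gamma} is in fact stronger than Condition~\ref{cond:C12}(a), so that Theorem~\ref{thm:empcopproc} is applicable: each map $\sqrt{k}(C_m - C_\infty)$ is continuous on $[0,1]^d$, and a uniform limit of continuous functions is continuous, so $\Gamma \in \mathcal{C}([0,1]^d)$ and $\sqrt{k}(C_m - C_\infty) \to \Gamma$ in $\mathcal{C}([0,1]^d)$; a convergent sequence in that space is relatively compact. Hence, under the stated assumptions, Theorem~\ref{thm:empcopproc} yields both $\Cb_{n,m} = \Cb_{n,m}^{\mathrm{alt}} + o_p(1)$ uniformly on $[0,1]^d$ and $\Cb_{n,m} \weak \Cb$ in $\ell^\infty([0,1]^d)$.

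Next I would write, as an identity in $\ell^\infty([0,1]^d)$,
\[
  \sqrt{k}( \hat{C}_{n,m} - C_\infty )
  = \Cb_{n,m} + \sqrt{k}( C_m - C_\infty ),
\]
and likewise with $\hat{C}_{n,m}^{\mathrm{alt}}$ and $\Cb_{n,m}^{\mathrm{alt}}$ in place of $\hat{C}_{n,m}$ and $\Cb_{n,m}$. The second summand is a deterministic sequence in $\ell^\infty([0,1]^d)$ converging, by~\eqref{eq:Gamma}, to the fixed element $\Gamma$. Since the first summand converges weakly to the tight limit $\Cb$ (whose law is concentrated on the separable subspace $\mathcal{C}([0,1]^d)$), adding a deterministic sequence that converges in the supremum norm does not affect the weak limit; this is Slutsky's lemma in the form valid for $\ell^\infty$-valued maps and follows from the (extended) continuous mapping theorem, see, e.g., \citet{VanWel96}. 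Consequently $\sqrt{k}( \hat{C}_{n,m} - C_\infty ) \weak \Cb + \Gamma$, and the same conclusion holds with $\hat{C}_{n,m}^{\mathrm{alt}}$.

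Finally, subtracting the two decompositions gives
\[
  \sqrt{k}( \hat{C}_{n,m} - C_\infty ) - \sqrt{k}( \hat{C}_{n,m}^{\mathrm{alt}} - C_\infty )
  = \Cb_{n,m} - \Cb_{n,m}^{\mathrm{alt}} = o_p(1)
\]
uniformly on $[0,1]^d$, which is exactly the middle equality in the assertion. There is no substantial obstacle in this argument; the only points demanding a little care are verifying that~\eqref{eq:Gamma} entails Condition~\ref{cond:C12}(a), so that Theorem~\ref{thm:empcopproc} is in force, and invoking Slutsky's lemma in the correct form for the non-separable space $\ell^\infty([0,1]^d)$, which is legitimate precisely because the weak limit $\Cb$ is tight and $\Gamma$ is a fixed (hence trivially tight) element of that space.
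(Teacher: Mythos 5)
Your proposal is correct and follows essentially the same route as the paper's own proof: observe that the convergence in \eqref{eq:Gamma} implies relative compactness, hence Condition~\ref{cond:C12}(a), then combine the decomposition $\sqrt{k}(\hat{C}_{n,m} - C_\infty) = \Cb_{n,m} + \sqrt{k}(C_m - C_\infty)$ with Theorem~\ref{thm:empcopproc} and Slutsky's lemma. The extra care you take with the continuity of $\Gamma$ and the validity of Slutsky's lemma in $\ell^\infty([0,1]^d)$ is sound but not a different argument.
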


Note that the limit $\Gamma$ in~\eqref{eq:Gamma} is continuous, being the uniform limit of a sequence of continuous functions. In Section~\ref{subsec:OPC} below, we work out an example for which Equation~\eqref{eq:Gamma} is satisfied with a non-trivial limit function $\Gamma$.

\subsection{Estimating the Pickands dependence function}
\label{subsec:minidi}

For strongly mixing sequences, the limit copula $C_\infty$ is an extreme value copula \citep[Theorem~4.2]{Hsi89}. Inference on the Pickands dependence function $A_\infty$ in \eqref{eq:Pickands} can then be based on the empirical copula $\hat{C}_{n,m}$ of the block maxima.

Rank-based inference for the Pickands dependence function based on i.i.d.\ samples whose underlying distribution has an extreme value copula has drawn some attention recently \citep{GenSeg09, BucDetVol11, GudSeg12, BerBucDet13, peng:qian:yang:2013}. What the estimators have in common is that they can all be written as weighted integrals with respect to the empirical copula. The asymptotic behavior of all these estimators can then be derived from the weak convergence of the usual empirical copula process. In the following we will exemplarily extend the results on the minimum-distance estimator to the present setting of estimation from block maxima.

For the definition of the estimator, note that,
for any probability density $p$ on $(0,1)$ such that the following integral exists, we have
\[
  A_\infty(\bm t) = \int_0^1 \log \{ C_\infty(y^{\bm{t}}) \} \frac{ p(y)}{\log(y)} \, dy,
\]
where we used the notation $y^{ \bm t}=(y^{1-t_1-\dotsc-t_{d-1}}, y^{t_1},\dotsc,y^{t_{d-1}})$.
The last display suggests to estimate $A_\infty$ by the sample analogue
\begin{align}
\label{eq:minidi}
  \widehat{A}_{n,m} \left( \bm t \right) 
  = 
  \int_0^1
  \log\{ \tilde{C}_{n,m} ( y^{\bm{t}} ) \} \frac{ p(y)}{\log(y)} \, dy,
\end{align}  
where $\tilde C_{n,m} = \max\{ k^{-\gamma}, \hat C_{n,m}(\bm u) \}$ with some $\gamma >1/2$ to be specified later; the latter modification is needed to avoid the logarithm of zero. For the case of i.i.d.\ samples, the estimator in \eqref{eq:minidi} is exactly as defined in \cite{BucDetVol11, BerBucDet13}, where it is motivated as a minimum distance estimator.

\begin{theorem}[Asymptotic normality]
\label{thm:minidi}
Let $(\bm{X}_t)_{t\in \ZZ}$ be a stationary multivariate time series with continuous univariate margins. Suppose that Condition~\ref{cond:Cinfty}, \ref{cond:alpha-beta} and~\ref{cond:SC} are met and that $\sqrt k(C_m - C_\infty) \to \Gamma$, uniformly.
If the weight function $p:(0,1) \to [0,\infty)$ satisfies
\begin{align}
\label{eq:wf}
  \int_0^1 y^{-\lambda}\frac{p(y)}{|\log (y)|} \,dy < \infty \text{  for some } \lambda>1,
\end{align}
then, for any $\gamma \in (\tfrac{1}{2}, \tfrac{\lambda}{2})$, in the space $\ell^\infty(\Delta_{d-1})$ equipped with the supremum distance,
\begin{align*}
  \Ab_{n} = \sqrt{k} ( \widehat{A}_{n,m} - A_{\infty} ) &\weak \Ab_\infty,
\end{align*}
where the limiting process $\Ab_\infty$ on $\Delta_{d-1}$ can be represented as
\[
   \Ab_{\infty} (\bm t )
  = 
  \int_0^1 \frac{\Cb (y^{ \bm t} ) + \Gamma(y^{ \bm t})}
  {C_\infty ( y^{ \bm t} ) } \frac{p (y)}{\log(y)} \, dy .
\]
\end{theorem}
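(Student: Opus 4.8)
The plan is to use the functional delta method applied to the continuous linear map
\[
  \Psi \colon \ell^\infty([0,1]^d) \to \ell^\infty(\Delta_{d-1}), \qquad
  (\Psi f)(\bm t) = \int_0^1 \frac{f(y^{\bm t})}{C_\infty(y^{\bm t})} \, \frac{p(y)}{\log(y)} \, dy,
\]
pre-composed with the logarithm of the empirical copula and exploiting the already-established weak convergence $\sqrt k(\hat C_{n,m} - C_\infty) \weak \Cb + \Gamma$ from Corollary~\ref{cor:empcopproc} (which applies since we assume $\sqrt k(C_m - C_\infty) \to \Gamma$ uniformly). First I would argue that the truncation is asymptotically irrelevant: since $\gamma < \lambda/2$ and the weight function satisfies~\eqref{eq:wf}, one shows that with probability tending to one, $\tilde C_{n,m}(y^{\bm t}) = \hat C_{n,m}(y^{\bm t})$ for all $\bm t$ and all $y$ outside a shrinking neighbourhood of $0$, and that the contribution of the truncated region to the integral is $o_p(1/\sqrt k)$ uniformly in $\bm t$; this is where the range $\gamma \in (1/2, \lambda/2)$ is used, balancing the $k^{-\gamma}$ floor against the integrability exponent $\lambda$. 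Thus one may work with $\log \hat C_{n,m}$ directly up to negligible error.

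Next I would linearize $\log$: writing $\hat C_{n,m}(y^{\bm t}) = C_\infty(y^{\bm t}) + k^{-1/2} \{\Cb_{n,m}(y^{\bm t}) + \sqrt k(C_m - C_\infty)(y^{\bm t})\}$ and using that $C_\infty(y^{\bm t})$ is bounded below away from $0$ on compact subsets of $y \in (0,1)$, a Taylor expansion of the logarithm gives
\[
  \sqrt k \, \log\{\hat C_{n,m}(y^{\bm t})\} - \sqrt k \, \log\{C_\infty(y^{\bm t})\}
  = \frac{\Cb_{n,m}(y^{\bm t}) + \sqrt k (C_m - C_\infty)(y^{\bm t})}{C_\infty(y^{\bm t})} + R_{n}(y, \bm t),
\]
where the remainder $R_n$ is quadratic in the fluctuation and hence $o_p(1)$ after integration against $p(y)/|\log y|$, again provided the near-zero region is controlled by the moment condition~\eqref{eq:wf} together with a uniform bound on $\hat C_{n,m}$ (e.g. $\hat C_{n,m}(\bm u) \le u_j$ for every $j$, so $\hat C_{n,m}(y^{\bm t}) \le y^{t_j}$, giving a polynomial-in-$y$ majorant). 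Integrating the linear term against the weight and using $A_\infty(\bm t) = \int_0^1 \log\{C_\infty(y^{\bm t})\}\, p(y)/\log(y)\, dy$ then yields
\[
  \Ab_n(\bm t) = \int_0^1 \frac{\Cb_{n,m}(y^{\bm t}) + \sqrt k (C_m - C_\infty)(y^{\bm t})}{C_\infty(y^{\bm t})} \, \frac{p(y)}{\log(y)} \, dy + o_p(1),
\]
uniformly in $\bm t \in \Delta_{d-1}$.

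Finally I would invoke the continuous mapping theorem: the map $g \mapsto \Psi g$ is a bounded linear operator from $\ell^\infty([0,1]^d)$ (or from $\Cc([0,1]^d)$, which suffices since $\Cb + \Gamma$ has continuous paths) to $\ell^\infty(\Delta_{d-1})$, its boundedness being exactly the statement that $\int_0^1 \sup_{\bm t}|g(y^{\bm t})| \, |C_\infty(y^{\bm t})|^{-1} p(y)/|\log y| \, dy$ is controlled by $\|g\|_\infty$ times the finite constant $\int_0^1 y^{-\lambda} p(y)/|\log y|\, dy$ — here one uses $C_\infty(y^{\bm t}) \ge y^{A_\infty(\bm t)} \ge y$ — so continuity is immediate. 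Applying this operator to $\Cb_{n,m} + \sqrt k(C_m - C_\infty) \weak \Cb + \Gamma$ (joint weak convergence in $\ell^\infty([0,1]^d)$, which is what Corollary~\ref{cor:empcopproc} delivers) gives the claimed limit $\Ab_\infty(\bm t) = \int_0^1 \{\Cb(y^{\bm t}) + \Gamma(y^{\bm t})\}\, C_\infty(y^{\bm t})^{-1}\, p(y)/\log(y)\, dy$. The main obstacle I anticipate is the uniform-in-$\bm t$ control of the near-zero part of the integral in both the truncation step and the linearization remainder: one must show that the small values of $\hat C_{n,m}(y^{\bm t})$ — which can genuinely be as small as $k^{-\gamma}$ because of the truncation — do not blow up when divided by $C_\infty(y^{\bm t}) \approx y^{A_\infty(\bm t)}$ and integrated, and this is precisely where the interplay between $\gamma > 1/2$, the exponent $\lambda > 1$ in~\eqref{eq:wf}, and the pointwise bound $\hat C_{n,m}(y^{\bm t}) \le \min_j y^{t_j}$ must be handled carefully and uniformly over the simplex, including near its vertices where some $t_j \to 0$.
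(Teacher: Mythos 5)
Your proposal is correct and follows essentially the same route as the paper: both arguments rest on the weak convergence $\sqrt{k}(\hat C_{n,m}-C_\infty)\weak \Cb+\Gamma$ from Corollary~\ref{cor:empcopproc} and then handle the log-integral functional, with the region near $y=0$ controlled through the interplay of $\gamma\in(\tfrac12,\tfrac{\lambda}{2})$, the weight condition~\eqref{eq:wf}, and the bounds $C_\infty(y^{\bm t})=y^{A_\infty(\bm t)}\ge y$. The paper merely packages these steps differently, proving the slightly more general Theorem~\ref{theo:winf} via the truncation-at-$1/q$ scheme of Lemma~B.1 in \cite{BucDetVol11} and referring to \cite{BerBucDet13} for the truncation and linearization details that you spell out directly.
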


In fact, in Appendix~\ref{subsec:proofs:minidi} and at no additional cost, we will show a more general result that allows for weight functions inside the integral in \eqref{eq:minidi} that may also depend on $\bm t$, see Theorem~\ref{theo:winf}. In the i.i.d.~case in \cite{BerBucDet13}, this result proved useful for the development of a test for extreme value dependence.

A useful class of weight functions is given by $p_\kappa(y) = (\kappa+1)^2 \times y^\kappa  \times |\log(y)|$ for some $\kappa > 0$, see Example~2.5 in \cite{BucDetVol11}. Condition \eqref{eq:wf} is obviously satisfied for any $\kappa>0$.

As it is the case for most of the available estimators for Pickands dependence functions, $\widehat{A}_{n,m}$ is itself not a Pickands dependence function. A unifying approach to enforce the necessary and sufficient shape constraints has been proposed in \cite{FilGuiSeg08} and \cite{GudSeg12}. A simple additive boundary correction will be employed in the simulation Section~\ref{sec:num}, see formula~\eqref{eq:abc}.

\section{Examples}
\label{sec:examples}

This section is devoted to the verification of Conditions~\ref{cond:Cinfty}, \ref{cond:alpha-beta} and \ref{cond:C12} in specific models. Regarding Condition~\ref{cond:SC}, please see the paragraph right after the statement of that condition.

With respect to Condition~\ref{cond:Cinfty} note that, for multivariate Gaussian time series whose cross-correlation function satisfies a certain summability condition, \citet{amram:1985} and \citet{Hsi89} show that the limit $C_\infty$ is the independence copula. For most of the common time series models, however, it is already hard to obtain convenient expressions for the copula $C_1$ of the stationary distribution, let alone for the one of the block maximum distribution, $C_m$, and for the limit $C_\infty$. Sections~\ref{subsec:MM} and \ref{subsec:RR} deal with two particular examples where Conditions~\ref{cond:Cinfty} and \ref{cond:alpha-beta} are satisfied.

Section~\ref{subsec:OPC} investigates Condition~\ref{cond:C12}~(a), and in particular its strengthening in equation~\eqref{eq:Gamma}, in a special i.i.d.\ situation. 

\subsection{Moving maxima}
\label{subsec:MM}

Consider the discrete-time, $d$-variate moving maxima process $(\bm{U}_t)_{t \in \ZZ}$ of order $p \in \NN = \{1, 2, \ldots\}$ given by
\begin{equation}
\label{eq:Utj}
  U_{tj} = \max_{i = 0, \ldots, p} W_{t-i,j}^{1/a_{ij}} \qquad (t \in \ZZ;\ j = 1, \ldots, d).
\end{equation}
Here $(\bm{W}_s)_{s \in \ZZ}$ is an iid sequence in $(0, 1)^d$, the $d$-variate distribution of $\bm{W}_s$ being the copula $D$. Further, the coefficients $a_{ij}$ ($i = 0, \ldots, p$; $j = 1, \ldots, d$) are nonnegative and satisfy the constraints
\begin{equation}
\label{eq:sumiaij}
  \sum_{i=0}^p a_{ij} = 1 \qquad (j = 1, \ldots, d).
\end{equation}
If $a = 0$ and $w \in (0,1)$, then $w^{1/a} = 0$ by convention. As the notation suggests, the random variables $U_{tj}$ are uniformly distributed on $(0, 1)$. A model with arbitrary continuous margins can be considered by defining $X_{tj} = \eta_j(U_{tj})$, where $\eta_1, \ldots, \eta_d$ are strictly increasing functions from $(0, 1)$ into $\reals$.

Since $\sigma(\bm{U}_t : t \le 0)$ and $\sigma(\bm{U}_t : t \ge p+1)$ are independent, Condition~\ref{cond:alpha-beta} (iii) and (iv) are trivially satisfied. 
%{$n^{1/3} \ll m \ll n$; any $\ell \to \infty$ such that $n \, \ell^2 = o(m^3)$ works. Here, for positive sequences $a_n, b_n$ converging to infinity, the notation $a_n \ll b_n$ means that $a_n = o(b_n)$.}

%\ab{Now, the conditions reduce to (i) and (ii).}

Let $C_m$ be the copula of the vector of component-wise maxima $\bm{M}_m = (M_{m,1}, \ldots, M_{m,d})$ given by $M_{m,j} = \max(U_{1j}, \ldots, U_{mj})$ for $j \in \{1, \ldots, d\}$.

% It is the aim of this section to investigate the limit behaviour, as $m \to \infty$, of the copula, $C_m$, of $\bm{M}_m$. In particular, we want to find the limit, $C_\infty = \lim_{m \to \infty} C_m$ and we want to qualify the convergence of $C_m$ to $C_\infty$.

% \js{The extremal indices of the $d$ component sequences are easily seen to be $\theta_j = \max \{ a_{ij} : i = 0, \ldots, k \}$, but I think this is of little interest to our paper.}

For $m \in \NN$, consider the copula, $D_m$, of the vector of componentwise maxima of $m$ independent random vectors with common distribution $D$:
\begin{equation*}
% \label{eq:Dn}
  D_m( \bm{u} ) = \bigl( D(u_1^{1/m}, \ldots, u_d^{1/m}) \bigr)^{m}.
\end{equation*}
We say that $D$ is in the copula domain of attraction of the extreme value copula $D_\infty$ if
\begin{equation}
\label{eq:CDA}
  \lim_{m \to \infty} D_m(\bm{u}) = D_\infty(\bm{u})
  \qquad (\bm{u} \in (0, 1]^d).
\end{equation}
The limit, $D_\infty$, of $C_m$ is in general different from the copula extreme value attractor of $C_1$; see \eqref{eq:C1attractorMM}.

\begin{proposition}
\label{prop:MM:Cm:limit}
Consider the moving maximum process in \eqref{eq:Utj}--\eqref{eq:sumiaij}. If \eqref{eq:CDA} holds, then
\begin{equation}
\label{eq:MM:Cmlimit}
  \lim_{m \to \infty} C_m(\bm{u}) = D_\infty(\bm{u}).
\end{equation}
\end{proposition}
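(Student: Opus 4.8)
The plan is to obtain an exact closed form for $C_m$ from the max-linear structure together with the mutual independence of the innovation vectors $\bm W_s$, and then to let $m\to\infty$ with the help of the domain-of-attraction condition~\eqref{eq:CDA}. First I would reorganize the block maximum: since $w\mapsto w^{1/a}$ is nondecreasing in $a\in[0,1]$ for $w\in(0,1)$ (with the convention $w^{1/0}:=0$), collecting for each $s\in\{1-p,\dots,m\}$ the coefficients $a_{ij}$ for which $i\in\{0,\dots,p\}$ and $s+i\in\{1,\dots,m\}$ and letting $b_{s,j}$ be their maximum, one gets $M_{m,j}=\max_{s=1-p}^m W_{s,j}^{1/b_{s,j}}$. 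Because each $W_{s,j}$ is uniform on $(0,1)$ and the vectors $\bm W_s$ are independent, this immediately yields $F_{m,j}(x)=x^{\beta_{m,j}}$ with $\beta_{m,j}=\sum_{s=1-p}^m b_{s,j}$, hence $F_{m,j}^\leftarrow(u)=u^{1/\beta_{m,j}}$, and $F_m(\bm x)=\prod_{s=1-p}^m D\bigl((x_j^{b_{s,j}})_j\bigr)$; evaluating $F_m$ at the marginal quantile functions (the margins being continuous) gives the key identity
\[
  C_m(\bm u)=\prod_{s=1-p}^m D\bigl((u_j^{b_{s,j}/\beta_{m,j}})_j\bigr),\qquad \bm u\in[0,1]^d.
\]

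Next I would separate interior from edge indices. For $s\in\{1,\dots,m-p\}$ the relevant coefficient set is all of $\{0,\dots,p\}$, so $b_{s,j}=a_j^*:=\max_{i=0,\dots,p}a_{ij}$, which is strictly positive by~\eqref{eq:sumiaij}; the remaining $2p$ values of $s$ contribute factors whose $b_{s,j}$ do not depend on $m$. Hence $\beta_{m,j}=(m-p)\,a_j^*+O(1)$, so that $\beta_{m,j}\to\infty$ and $a_j^*(m-p)/\beta_{m,j}\to 1$, and the product factors as $C_m(\bm u)=A_m(\bm u)\,B_m(\bm u)$, where $A_m(\bm u)=D\bigl((u_j^{a_j^*/\beta_{m,j}})_j\bigr)^{m-p}$ is the interior part and $B_m(\bm u)$ is the product of the $2p$ edge factors. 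Fixing $\bm u\in(0,1]^d$ — the points with some $u_j=0$ being trivial, since there $C_m(\bm u)=D_\infty(\bm u)=0$ — each edge factor has arguments $u_j^{b_{s,j}/\beta_{m,j}}\to 1$, so continuity of the copula $D$ at $\bm 1$ gives $B_m(\bm u)\to 1$.

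The remaining, and only delicate, point is $A_m(\bm u)\to D_\infty(\bm u)$. Writing $A_m(\bm u)=D\bigl((v_{m,j}^{1/(m-p)})_j\bigr)^{m-p}$ with $v_{m,j}=u_j^{a_j^*(m-p)/\beta_{m,j}}\to u_j$, one cannot invoke~\eqref{eq:CDA} directly because the arguments $v_{m,j}$ drift with $m$. I would handle this by a two-sided bound: for each $\eps\in(0,1)$ and each coordinate with $u_j\in(0,1)$ one has $u_j^{1+\eps}<u_j<u_j^{1-\eps}$, hence $v_{m,j}\in(u_j^{1+\eps},u_j^{1-\eps})$ for all large $m$ (coordinates with $u_j=1$ having $v_{m,j}=1$ throughout), so that by monotonicity of $D$,
\[
  D\bigl(((u_j^{1+\eps})^{1/(m-p)})_j\bigr)^{m-p}\le A_m(\bm u)\le D\bigl(((u_j^{1-\eps})^{1/(m-p)})_j\bigr)^{m-p}
\]
for such $m$. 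The two bounding sequences are of the exact form appearing in~\eqref{eq:CDA}, evaluated at the fixed points $\bm u^{1\pm\eps}\in(0,1]^d$, and hence converge to $D_\infty(\bm u^{1+\eps})$ and $D_\infty(\bm u^{1-\eps})$ respectively; since $D_\infty$ is an extreme value copula it is max-stable, so $D_\infty(\bm u^{c})=D_\infty(\bm u)^{c}$ for $c>0$, and letting $\eps\downarrow 0$ closes the squeeze at $D_\infty(\bm u)$. Combining this with $B_m(\bm u)\to 1$ yields $C_m(\bm u)=A_m(\bm u)B_m(\bm u)\to D_\infty(\bm u)$, which is~\eqref{eq:MM:Cmlimit}. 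In this scheme the bookkeeping of the index sets and the exact expression for $\beta_{m,j}$ are routine; the genuine obstacle is the passage from the pointwise limit~\eqref{eq:CDA} to its perturbed-argument version, and max-stability of $D_\infty$ is precisely what makes the two-sided bound collapse as $\eps\downarrow 0$.
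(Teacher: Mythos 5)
Your proposal is correct, and its first half coincides with the paper's: both derive the exact product representation $C_m(\bm u)=\prod_{s=1-p}^{m} D\bigl((u_j^{\beta_{mjs}})_j\bigr)$ with $\beta_{mjs}=\alpha_{mjs}/\alpha_{mj\bullet}$ from the max-linear structure and the independence of the $\bm W_s$. The limit argument, however, is carried out differently. The paper bounds \emph{all} normalized exponents at once, $1/(m+p)\le\beta_{mjs}\le 1/(m-p)$ (the upper bound for every $s$, the lower bound for the $m-p$ interior indices), and uses monotonicity of $D$ together with the trivial bound $D\le 1$ to get the sandwich $\bigl(D_{m-p}(\bm u)\bigr)^{(m+p)/(m-p)}\le C_m(\bm u)\le\bigl(D_{m+p}(\bm u)\bigr)^{(m-p)/(m+p)}$, so that \eqref{eq:CDA} is invoked at the \emph{fixed} point $\bm u$ and only an outer exponent tending to one remains; no property of $D_\infty$ beyond being the limit is needed, there is no auxiliary $\eps$, and this sandwich is also the form the authors refine to obtain rates of convergence. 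You instead split off the $2p$ edge factors (correctly shown to tend to $1$), reduce the interior part to $D_{m-p}(\bm v_m)$ with drifting arguments $\bm v_m\to\bm u$, and absorb the drift into the argument via the $\eps$-squeeze at $\bm u^{1\pm\eps}$, closing it with max-stability (continuity of the copula $D_\infty$ would already suffice there). Your bookkeeping ($b_{s,j}=a_j^*>0$ on the interior, $\beta_{m,j}=(m-p)a_j^*+O(1)$, the monotonicity directions in the squeeze, and the trivial case of a zero coordinate) is accurate, so the proof goes through; the trade-off is that your route makes the dominance of the interior factors explicit at the cost of the extra limiting parameter $\eps$ and an appeal to max-stability, while the paper's single sandwich is shorter and more readily quantifiable.
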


The proof of Proposition~\ref{prop:MM:Cm:limit} is given in Section~\ref{subsec:proofs:MM}. By a refinement of the proof of Proposition~\ref{prop:MM:Cm:limit}, it is actually also possible to derive rates of convergence in \eqref{eq:MM:Cmlimit} given a rate of convergence in \eqref{eq:CDA}. For the sake of brevity, we omit the details.

 \subsection{Random repetition}
\label{subsec:RR}

Consider independent and identically distributed $d$-dimensional random vectors $\bm{X}_0, \bm{\xi}_1, \bm{\xi}_2, \ldots$ and, independently of these, iid indicator random variables $I_1, I_2, \ldots$; write $\Pr( I_t = 1 ) = \theta \in (0, 1]$. For $t = 1, 2, \ldots$, define
\[
  \bm{X}_t =
  \begin{cases}
    \bm{\xi}_t & \text{if $I_t = 1$,} \\
    \bm{X}_{t-1} & \text{if $I_t = 0$.}
  \end{cases}
\]
Then $\bm{X}_0, \bm{X}_1, \ldots$ is a stationary sequence. The process is a simplified version of the doubly stochastic model in \citet[Section~3]{smith:weissman:1994}. By stationarity, we can assume without loss of generality that the process is defined for all $t \in \ZZ$.

Because of the random repetition mechanism, the process $(\bm{X}_t)_t$ is $\beta$-mixing and the mixing coefficients $\beta(n)$ are of the order $O((1-\theta)^n)$ as $n \to \infty$; see Lemma~\ref{lem:RR:beta}.

Let $\bm{M}_m = (M_{m,1}, \ldots, M_{m,d})$ with $M_{m,j} = \max(X_{1,j},\ldots, X_{n,j})$. Further, put $F_m( \bm{x} ) = \Pr[ \bm{M}_m \le x ]$ and $F_{m,j}(x_j) = \Pr[ M_{m,j} \le x_j ]$. Assume the margins $F_{m,j}$ are continuous and let $C_m$ be the copula of $F_m$.

\begin{proposition}
\label{prop:RR:Cm}
For $\bm{u} \in (0, 1]^d$ and as $m \to \infty$,
\begin{multline*}
  C_m( \bm{u} ) \\
  = \{ 1 + o(1) \} \, 
  \biggl[ 
    1 - \theta + \theta
      C_1 
      \biggl( 
	1 + \frac{\log(u_1) + o(1)}{\theta (m-1)}, 
	\ldots, 
	1 + \frac{\log(u_d) + o(1)}{\theta (m-1)}
      \biggr) 
  \biggr]^{m-1}.
\end{multline*}
Consequently, if $C_1$ is in the copula domain of attraction of an extreme value copula $C_\infty$, then also $C_m \to C_\infty$ as $m \to \infty$.
\end{proposition}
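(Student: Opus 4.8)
The plan is to derive an exact closed form for the joint distribution function $F_m$ of the block maxima and then to push it through the probability integral transform.

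\emph{Step 1 (exact law of $\bm{M}_m$).} Condition on the indicator pattern $I_2, \dots, I_m$. Among $\bm{X}_1, \dots, \bm{X}_m$, the distinct ``fresh'' values are $\bm{X}_1$ together with those $\bm{\xi}_t$ with $t \in \{2, \dots, m\}$ and $I_t = 1$: whenever $I_t = 0$ the vector $\bm{X}_t$ merely copies $\bm{X}_{t-1}$ and adds nothing to the componentwise maximum. These fresh values are mutually independent, independent of $(I_t)_{t\ge 2}$, and each has distribution function $F_1$ (the stationary law coincides with the law of $\bm{\xi}$). Hence, with $N = \sum_{t=2}^m I_t \sim \mathrm{Bin}(m-1, \theta)$, we get $\Pr[\bm{M}_m \le \bm{x} \mid I_2, \dots, I_m] = F_1(\bm{x})^{N+1}$, and taking expectations via the binomial probability generating function yields the exact identity
\[
  F_m(\bm{x}) = F_1(\bm{x}) \bigl( 1 - \theta + \theta F_1(\bm{x}) \bigr)^{m-1},
\]
and likewise $F_{m,j}(x_j) = F_{1,j}(x_j) \bigl( 1 - \theta + \theta F_{1,j}(x_j) \bigr)^{m-1}$ for the margins. (Sanity checks: $\theta = 1$ returns $F_m = F_1^m$, the i.i.d.\ case; $\theta \downarrow 0$ returns $F_m = F_1$, the a.s.-constant series.)

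\emph{Step 2 (passing to the copula).} The map $g_m(v) = v(1 - \theta + \theta v)^{m-1}$ is a strictly increasing bijection of $(0,1]$ onto itself, so $F_{m,j} = g_m \circ F_{1,j}$ and therefore $v_j := F_{1,j}\bigl( F_{m,j}^\leftarrow(u_j) \bigr) = g_m^{-1}(u_j)$ for $u_j \in (0,1]$. Writing $\bm{v} = (v_1, \dots, v_d)$ and combining Step~1 with $F_1 = C_1(F_{1,1}, \dots, F_{1,d})$,
\[
  C_m(\bm{u}) = F_m\bigl( F_{m,1}^\leftarrow(u_1), \dots, F_{m,d}^\leftarrow(u_d) \bigr) = C_1(\bm{v}) \bigl( 1 - \theta + \theta C_1(\bm{v}) \bigr)^{m-1} = g_m\bigl( C_1(\bm{v}) \bigr).
\]
For fixed $u_j \in (0,1]$ one checks $v_j \to 1$ (otherwise $u_j \le (1 - \theta\delta)^{m-1} \to 0$ along a subsequence where $v_j \le 1 - \delta$), and then, taking logarithms in $u_j = v_j (1 - \theta + \theta v_j)^{m-1}$ and using $\log(1 + x) = x + o(x)$, $v_j = 1 + \{ \log u_j + o(1) \} / \{ \theta(m-1) \}$. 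Substituting this into the display and noting $|C_1(\bm{v}) - 1| \le \sum_{j=1}^d (1 - v_j) \to 0$ (Lipschitz continuity of copulas) proves the first assertion.

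\emph{Step 3 (convergence to $C_\infty$).} Suppose $C_1$ is in the copula domain of attraction of $C_\infty$. Equivalently, the stable tail dependence function $\ell$ exists, $1 - C_1(\bm{1} - t \bm{x}) = t \{ \ell(\bm{x}) + o(1) \}$ as $t \downarrow 0$, and $L(\bm{u}) := \ell(-\log u_1, \dots, -\log u_d) = -\log C_\infty(\bm{u})$ (see, e.g., \citet[Ch.~8]{bgst:2004}). Applying this with $t = 1/\{\theta(m-1)\}$ and $x_j = -\log u_j + o(1)$, and using continuity of $\ell$ to absorb the $o(1)$ in its arguments, gives $C_1(\bm{v}) = 1 - \{ L(\bm{u}) + o(1) \} / \{ \theta(m-1) \}$; hence $(1 - \theta + \theta C_1(\bm{v}))^{m-1} = (1 - \{L(\bm{u}) + o(1)\}/(m-1))^{m-1} \to e^{-L(\bm{u})} = C_\infty(\bm{u})$, and multiplying by $C_1(\bm{v}) \to 1$ yields $C_m(\bm{u}) \to C_\infty(\bm{u})$ for $\bm{u} \in (0,1]^d$.

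The computational heart is Step~1, which is short once one spots the conditioning on the repetition pattern. The one genuine subtlety lies in Step~3: because of the factor $1 - \theta + \theta C_1(\bm{v})$ — rather than $C_1(\bm{v})$ itself — one must use the \emph{first-order} behaviour of $C_1$ at the vertex $\bm{1}$ (i.e.\ the stable tail dependence function), not merely convergence of its $m$-th powers, and one must propagate the $o(1)$ remainders carefully through the composition $g_m \circ C_1 \circ (g_m^{-1}, \dots, g_m^{-1})$.
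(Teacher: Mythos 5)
Your proposal is correct and follows essentially the same route as the paper: the exact identity $F_m(\bm{x}) = F_1(\bm{x})\{1-\theta+\theta F_1(\bm{x})\}^{m-1}$ (which the paper obtains by a one-step recursion on $I_m$ and induction, while you condition on the whole repetition pattern and use the binomial generating function), then the asymptotics $F_{1,j}(F_{m,j}^\leftarrow(u_j)) = 1 + \{\log u_j + o(1)\}/\{\theta(m-1)\}$ combined with the Lipschitz bound for $C_1$, and finally the locally uniform first-order expansion of $C_1$ at $\bm{1}$ via the stable tail dependence function to get $C_m \to C_\infty$. No gaps worth flagging.
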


The proof of Proposition~\ref{prop:RR:Cm} is given in Appendix~\ref{subsec:proofs:RR}.

\subsection{Rate of convergence in the i.i.d.\ case} \label{subsec:OPC}

For $\theta >0$ and $\beta \ge 1$, 
the outer power transform of a Clayton copula is defined as
\begin{equation} \label{eq:OPC}
  C_{\theta, \beta}(u, v) = [1 + \{ (u^{-\theta} - 1)^\beta + (v^{-\theta} - 1)^\beta \}^{1/\beta}]^{-1/\theta}.
\end{equation}
The copula of the pair of componentwise maxima of an i.i.d.\ sample of size $m$ from a continuous distribution with copula $C_{\theta,\beta}$ is equal to
\begin{equation*}
%\label{eq:Cn}
  \bigl\{ C_{\theta,\beta}(u^{1/m}, v^{1/m}) \bigr\}^m
  =
  C_{\theta/m,\beta}(u, v).
\end{equation*}
As $m \to \infty$, this copula converges to the Gumbel--Hougaard copula with shape parameter $\beta \ge 1$,
\begin{equation}
\label{eq:Gumbel}
  C_{0, \beta}(u, v) 
  :=
  \lim_{m \to \infty} C_{\theta/m,\beta}(u, v)
  = 
  \exp [ - \{ (- \log u)^{\beta} + (- \log v)^{\beta} \}^{1/\beta} ],
\end{equation}
see \cite{ChaSeg09}.
The following result shows that the rate of convergence in \eqref{eq:Gumbel} is $O(1/m)$; its proof is being given in Appendix~\ref{subsec:proofs:OPC}.

\begin{proposition} \label{prop:iidrate}
We have 
\[
  \lim_{m \to \infty} m \, \{ C_{\theta/m,\beta}(u, v) - C_{0,\beta}(u, v) \}
  =
  \theta \, \Gamma_\beta(u, v),
\]
where 
\[
  \Gamma_\beta(u, v)
  =
  \frac{1}{2} \exp \{ - (x^\beta + y^\beta)^{1/\beta} \} \,
  \{ (x^\beta + y^\beta)^{2/\beta} - (x^\beta + y^\beta)^{1/\beta-1} (x^{\beta+1} + y^{\beta+1}) \}
\]
with $x = - \log u$ and $y = - \log v$. The convergence is uniform in $(u, v) \in [0, 1]^2$.
\end{proposition}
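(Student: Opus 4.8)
The plan is to prove Proposition~\ref{prop:iidrate} by a direct asymptotic expansion of the scalar function $m \mapsto m \, \{ C_{\theta/m,\beta}(u,v) - C_{0,\beta}(u,v) \}$, exploiting the explicit closed form in~\eqref{eq:OPC}. Fix $(u,v) \in (0,1)^2$ and set $x = -\log u$, $y = -\log v$. The key observation is that $C_{\theta/m,\beta}$ depends on the first parameter only through $s := \theta/m$, which tends to $0$, so the statement is really a claim about the derivative of $s \mapsto C_{s,\beta}(u,v)$ at $s=0$: if $g(s) := C_{s,\beta}(u,v)$ extends smoothly to $s=0$ with $g(0) = C_{0,\beta}(u,v)$, then $m\{C_{\theta/m,\beta}(u,v)-C_{0,\beta}(u,v)\} = m\{g(\theta/m)-g(0)\} \to \theta \, g'(0)$, and the task reduces to computing $g'(0)$ and checking it equals $\Gamma_\beta(u,v)$.

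The computation of $g'(0)$ I would organize as follows. Write $a(s) = (u^{-s}-1)/s$ and $b(s) = (v^{-s}-1)/s$; as $s \searrow 0$ one has $a(s) \to x$, $b(s) \to y$, with second-order expansions $a(s) = x + \tfrac{1}{2}sx^2 + o(s)$ and likewise for $b(s)$, obtained from $u^{-s} = e^{sx} = 1 + sx + \tfrac{1}{2}s^2x^2 + o(s^2)$. Then $(u^{-s}-1)^\beta = s^\beta a(s)^\beta$, so $\{(u^{-s}-1)^\beta + (v^{-s}-1)^\beta\}^{1/\beta} = s\,\{a(s)^\beta + b(s)^\beta\}^{1/\beta}$, and hence $C_{s,\beta}(u,v) = [1 + s\,R(s)]^{-1/s}$ with $R(s) := \{a(s)^\beta + b(s)^\beta\}^{1/\beta}$. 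Now take logarithms: $\log g(s) = -\tfrac{1}{s}\log(1 + sR(s))$. Using $\log(1+sR(s)) = sR(s) - \tfrac{1}{2}s^2R(s)^2 + o(s^2)$ gives $\log g(s) = -R(s) + \tfrac{1}{2}sR(s)^2 + o(s)$. Since $R(0) = (x^\beta+y^\beta)^{1/\beta}$, we get $\log g(0) = -(x^\beta+y^\beta)^{1/\beta}$, consistent with~\eqref{eq:Gumbel}, and
\[
  \frac{\diff}{\diff s}\log g(s)\Big|_{s=0} = -R'(0) + \tfrac{1}{2}R(0)^2.
\]
It remains to compute $R'(0)$. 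From $R(s)^\beta = a(s)^\beta + b(s)^\beta$ and $a'(0) = \tfrac{1}{2}x^2$, $b'(0) = \tfrac{1}{2}y^2$, differentiating yields $\beta R(0)^{\beta-1}R'(0) = \beta x^{\beta-1}\cdot\tfrac{1}{2}x^2 + \beta y^{\beta-1}\cdot\tfrac{1}{2}y^2$, so $R'(0) = \tfrac{1}{2}(x^{\beta+1}+y^{\beta+1})\,(x^\beta+y^\beta)^{1/\beta - 1}$. Substituting and then multiplying by $g(0) = \exp\{-(x^\beta+y^\beta)^{1/\beta}\}$ (since $g'(0) = g(0)\cdot(\log g)'(0)$) reproduces exactly the stated $\Gamma_\beta(u,v)$; the boundary cases where $u$ or $v$ equals $0$ or $1$ are handled by the convention that reads $\Gamma_\beta$ as zero there, matching the fact that $C_{s,\beta}$ and $C_{0,\beta}$ agree on the boundary of the unit square.

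The remaining issue, and the only genuinely delicate one, is the \emph{uniformity} of the convergence over $(u,v) \in [0,1]^2$, which the pointwise Taylor argument above does not deliver on its own — the remainder terms blow up as $(u,v)$ approaches a corner where $x$ or $y$ tends to $\infty$. The plan here is twofold. First, by symmetry and a compactness argument it suffices to control the convergence on a region of the form $\{(u,v): u \le v\}$ and, after a further reduction, to handle separately a compact sub-square $[\eta,1-\eta]^2$ (where the expansions hold uniformly with uniformly bounded remainders, so Taylor's theorem with explicit remainder gives a uniform $O(1/m)$ bound directly) and a neighborhood of the boundary. Near the boundary one uses the monotonicity and Lipschitz bounds: both $C_{\theta/m,\beta}$ and $C_{0,\beta}$ are Lipschitz with constant $1$ in each argument and both have standard uniform margins, and moreover $C_{0,\beta} \le C_{\theta/m,\beta}$ (the outer power Clayton copula is stochastically ordered toward its Gumbel limit — this monotonicity in the Clayton parameter should be quotable or checkable directly from the formula), so that for $u$ or $v$ close to $0$ the difference is squeezed between $0$ and something tending to $0$ uniformly, and for $u$ or $v$ close to $1$ the $1$-Lipschitz property forces $|C_{\theta/m,\beta}(u,v) - C_{0,\beta}(u,v)|$ to be small together with $\Gamma_\beta$. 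Combining the uniform estimate on the compact core with these boundary squeezes, and letting $\eta \searrow 0$, yields the claimed uniform convergence. I expect verifying the precise uniform remainder control near the boundary — rather than the algebra of $\Gamma_\beta$, which is mechanical — to be the main obstacle.
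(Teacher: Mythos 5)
Your pointwise computation is correct and, after unwinding, is essentially the same algebra as the paper's: writing $C_{s,\beta}(u,v)=\{1+sR(s)\}^{-1/s}$ with $R(s)=\{a(s)^\beta+b(s)^\beta\}^{1/\beta}$, your term $\tfrac12 R(0)^2$ is what the paper extracts from the comparison of $(1+sz)^{-1/s}$ with $e^{-z}$ (its Lemma~\ref{lem:exp}), and your term $-R'(0)$ is what it extracts from the expansion of $(u^{-s}-1)/s$ around $-\log u$ (its Lemma~\ref{lem:log}), via the intermediate function $D_{\theta,\beta}$. So the identification of $\Gamma_\beta$ is fine.

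The genuine gap is in your treatment of uniformity, which is precisely the content of the proposition beyond the derivative computation. The quantity to be controlled uniformly is $m\{C_{\theta/m,\beta}(u,v)-C_{0,\beta}(u,v)\}-\theta\,\Gamma_\beta(u,v)$, but your boundary squeezes only bound the raw difference $|C_{\theta/m,\beta}-C_{0,\beta}|$: near $u\approx 1$ the $1$-Lipschitz property gives $|C_{\theta/m,\beta}(u,v)-C_{0,\beta}(u,v)|\le 2(1-u)\le 2\eta$, which after multiplication by $m$ becomes $2m\eta$ and blows up for fixed $\eta$ as $m\to\infty$; similarly, near $u\approx 0$ the bound by $\min(u,v)\le\eta$ (even granting the concordance ordering $C_{0,\beta}\le C_{s,\beta}$, which you assert but do not verify) gives $m\eta$, again useless. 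So the ``compact core plus boundary squeeze, then $\eta\searrow 0$'' scheme fails as stated: one needs bounds on $m$ times the difference that are small near the boundary uniformly in $m$, not smallness of the difference itself. The paper gets this by making the expansion itself uniform: its Lemma~\ref{lem:exp} is uniform over all $x\ge 0$ (the limit $e^{-x}x^2/2$ being bounded and vanishing as $x\to\infty$), and the error in the second piece appears multiplied by exponentially decaying prefactors, controlled via $(u^{-\theta}-1)/\theta\ge-\log u$ and $D_{\theta,\beta}\le C_{0,\beta}\le\min(u,v)$, so the corner $x\vee y\to\infty$ is handled by the exponential-beats-polynomial structure of the expansion rather than by Lipschitz bounds on the copulas; the region $u$ or $v$ near $1$ is covered because Lemma~\ref{lem:log} is uniform on compact subsets of $(0,1]$ containing the point $1$. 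To repair your proof you would need an analogous quantitative, uniform-in-$(u,v)$ remainder bound for your logarithmic expansion (e.g.\ a mean value argument in $s$ with a derivative bound that decays in $x,y$), rather than the proposed squeeze.
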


As a consequence, if $m \gg n^{1/3}$, then $\sqrt{k} = o(m)$ and Equation~\ref{eq:Gamma} is satisfied with $\Gamma\equiv 0$. If $m \sim cn^{1/3}$ for some positive constant $c$, then $\sqrt k / m\to c^{-3/2}$ and hence Equation~\ref{eq:Gamma} is satisfied with $\Gamma(u,v)= c^{-3/2} \theta \Gamma_\beta(u,v)$. If $m = o(n^{1/3})$, then the block sizes are too small and Condition~\ref{cond:C12}~(a) and Equation \eqref{eq:Gamma} fail. 

By similar arguments as used in the proof of Proposition~\ref{prop:iidrate}, it can be shown that the partial derivatives of $C_{\theta, \beta}$ converge to those of $C_{0, \beta}$, uniformly on the relevant subsets in Condition~\ref{cond:C12}~(b).

%Working out Condition~\ref{cond:C12} seems to be quite involved, and we 

\section{Numerical results}
\label{sec:num}

In this section, we investigate the finite-sample performance of the minimum-distance estimator for the Pickands dependence function $A_\infty$ by means of a small simulation study. 

\paragraph{The setup.} 
As a time series model, we consider 
the bivariate moving maximum process $(U_{t,1}, U_{t,2})_{t \in \ZZ}$ of order $1$ as introduced in Section~\ref{subsec:MM}, i.e.,
\begin{align}
\label{eq:setup}
  U_{t,1} = \max(W_{t,1}^{1/a}, W_{t-1,1}^{1/(1-a)}), \qquad
  U_{t,2} = \max(W_{t,2}^{1/a}, W_{t-1,2}^{1/(1-b)}),
\end{align}
where $(a, b) \in (0, 1)^2$ and $(W_{t,1}, W_{t,2})_{t \in \ZZ}$ is a bivariate iid sequence whose marginal distributions are uniform on $(0,1)$ and whose joint cdf is denoted by $D$. In this section, we present results for two different choices for~$D$:  
\begin{enumerate}[1.]
\item 
$D=C_{\theta, \beta}$, the outer power transform of a Clayton copula with parameters $\theta > 0$ and $\beta \ge 1$ as defined in \eqref{eq:OPC}. From the results presented in Section~\ref{subsec:OPC}, independently of $\theta>0$, the max-attractor copula $D_\infty$ is the Gumbel--Hougaard copula, whose Pickands dependence function is given by
\[
  A_\infty(t) = \{ t^\beta + (1-t)^\beta \}^{1/\beta}, \quad \beta \ge 1.
\]
In the simulations, we fixed $\theta=1$. 
\item The $t$-copula with $\nu>0$ degrees of freedom and correlation parameter $\rho\in(-1,1)$, given by 
\begin{multline*}
  D(u,v) = \\ \int_{-\infty}^{ \mathbf{t}_\nu^{-1}(u)} \int_{-\infty}^{\mathbf{t}_\nu^{-1}(v)} 
  \frac{1}{\pi \nu |P|^{1/2}} 
  \frac{\Gamma(\tfrac{\nu}{2} +1 )}{\Gamma(\tfrac{\nu}{2}) } 
  \left( 1+ \frac{x'P^{-1}x}{\nu}\right)^{-\nu/2+1}\, dx_2\,dx_1,
\end{multline*}
where $\mathbf{t}_\nu$ denote the cdf of the univariate $t$-distribution with $\nu$ degrees of freedom and where $P$ denotes the $2\times2$ correlation matrix with off-diagonal element $\rho$. The $t$-copula lies in the max-domain of attraction of the $t$-extreme value copula characterized by the Pickands dependence function
\begin{multline*}
  A_\infty(t) 
  = 
  t \times \mathbf{t}_{\nu+1}(z_t) + (1-t) \times \mathbf{t}_{v+1}(z_{1-t}), \\
  \text{where} \quad z_t = (1+\nu)^{1/2} \left[ \{t/(1-t)\}^{1/\nu} - \rho\right] (1-\rho^2)^{-1/2},
\end{multline*}
see, e.g., \cite{DemMcn05}.
Throughout the simulations we fixed $\nu=4$.
\end{enumerate}
%Let $D$ be the joint distribution function $(W_{t,1}, W_{t,2})$, which is a copula, and suppose $D \in \CDA(D_\infty)$ for some extreme value copula $D_\infty$ with Pickands dependence function $A$. 
%As shown in Section~\ref{subsec:MM}, The copula, $C_m$, of $(M_{m,1}, M_{m,2})$, where $M_{m,j} = \max \{ U_{t,j} : t = 1, \ldots, m \}$, then converges to $D_\infty$ as well.
The remaining parameter of the two models ($\beta$ and $\rho$, respectively) are chosen in such a way that the coefficient of upper tail dependence of $D$ varies in the set $\{0.25,0.5,0.75\}$. For $a$ and $b$ in \eqref{eq:setup} we consider all possible combinations such that $(a,b) \in\{0.25,0.5,0.75\}^2$.
Regarding the choice of $n,k$ and $m$,  we either fix $n=1,000$ and consider parameters $m\in \{1,2,\dots,30\}$, or we fix $m=30$ (a month, say) and consider block numbers $k\in \{12,24,36, \dots, 240\}$ (corresponding to one up to 20 years).

\paragraph{The estimators.} In addition to the estimator $\widehat{A}_{n,m_n}$ defined in Section~\ref{subsec:minidi}, we will also consider a simple (additive) boundary correction defined as 
\begin{align}
\label{eq:abc}  
  \widehat{A}_{n,m}^{abc}(t) 
  = 
  \widehat{A}_{n,m}(t) - (1-t) \{ \widehat{A}_{n,m}(0)  - 1\} - t \{ \widehat{A}_{n,m}(1) - 1\}.
\end{align}
Due to the fact that the second and the third summand on the right-hand side of this display are deterministic functions of order $o(k^{-1/2})$, the corrected estimator has the same asymptotic distribution as the uncorrected one.

The estimator $\widehat{A}_{n,m}$ depends on a tuning parameter $\gamma$ and a weight function $p$. We follow the proposals in \cite{BucDetVol11} and consider the choices $\gamma=2/3$ (the estimator is quite robust with respect to this or larger choices) and $p=p_\kappa(y) = (\kappa+1)^2 y^\kappa  |\log(y)|$ with $\kappa=0.5$, see Example~2.5 in \cite{BucDetVol11}. The latter choice yields a good compromise between good finite sample behavior and analytical tractability.

\paragraph{The target values.}
Our simulation study aims at investigating the performance of $\widehat{A}_{n,m}$ and $\widehat{A}_{n,m}^{abc}$ as estimators for $A_\infty$. For that purpose, we choose $21$ points $t_j=j/20$ in the unit interval, $j=0, 1, \dots, 20$, and estimate the summed squared bias 
\[
  B^{(sum)}:=\sum_{j=1}^{19} \{ \Eb[  \widehat{A}_{n,m}(j/20) - A_\infty(j/20) ]\}^2, 
\]
the summed variance 
\[
  \Var^{(sum)}:=\sum_{j=1}^{19} \Var\{ \widehat{A}_{n,m}(j/20) \}
\] 
and the summed mean squared error $\MSE^{(sum)}:=B^{(sum)} + \Var^{(sum)}$ by averaging out over $N=1,000$ repetitions (analogously for $\widehat{A}_{n,m}^{abc}$).

\paragraph{Results and discussion.}
The results are reported only partially. 
Figure~\ref{fig:knmse} is concerned with a fixed sample size $n=1,000$. We plot  $B^{(sum)}$, $\Var^{(sum)}$ and $\MSE^{(sum)}$ against the number of blocks $k_n$ (on a logarithmic scale) for the estimator $\widehat{A}_{n,m}^{abc}$ and for both copula models mentioned above with tail dependence coefficients in $\{0.25, 0.5, 0.75\}$ and with fixed $a=0.25$ and $b=0.5$. For the sake of brevity, we do not show any results for $\widehat{A}_{n,m}$ (they are slightly worse than those for $\widehat{A}_{n,m_n}^{abc}$ in most cases) or for different choices of $a$ and $b$ (they do not reveal any additional qualitative insight compared to the case $a=0.25$ and $b=0.5$). From the pictures we see that, as expected, the variance of the estimator is decreasing in $k$, while the bias is increasing. For $k=n=1,000$, which corresponds to $m=1$, i.e., to not forming blocks at all, it can be shown that the estimators are actually consistent for the function 
\begin{align}
\label{eq:astar}
  A_{1}^{\star} (t)
  = 
 \int_0^1 \log \{ C_1(y^{\bm{t}}) \} \frac{ p_{0.5}(y)}{\log(y)} \, dy
 =
 \frac{9}{4} \int_0^1   \sqrt y \  |\log \{ C_1(y^{\bm{t}}) \} | \, dy,
\end{align}
The latter fact may serve as an explanation for the different magnitude of the bias at the right end of the pictures in Figure~\ref{fig:knmse}. 
%More precisely, the $L_2$-distances between $A_{1}^{\star}$ and $A_\infty$ are given by 1/100 times
%\[
%  4.62~(\text{tdc}=0.25),~1.62~(\text{tdc}=0.5),~1.20~(\text{tdc}=0.75)
%\]
%for the outer power Clayton copula and by 1/100 times
%\[
%  2.86~(\text{tdc}=0.25),~2.26~(\text{tdc}=0.5),~0.80~(\text{tdc}=0.75)
%\]
%for the $t_4$-copula, which exactly resembles the ordering of the value of the summed squared bias at $\log_{10}(k_n)=3$ over the respective pictures in Figure~\ref{fig:knmse}. Regarding the summed $\MSE$, we observe a rather good and robust performance for values of $k_n$ between $10^{2.2} \approx 160$ and $10^{2.4} \approx 250$, corresponding to block lengths between $4$ and $6$.
More precisely, Table~\ref{tab:L2} states the $L_2$-distances between $A_{1}^{\star}$ and $A_\infty$, which exactly resemble the ordering of the value of the summed squared bias at $k_n=1,000$ over the respective pictures in Figure~\ref{fig:knmse}. Regarding the summed $\MSE$, we observe a rather good and robust performance for values of $k_n$ between $150$ and  $250$, corresponding to block lengths between $4$ and $7$.

\begin{table}[h!]
\begin{tabular}{l  c c c}
\toprule
Tail dependence coefficient & 0.25 & 0.5 & 0.75 \\ 
\midrule
Outer power Clayton copula & $4.62 \times 10^{-2}$ & $1.62 \times 10^{-2} $ & $1.20 \times 10^{-2}$ \\
$t_4$-copula & $2.86 \times 10^{-2}$ & $2.26 \times 10^{-2} $ & $0.80 \times 10^{-2}$\\
\bottomrule 
\end{tabular}
\label{tab:L2}
\caption{$L_2$-distances between $A_\infty$ and $A_{1}^*$, the latter being defined in \eqref{eq:astar}.}
\end{table}

Finally, in Figure~\ref{fig:mse-fixm}, we present simulation results on $\MSE^{(sum)}$ in the case of a fixed $m=30$ and with varying $k_n\in\{12,24, \dots, 240\}$, corresponding to monthly blocks over daily data for 1 up to 20 years. The shape of the functions are as expected; in particular we see that  $\MSE^{(sum)}$ approximately halves when the number of years doubles. Moreover, the pictures reveal a better performance for increasing strength of dependence. The latter may be explained by the fact that, in the extreme case of perfect dependence, the empirical copula is a deterministic function converging at rate $k_n^{-1}$ rather than $k_n^{-1/2}$.

\begin{figure}
\begin{center}
%\makebox[\textwidth]{
%\includegraphics[width=0.25\paperwidth]{pics-paper/su025}
%\includegraphics[width=0.25\paperwidth]{pics-paper/op025}
%\includegraphics[width=0.25\paperwidth]{pics-paper/t025}
%}
%\makebox[\textwidth]{
%\includegraphics[width=0.25\paperwidth]{pics-paper/su05}
%\includegraphics[width=0.25\paperwidth]{pics-paper/op05}
%\includegraphics[width=0.25\paperwidth]{pics-paper/t05}
%}
%\makebox[\textwidth]{
%\includegraphics[width=0.25\paperwidth]{pics-paper/su075}
%\includegraphics[width=0.25\paperwidth]{pics-paper/op075}
%\includegraphics[width=0.25\paperwidth]{pics-paper/t075}
%}
\includegraphics[width=0.49\textwidth]{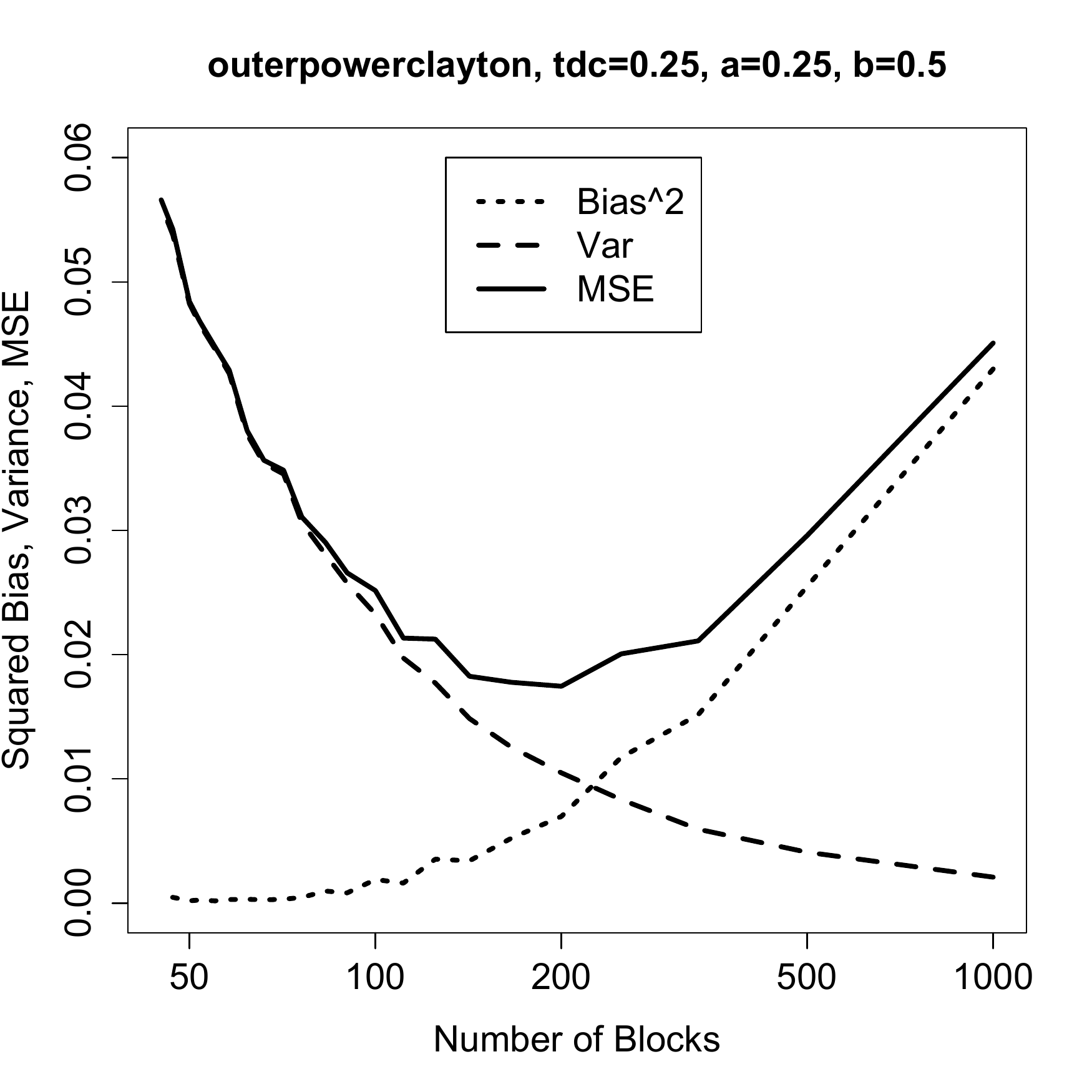}
\includegraphics[width=0.49\textwidth]{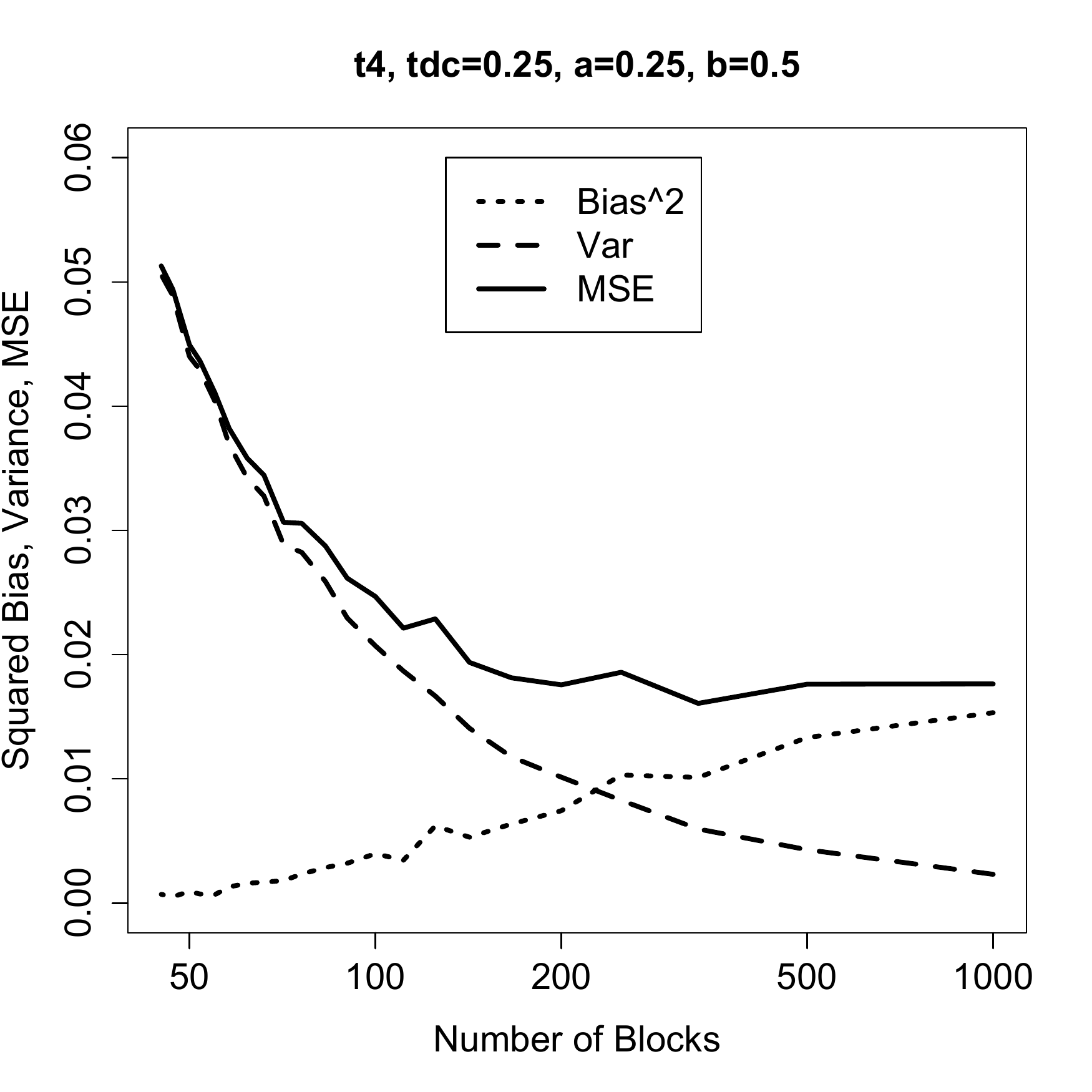}
\includegraphics[width=0.49\textwidth]{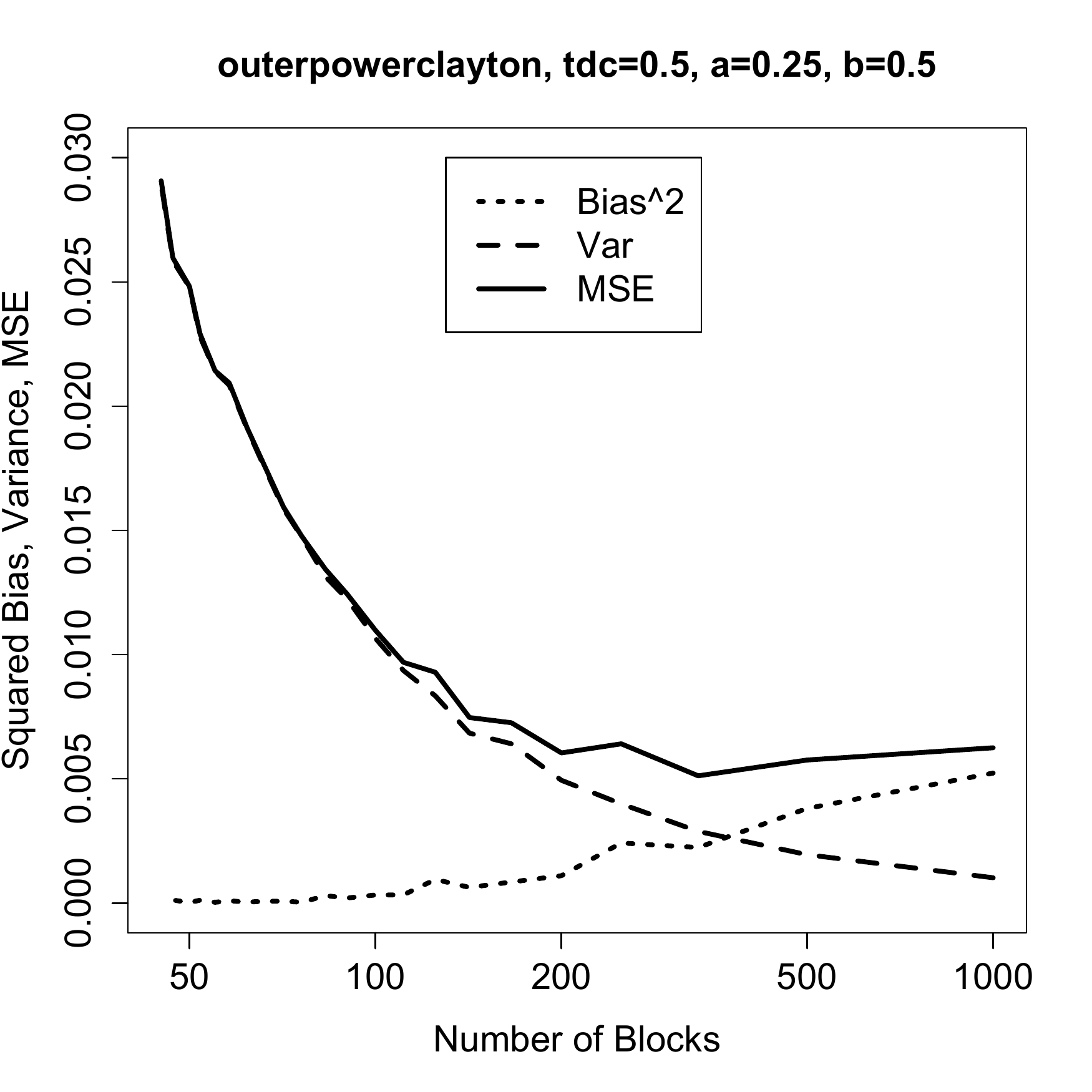}
\includegraphics[width=0.49\textwidth]{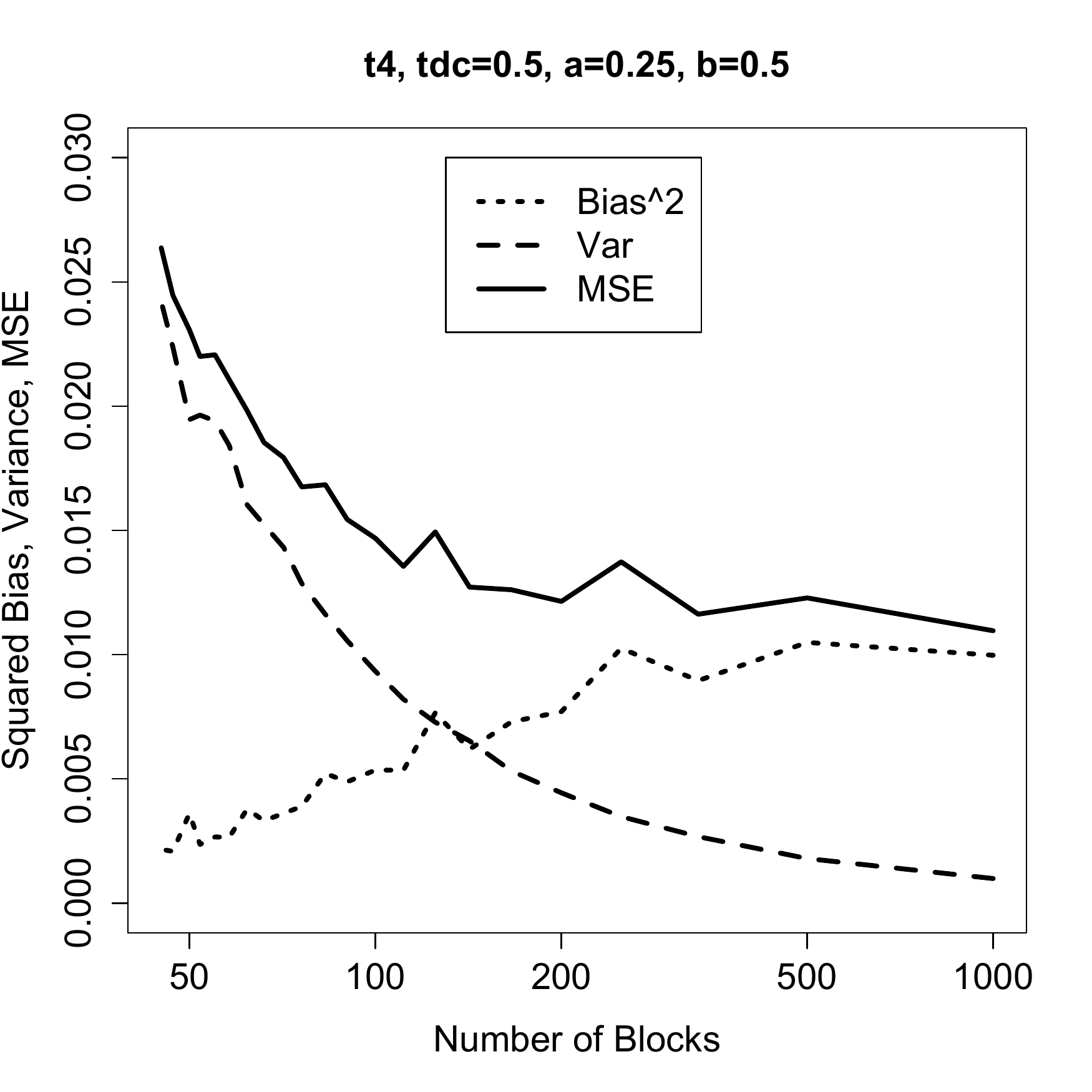}
\includegraphics[width=0.49\textwidth]{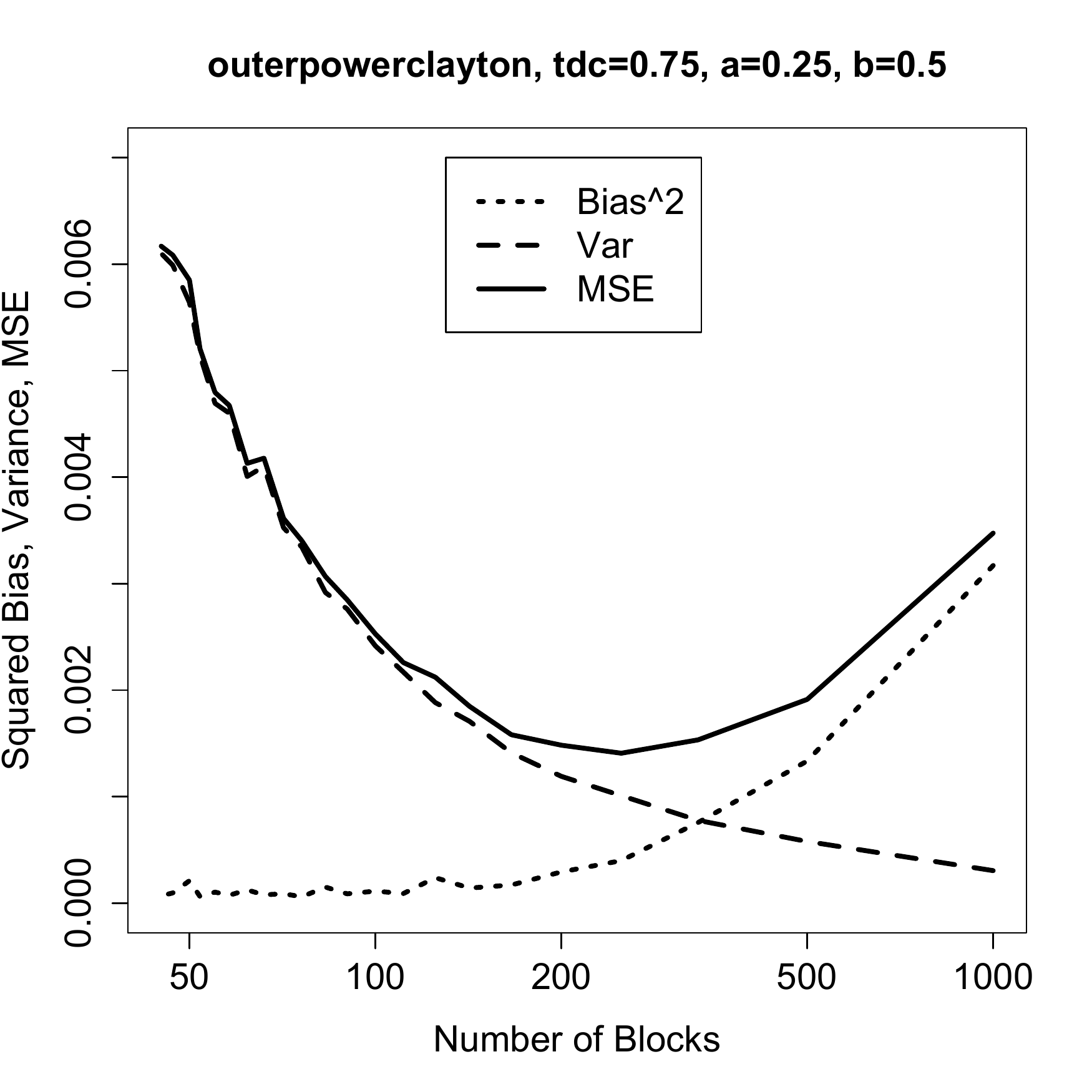}
\includegraphics[width=0.49\textwidth]{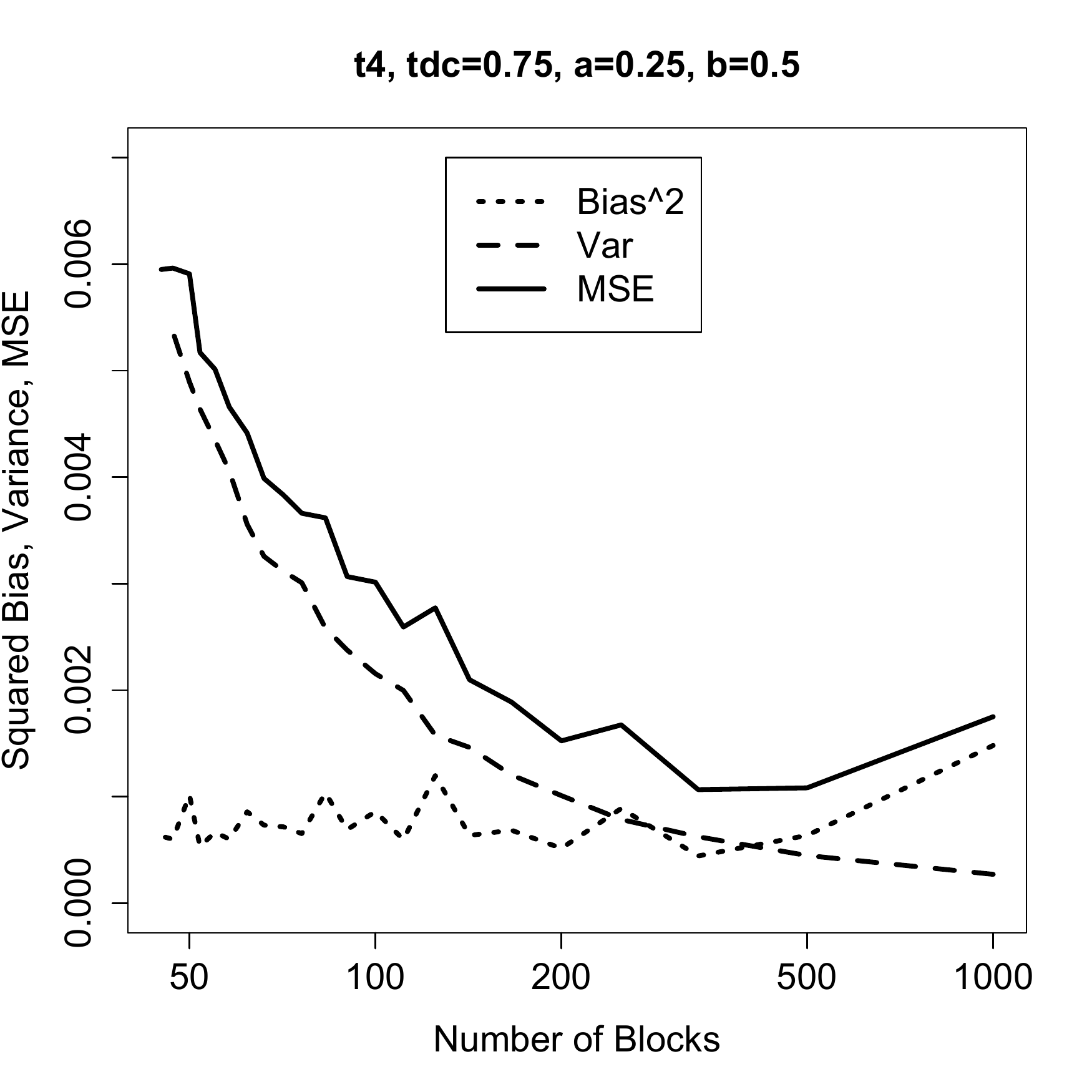}
\end{center}
\vspace{-0.4cm}
\caption{
Simulation results on $B^{(sum)}$, $\Var^{(sum)}$ and $\MSE^{(sum)}$ for fixed $n=1,000$ and varying number of blocks. From top to bottom: tail dependence coefficient $0.25, 0.5$ and $0.75$; left: outer power Clayton copula; right: $t_4$-copula. }
\label{fig:knmse}
\end{figure}

\begin{figure}
\begin{center}
%\makebox[\textwidth]{
%\includegraphics[width=0.5\paperwidth]{pics-paper/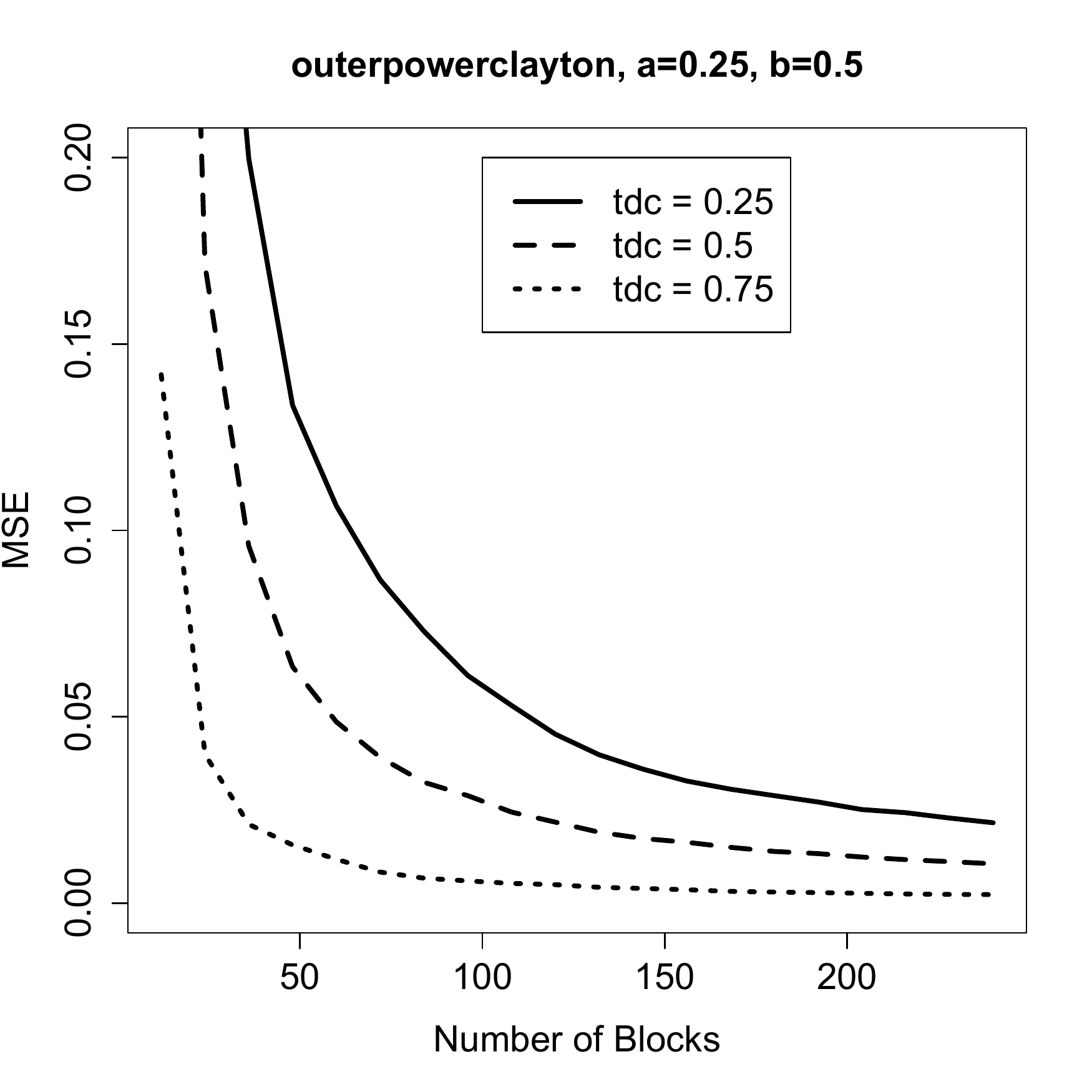}
%}
\includegraphics[width=0.49\textwidth]{OpCla-m-fixed.pdf}
\includegraphics[width=0.49\textwidth]{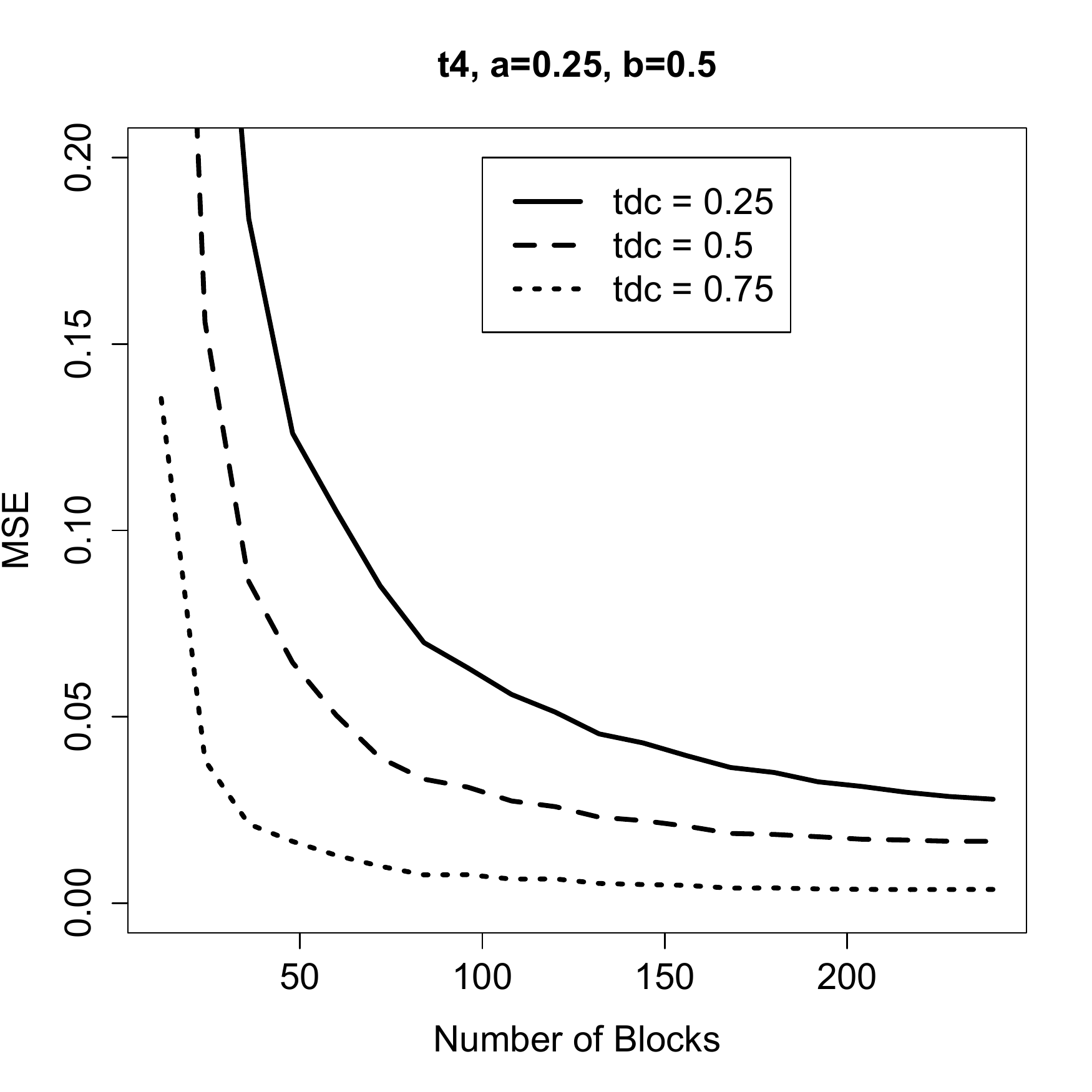}
\end{center}
\vspace{-0.4cm}
\caption{
Simulation results for $\MSE^{(sum)}$, with fixed $m=30$. Left: outer power transform of a Clayton copula, right: $t_4$-copula, both with parameter chosen in such a way that the coefficient of upper tail dependence is given by $0.25$ (solid lines), $0.5$ (dashed lines) or $0.75$ (dotted lines).
}
\label{fig:mse-fixm}
\end{figure}

\section{Conclusion and discussion}
\label{sec:conclusion}

The block maxima method is a time-honoured method in extreme value analysis. In the asymptotic theory, the block maxima are usually modelled as being sampled randomly from an extreme value distribution. In practice, however, the maxima are computed over blocks of finite length. The block length then becomes a tuning parameter, much like the threshold in the peaks-over-threshold method. For large block lengths, the extreme value approximation is accurate, but there are few blocks, leading to large sample variation. Taking smaller blocks augments the number of blocks and thereby reduces the variance of the estimators but at the cost of a potential bias stemming from a bad fit of the extreme value distribution.

The issue is investigated in the context of the nonparametric estimation of the limiting extreme value copula of vectors of componentwise block maxima. The underlying series is supposed to be an absolutely regular, stationary multivariate time series. The sample is partitioned into blocks in such a way that both the block length and the number of blocks tend to infinity. Functional central limit theorems state the asymptotic normality of the empirical copula process and of a rank-based, nonparametric minimum-distance estimator of the Pickands dependence function. The results are illustrated numerically for bivariate moving maximum processes, where the bias-variance trade-off is clearly visible.

% Our conditions involving $\beta$-coefficients are relatively strong. Maybe they can be relaxed \textit{\`a la} weak dependence (Doukhan and co).

The paper leaves ample opportunity for further research into the large-sample theory for the block maxima method for vectors of maxima over blocks of increasing length. We just mention a few possibilities:
\begin{itemize}[--]
\item
The set-up being nonparametric, a convenient way to calculate standard errors would be via bootstrapping the empirical copula process of block maxima. See for instance \cite{bucher:dette:2010} for a review of resampling methods for empirical copula processes.
% \item
% Sequential versions \`a la \citet{BucVol13}.
\item
Often, the extreme value copula is modelled parametrically. In combination with extreme value distributions for the margins, this leads to a parametric model for the block maxima \citep{tawn:1988,tawn:1990}. The asymptotic theory of estimators for the parameters based on a triangular array of block maxima could then be investigated as well.
\item
The minimum distance estimator of the Pickands dependence function is itself not a Pickands dependence function. A way of enforcing the proper shape constraints in arbitrary dimensions is via $L^2$-projection on a parametric sieve \citep{GudSeg12}.
\item
Apart from estimation, there are many interesting hypothesis tests that can be investigated: the goodness-of-fit of a parametric model \citep{genest:etal:2011}, the max-stability hypothesis \citep{kojadinovic:segers:yan:2011}, symmetry or other shape constraints \citep{kojadinovic:yan:2012}, etc.
% \item
% Our estimator is rank-based. For block maxima, however, it makes sense to estimate the marginal distributions by generalised extreme value distributions. To be studied. Not sure, however, if this will make the estimator better. Note that \citet{dombry:2013} does not allow for serial dependence, so the univariate case needs to be solved first.
% \item
% What if long memory? Then the limit may be completely different.
% \item
% \js{In case the associate editor is Holger Drees\ldots} Rather than our direct approach, one could attempt to arrive at our results via the theory of empirical processes of cluster functionals as in \citet{drees:rootzen:2010}. However, it is not clear what to take as the `core' of a cluster, since in our case there is no naturally arising threshold.
% \item A note on hypi-convergence if smoothness conditions fail to hold.
\item
In Section~\ref{subsec:OPC}, we worked out Condition 3.4 and formula (3.3) in a particular i.i.d.\ situation. We suspect that the convergence rate $O(1/m)$ holds true for more general i.i.d.\ models. 

Within a time series setting, the moving maximum and the random repetition process considered in the paper are a bit artificial. What can one say about the copulas $C_m$ and $C_\infty$ for more common time series models? For GARCH type models, even the computation of the copula, $C_1$, of the stationary distribution is challenging, and the derivation of convergence rates as in Condition~\ref{cond:C12} is even more so.
%\item The moving maximum process considered in the paper is a bit artificial. What can one say about the copulas $C_m$ and $C_\infty$ for common time series models? For GARCH type models, even the computation of the copula, $C_1$, of the stationary distribution is challenging.
\end{itemize}

\section*{Acknowledgments}

The authors would like to thank the Associate Editor and the Reviewer for their constructive comments on an earlier version of this manuscript. In particular, we are grateful to the Reviewer for pointing out that a former version of Condition~\ref{cond:alpha-beta} was too restrictive. 

Parts of this paper were written when A.\ B\"ucher was a post-doctoral researcher at Universit\'e catholique de Louvain, Belgium, and at Ruhr-Universit\"at Bochum, Germany.
His reasearch  has been supported in parts by the Collaborative Research Center ``Statistical modeling of nonlinear dynamic processes'' (SFB 823, Project A7) of the German Research Foundation (DFG) and by the IAP research network Grant P7/06 of the Belgian government (Belgian Science Policy), which is gratefully acknowledged.

J. Segers gratefully acknowledges funding by contract ``Projet d'Act\-ions de Re\-cher\-che Concert\'ees'' No.\ 12/17-045 of the ``Communaut\'e fran\c{c}aise de Belgique'' and by IAP research network Grant P7/06 of the Belgian government (Belgian Science Policy).

\appendix

\section{Proofs for Section~\ref{sec:main}}
\label{sec:proofs:main}

%Throughout, simplify notation by writing $m = m_n$, $k = k_n$ and $\ell = \ell_n$.

\subsection{Proofs for Subsection~\ref{subsec:emp}}
\label{subsec:proofs:emp}

\begin{lemma}
\label{lem:elm}
If $\ell = o(m)$ and $(m/\ell) \, \alpha(\ell) \to 0$, then for every $j \in \{1, \ldots, d\}$ and every $u > 0$,
\[
  \Pr \{ F_{m,j}( M_{\ell,1,j} ) > u \} = O( \ell/m ), \qquad n \to \infty.
\]
\end{lemma}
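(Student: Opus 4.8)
The plan is to exploit that a block of length $\ell = o(m)$ is far shorter than a block of length $m$, so that its maximum lies, with probability $1 - O(\ell/m)$, below any fixed quantile of $M_{m,1,j}$. First I would reduce to $u \in (0,1)$: for $u \ge 1$ the event $\{ F_{m,j}(M_{\ell,1,j}) > u \}$ is empty because $F_{m,j} \le 1$, and there is nothing to prove. Since $F_{m,j}$ is continuous and increases from $0$ to $1$, I would choose $x_u \in \reals$ with $F_{m,j}(x_u) = u$; monotonicity of $F_{m,j}$ then gives $\{ F_{m,j}(M_{\ell,1,j}) > u \} \subseteq \{ M_{\ell,1,j} > x_u \}$, so it suffices to show that $1 - F_{\ell,j}(x_u) = \Pr( M_{\ell,1,j} > x_u ) = O(\ell/m)$.

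The key step is a lower bound for $F_{m,j}$ in terms of $F_{\ell,j}$, obtained by a blocking-with-gaps argument. Inside $\{1, \dots, m\}$ I would place $r = \floor{m/(2\ell)}$ disjoint ``big blocks'' of length $\ell$, each separated from the next by a gap of length $\ell$: let big block $i$ consist of the indices $(2i-2)\ell+1, \dots, (2i-1)\ell$, and set $N_i = \max\{ X_{t,j} : t \text{ in big block } i \}$. By stationarity each $N_i$ has distribution function $F_{\ell,j}$, and $N_1 = M_{\ell,1,j}$. Because $\{ M_{m,1,j} \le x \}$ forces $N_i \le x$ for all $i$, and because the $N_i$ are measurable with respect to $\ell$-separated blocks of indices, iterating the $\alpha$-mixing inequality in a telescoping fashion yields, for all $x \in \reals$,
\[
  F_{m,j}(x) \le \Pr\Bigl( \bigcap_{i=1}^r \{ N_i \le x \} \Bigr) \le F_{\ell,j}(x)^r + (r-1)\, \alpha(\ell).
\]
Evaluating at $x = x_u$ gives $F_{\ell,j}(x_u)^r \ge u - r\,\alpha(\ell)$.

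It remains to insert the hypotheses. From $\ell = o(m)$ we get $r \to \infty$, indeed $r \ge m/(4\ell)$ for $n$ large; from $(m/\ell)\,\alpha(\ell) \to 0$ we get $r\,\alpha(\ell) \to 0$, so $u - r\,\alpha(\ell) \ge u/2$ eventually. Using the elementary bound $1 - a^{1/r} = 1 - \exp\{ -r^{-1}\log(1/a) \} \le r^{-1} \log(1/a)$, valid for $a \in (0,1)$, I obtain for large $n$
\[
  1 - F_{\ell,j}(x_u) \le 1 - (u - r\,\alpha(\ell))^{1/r} \le 1 - (u/2)^{1/r} \le \frac{\log(2/u)}{r} \le \frac{4\,\log(2/u)\,\ell}{m},
\]
which is $O(\ell/m)$ and finishes the proof. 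The one point that needs genuine care is the construction of the $\ell$-separated sub-blocks together with the telescoping application of the mixing inequality — this is exactly where the assumptions $\ell = o(m)$ and $(m/\ell)\,\alpha(\ell) = o(1)$ enter; everything downstream of the displayed lower bound on $F_{m,j}$ is elementary.
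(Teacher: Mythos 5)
Your argument is correct and follows essentially the same route as the paper's proof: subdivide the length-$m$ block into $\ell$-separated sub-blocks of length $\ell$ (the paper keeps the odd-indexed consecutive sub-blocks, you insert explicit gaps — the same construction), telescope the $\alpha$-mixing inequality to bound the joint probability by the product plus $O((m/\ell)\alpha(\ell))$, and conclude by elementary analysis from $F_{\ell,j}(x_u)^r \ge u + o(1)$ with $r \asymp m/\ell$. Your use of the quantile $x_u$ and the explicit bound $1 - a^{1/r} \le r^{-1}\log(1/a)$ in place of the paper's probability-integral-transform phrasing and $\limsup$ conclusion is only a cosmetic difference.
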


\begin{proof}
The result is univariate, so we suppress the index $j$ from the notation. Consider the maxima of consecutive subblocks of size $\ell$ contained within the first block of length $m$:
\[
  M_{\ell,1}, \ldots, M_{\ell,\floor{m/\ell}}.
\]
Of these blocks, only keep the ones with an odd index. Since the distribution of $M_{m,1}$ is continuous (all variables $X_t$ having a continuous distribution), we find
\[
  0 < u 
  = \Pr \{ F_m( M_{m,1} ) \le u \}
  \le \Pr \biggl\{ \max_{\substack{1 \le i \le \floor{m/\ell} \\ \text{$i$ is odd}}} F_m( M_{\ell,i} ) \le u \biggr\}.
\]
The odd blocks are separated by a lag $\ell$. Therefore, by induction,
\[
  \biggl|
    \Pr \biggl\{ \max_{\substack{1 \le i \le \floor{m/\ell} \\ \text{$i$ is odd}}} F_m( M_{\ell,i} ) \le u \biggr\}
    -
    \prod_{\substack{1 \le i \le \floor{m/\ell} \\ \text{$i$ is odd}}} \Pr \{ F_m( M_{\ell,i} ) \le u \}
  \biggr|
  \le
  (m/\ell) \, \alpha(\ell).
\]
The number of indices $i$ in the product is at least equal to $\floor{m/\ell}/2$. By stationarity, we obtain
\[
  [1 - \Pr \{ F_m( M_{\ell,1} ) > u \}]^{\floor{m/\ell}/2} \ge u + o(1), \qquad n \to \infty.
\]
But $m/\ell \to \infty$, and thus
\[
  \limsup_{n \to \infty} \frac{m}{\ell} \Pr \{ F_m( M_{\ell,1} ) > u \} < \infty,
\]
as required.
\end{proof}

\begin{proof}[Proof of Theorem~\ref{thm:empproc}]
We first consider convergence of the finite-dimensional distributions of $\Cb_{n,m}^\circ$.
Let $\ell=\ell_n\in\NN$ be some sequence of integers as in Condition~\ref{cond:alpha-beta}. In each block, we clip off smaller blocks of length $\ell$:
\begin{align*}
  M_{m,i,j}^{\tl} &= \max \{ X_{t,j} : t \in (im-m, im-\ell] \cap \ZZ \} \\
  L_{m,i,j}^\tl &= \max \{ X_{t,j} : t \in (im-\ell, im] \cap \ZZ \}.
\end{align*}
Clearly, 
%for any $\ell\in\{1, \dots, m-1\}$,
$M_{m,i,j}=\max\{ M_{m,i,j}^\tl, L_{m,i,j}^\tl \}$. Since $\ell = o(m)$, the pieces clipped off, $L_{m,i,j}^\tl$, can be expected to be small. On the other hand, since $\ell \to \infty$,  the clipped blocks $M_{m,1,j}^\tl,\ldots,M_{m,k,j}^\tl$ should be approximately independent. Set
\begin{align*}
  U_{m,i,j}^\tl &= F_{m,j}( M_{m,i,j}^\tl ), &
  \bm{U}_{m,i}^\tl &= (U_{m,i,1}^\tl, \ldots, U_{m,i,d}^\tl),
\end{align*}
%\ab{maybe better: $U_{m,i,j}^\tl  = F_{m-\ell,j}( M_{m,i,j}^\tl )$?}
and define 
\[
  \hat C_{n,m}^{\circ, \tl} ( \bm u) 
  =
  \frac{1}{k} \sum_{i=1}^k \1 ( \bm U_{m,i}^\tl \le \bm{u} ), \qquad 
%  \Cb_{n,m}^{\circ, \tl}(\bm u) =  \sqrt k \{ \hat C_{n,m}^{\circ, \tl} ( \bm u)  - C_{m}(\bm u) \}.
  C_m^{\tl}(\bm u) = \Pr(\bm U_{m,1}^\tl \le \bm u)
\]
and  
$
\Cb_{n,m}^{\circ, \tl}(\bm u) =  \sqrt k \{ \hat C_{n,m}^{\circ, \tl} ( \bm u)  - C_{m}^{\tl}(\bm u) \}.
$
Note that $C_m^{\tl}$ is not a copula in general.
First, we are going to show that, for any $\bm u\in[0,1]^d$,
\begin{align} 
\label{eq:clip}
  | \Cb_{n,m}^{\circ}(\bm u) - \Cb_{n,m}^{\circ, \tl}(\bm u) | = o_P(1), \qquad n \to \infty.  
%  | C_{m-l}(\bm u)  - C_m(\bm u) | = o(k^{-1/2}).
\end{align}
We have
\begin{multline}
\label{eq:CblCb}
  (\Cb_{n,m}^{\circ, \tl} - \Cb_{n,m}^\circ)(\bm u) \\
  = \frac{1}{\sqrt{k}} \sum_{i=1}^k
  [
    \{ \1 ( \bm{U}_{m,i}^\tl \le \bm{u} ) - \1 ( \bm U_{m,i} \le \bm{u} ) \}
    -
    \{ C_m^\tl( \bm{u} ) - C_m( \bm{u} ) \}
  ].
\end{multline}
By the definitions of $C_m$ and $C_{m}^\tl$, the previous expression is centered. Hence it suffices to show that its variance converges to zero. Write
\[
  \Delta_{m,i}^\tl
  = \1 ( \bm{U}_{m,i}^\tl \le \bm{u} ) - \1 ( \bm U_{m,i} \le \bm{u} ).
\]
Since $\bm{U}_{m,i}^\tl \le \bm{U}_{m,i}$ componentwise, we have
\[
  \Delta_{m,i}^\tl
  = \1 ( \bm{U}_{m,i}^\tl \le \bm{u} \not\le \bm{U}_{m,i} ).
\]
By stationarity,
\begin{align*}
  \Var[ (\Cb_{n,m}^\circ - \Cb_{n,m}^{\circ, \tl})(\bm u) ]
  &= \Var\Big[ - \frac{1}{\sqrt{k}} \sum_{i=1}^k \Delta_{m,i}^\tl \Big] \\
  &= \frac{1}{k} \sum_{i_1=1}^k \sum_{i_2=1}^k 
  \Cov[ \Delta_{m,i_1}^\tl, \, \Delta_{m,i_2}^\tl ] \\
  &= \Var[ \Delta_{m,1}^\tl ]
  + 2 \sum_{h = 1}^{k-1} (1 - h/k) \, \Cov[ \Delta_{m,1}^\tl, \, \Delta_{m,h+1}^\tl ]
\end{align*}
Since $\Delta_{m,i}^\tl$ is an indicator variable, its variance is bounded by its expectation. We find, taking the term $h = 1$ out of the sum, 
\[
  \Var[ (\Cb_{n,m}^\circ - \Cb_{n,m}^{\circ, \tl})(\bm u) ]
  \le 3 \expec[ \Delta_{m,1}^\tl ] + 2k \, \alpha( m ).
\]
The second term on the right-hand side converges to $0$ by assumption (iii). 
Moreover,
\begin{align*}
     \Delta_{m,i}^\tl
  &= \left| \1( \bm U_{m,i}^\tl \le \bm u) - \1( \bm U_{m,i} \le \bm u) \right|  \\
  & = 
  \left| \prod\nolimits_{j=1}^d \1(U_{m,i,j}^\tl \le u_j) - \prod\nolimits_{j=1}^d \1( U_{m,i,j} \le u_j) \right| \\
  &  \le
  \sum_{j=1}^d \left| \1(U_{m,i,j}^\tl \le u_j) - \1( U_{m,i,j} \le u_j) \right| 
  = 
  \sum_{j=1}^d \1(U_{m,i,j}^\tl \le u_j < U_{m,i,j}) \\
%   \approx
%   \sum_{j=1}^d \1(M_{m,i,j}^\tl \le F_{m,j}^{-1}(u_j) < M_{m,i,j}) 
%   = 
%   \sum_{j=1}^d \1(L_{m,i,j}^\tl >  F_{m,j}^{-1}(u_j) )    
  & \le
  \sum_{j=1}^d \1\{ F_{m,j}(L_{m,i,j}^\tl ) > u_j \}.
\end{align*}
Therefore, 
$  \expec[\Delta_{m,i}^\tl ]$ can be bounded by
\[
  \sum_{j=1}^d  \Pr\{ F_{m,j}(L_{m,i,j}^\tl)  >  u_j  \} 
  =
  O( \ell / m )
  =
  o(1),
\]
in view of Lemma~\ref{lem:elm} and Condition~\ref{cond:alpha-beta}(ii)--(iii); note that $L_{m,i,j}^\tl$ is equal in distribution to $M_{\ell,1,j}$. The assertion in~\eqref{eq:clip} is proved.

Now, let $\bm u_1, \dots, \bm u_q$ be a finite collection of vectors in $[0,1]^d$. For the fidi-part of the proof, we have to show that
\[
  (\Cb_{n,m}^\circ(\bm u_1) , \dots, \Cb_{n,m}^\circ(\bm u_q) )
  \weak
  (\Cb^\circ(\bm u_1), \dots, \Cb^\circ(\bm u_q) ), 
\]
which, by \eqref{eq:clip}, follows if we prove that
\[
  (\Cb_{n,m}^{\circ,\tl}(\bm u_1) , \dots, \Cb_{n,m}^{\circ, \tl}(\bm u_q) )
  \weak
  (\Cb^\circ(\bm u_1), \dots, \Cb^\circ(\bm u_q) ).
\]
By the Cram\'er--Wold device, the latter is equivalent to 
\[
  Z_n = \sum_{\nu=1}^q c_\nu \Cb_{n,m}^{\circ,\tl}(\bm u_\nu) 
  \weak
  \sum_{\nu=1}^q c_\nu \Cb^{\circ} (\bm u_\nu)  =Z,
\]
for any $c_1 \dots, c_q\in\RR$. Write $Z_n= \sum_{i=1}^k Z_{i,n}$, where
\[
  Z_{i,n} = \frac{1}{\sqrt{k}} 
  \sum_{\nu=1}^q c_\nu 
  \{ \1 ( \bm U_{m,i}^\tl \le \bm u_\nu ) - C_m^\tl(\bm u_\nu) \}.
\]
For $t\in\RR$, let $\psi_{i,n}(t) = \exp(-\mathrm{i}tZ_{i,n})$ with $\mathrm{i}$ the imaginary unit. Note that the characteristic function of $Z_n$ can be written as 
 $t \mapsto \expec \left\{ \prod_{i=1}^{k} \psi_{i,n}(t) \right\}$. Now, for any $t \in \RR$,  we can write
\begin{align*}
  & \left| \expec \left\{ \prod\nolimits_{i=1}^{k} \psi_{i,n}(t) \right\} 
    - \prod\nolimits_{i=1}^{k} \expec \{ \psi_{i,n}(t) \} \right|  \\
  & \hspace{.8cm} \leq \left| \expec\left\{ \prod\nolimits_{i=1}^{k} \psi_{i,n}(t) \right\}  
      - \expec \{ \psi_{1,n}(t) \} \expec \left\{ \prod\nolimits_{i=2}^{k} \psi_{i,n}(t) \right\} \right| \\ 
  & \hspace{.8cm} + \left| \expec \{ \psi_{1,n}(t) \} \right| \left| \expec\left\{ \prod\nolimits_{i=2}^{k_n} \psi_{i,n}(t) \right\} 
    - \expec \{ \psi_{2,n}(t) \} \expec \left\{ \prod\nolimits_{i=3}^{k} \psi_{i,n}(t) \right\} \right| \\
  & \hspace{.8cm} + \ldots  \\ 
  & \hspace{.8cm} + \left| \prod\nolimits_{i=1}^{k-2} \expec \{ \psi_{i,n}(t) \} \right| 
    \left| \expec \left\{ \prod\nolimits_{i=k - 1}^{k}  \psi_{i,n}(t) \right\} 
    - \prod\nolimits_{i=k - 1}^{k} \expec \{ \psi_{i,n}(t) \} \right|.
\end{align*}
Applying $k-1$ times Lemma~3.9 of \cite{DehPhi02}, we obtain 
\begin{multline*}
  \left| \expec\left\{ \prod\nolimits_{i=1}^{k} \psi_{i,n}(t) \right\} - \prod\nolimits_{i=1}^{k} \expec \{ \psi_{i,n}(t) \} \right| \\
  \leq 
  2 \pi k \max_{1 \leq i \leq k} 
      \alpha \left( 
	\sigma \left\{ \psi_{i,n}(t) \right\}, 
	\sigma \left\{ \prod\nolimits_{i'=i+1}^{k}  \psi_{i',n}(t) \right\} 
      \right).
\end{multline*}
Since the maxima $M_{m,i,j}^\tl$ over different blocks $i\ne i'$ are based on observations that are at least $\ell$ observations apart, the right-hand side of the last display is of the order $k \, \alpha(\ell)$. The latter converges to zero by Condition~\ref{cond:alpha-beta}. Hence, we have shown that the finite-dimensional distributions of $\Cb_{n,m}^{\circ, \tl}$ 
%weakly converge to 
show the same asymptotic behavior (with respect to weak convergence)
as those of the process 
\[
  \tilde \Cb_{n,m}^{\circ, \tl}(\bm u) 
  =  
  \sqrt k \{ \tilde{C}_{n,m}^{\circ, \tl} ( \bm u)  - C_{m}^\tl(\bm u) \},
\]
with
\[
  \tilde C_{n,m}^{\circ, \tl} ( \bm u) 
  =
  \frac{1}{k} \sum_{i=1}^k \1 ( \tilde{\bm U}_{m,i}^\tl \le \bm{u} )
\]
and where 
\[
  \tilde{\bm{U}}_{m,i}^\tl = (\tilde U_{m,i,1}, \ldots, \tilde U_{m,i,d}), \qquad
  \tilde{U}_{m,i,j}^\tl = F_{m,j}( \tilde{M}_{m,i,j}^\tl )
\] 
and $\tilde{M}_{m,i,j}^\tl = \max \{ \tilde X_{t,j} : t \in (im-m, im-\ell] \cap \ZZ \}$ is based on a sequence $\tilde{\bm X}_{1}, \dots, \tilde{\bm X}_n$ such that 
\begin{multline*}
  (\tilde{\bm X}_1, \dots, \tilde{\bm X}_m), 
  (\tilde{\bm X}_{m+1}, \dots, \tilde{\bm X}_{2m}), 
  \dots, \\
  (\tilde{\bm X}_{(k-1)m+1}, \dots, \tilde{\bm X}_{km}), 
  (\tilde{\bm X}_{km+1}, \dots, \tilde{\bm X}_{n})
\end{multline*}
are independent and such that each of the $(k+1)$ brackets is equal in law to the same bracket in the original sequence without the tilde. Again applying~\eqref{eq:clip} (which is also valid for the corresponding tilde version based on independent blocks; the proof is even simpler), we obtain that the fidis of $\tilde \Cb_{n,m}^{\circ, \tl}$ converge to the same limit as those of $\tilde \Cb_{n,m}^{\circ}$, which is defined analogously to $\Cb_{n,m}^\circ$, but with ${\bm X}_{1}, \dots, {\bm X}_n$ replaced by $\tilde{\bm X}_{1}, \dots, \tilde{\bm X}_n$.
Assembling everything, the fidis of $\Cb_{n,m}^\circ$ asymptotically behave as those of $\tilde \Cb_{n,m}^\circ$ based on independent blocks. Weak convergence of the latter can easily be deduced from the classical central limit theorem for row-wise independent triangular arrays.

Now, let us prove asymptotic tightness of $\Cb_{n,m}^\circ$. Recall $\beta(n)$ in \eqref{eq:alpha:beta}. By Berbee's coupling Lemma \citep{Ber79, DouMasRio95}, we can construct inductively a sequence 
$
  (\bar{\bm X}_{im+1}, \dots, \bar{\bm X}_{im+m})_{i\ge 0}
$
such that the following three properties hold:
\begin{enumerate}[(i)]
  \item $(\bar{\bm X}_{im+1}, \dots, \bar{\bm X}_{im+m}) \stackrel{d}{=} ({\bm X}_{im+1}, \dots, {\bm X}_{im+m})$ for any $i\ge 0$;
  \item both $(\bar{\bm X}_{2im+1}, \dots, \bar{\bm X}_{2im+m})_{i\ge 0}$ and 
  $(\bar{\bm X}_{(2i+1)m+1}, \dots, \bar{\bm X}_{(2i+1)m+m})_{i\ge0}$ are i.i.d.\ sequences;
  \item $\Pr \{ (\bar{\bm X}_{im+1}, \dots, \bar{\bm X}_{im+m})  \ne ({\bm X}_{im+1}, \dots, {\bm X}_{im+m}) \} \le \beta(m)$.
\end{enumerate}
Let $\bar \Cb_{n,m}^{\circ}$ and $\bar{\bm U}_{m,i}$  be defined analogously to $\Cb_{n,m}^\circ$ and $\bm U_{m,i}$, respectively, but with ${\bm X}_{1}, \dots, {\bm X}_n$ replaced by $\bar{\bm X}_{1}, \dots, \bar{\bm X}_n$. Now, write
\begin{align}
\label{eq:deco-berbe}
  \Cb_{n,m}^{\circ}(\bm u)  
  = 
  \bar \Cb_{n,m}^{\circ}(\bm u)  
  + \{ \Cb_{n,m}^{\circ}(\bm u)  - \bar \Cb_{n,m}^{\circ}(\bm u) \}.
\end{align}
We will show below that the term in brackets on the right-hand side is $o_P(1)$, uniformly in $\bm u\in[0,1]^d$. Then, in order to show asymptotic tightness of $\Cb_{n,m}^{\circ}$, it suffices to show that $\bar \Cb_{n,m}^{\circ}$ is asymptotically tight. 
Write $\bar \Cb_{n,m}^{\circ}=\bar \Cb_{n,m}^{\circ,\mathit{even}}+\bar \Cb_{n,m}^{\circ,\mathit{odd}}$, where
 $\bar \Cb_{n,m}^{\circ,\mathit{even}}$ and $\bar \Cb_{n,m}^{\circ,\mathit{odd}}$ are defined as sums over the even and odd summands of $\bar \Cb_{n,m}^{\circ}$, respectively. Since both of these sums are based on independent summands by property~(ii), they are asymptotically tight by Theorem~2.11.9 in \cite{VanWel96}.

It remains to consider the term in brackets on the right-hand side of \eqref{eq:deco-berbe}. We have
\begin{align*}
  | \bar \Cb_{n,m}^{\circ}(\bm u)  -  \Cb_{n,m}^{\circ}(\bm u) | 
  &\le 
  \frac{1}{\sqrt k} \sum_{i=1}^k \left| \1( \bar{\bm U}_{m,i} \le \bm u ) - \1( \bm U_{m,i} \le \bm u) \right| \\
  &\le 
  \frac{1}{\sqrt k} \sum_{i=1}^k 
    \1 \{ (\bar{\bm X}_{im+1}, \dots, \bar{\bm X}_{im})  \ne ({\bm X}_{im+1}, \dots, {\bm X}_{im}) \}.
\end{align*}
Hence, by Markov's inequality and property~(iii), for any $\eps>0$,
\[
  \Pr\left\{ \sup_{\bm u \in [0,1]^d} | \bar \Cb_{n,m}^{\circ}(\bm u)  -  \Cb_{n,m}^{\circ}(\bm u) | > \eps \right\} 
  \le
  \frac{\sqrt k \beta(m)}{\eps}.
\]
By Condition~\ref{cond:alpha-beta}(iv), we obtain that the second summand on the right-hand side of~\eqref{eq:deco-berbe} is $o_P(1)$ as $n \to \infty$, uniformly in $\bm{u} \in [0, 1]^d$.
\end{proof}

\subsection{Proofs for Subsection~\ref{subsec:empcop}}
\label{subsec:proofs:empcop}

Proposition~\ref{prop:empcopalt} is in fact a corollary to Theorem~\ref{thm:empproc} and Lemma~\ref{lem:alt} below. We prefer to state the lemma independently of the block maxima set-up, as it might be useful in other contexts involving empirical copulas for serially dependent random vectors. To formulate the lemma, we need a bit of notation.

Let $(\bm{Y}_{k,i} = (Y_{k,i,1}, \ldots, Y_{k,i,d}): i = 1, \ldots, k)_{k \in \NN}$ be a triangular array of row-wise stationary, $d$-dimensional random vectors with continuous marginal distribution functions $G_{k,1}, \ldots, G_{k,d}$. Put $\bm{U}_{k,i} = (U_{k,i,j})_{j=1}^d$ with $U_{k,i,j} = G_{k,j}(Y_{k,i,j})$. Let 
\begin{align*}
  \hat{G}_{k,j}(y) &= \frac{1}{k} \sum_{i=1}^k \1(Y_{k,i,j} \le y),
  &
  \hat{G}_k(\bm{y}) &= \frac{1}{k} \sum_{i=1}^k \1(\bm{Y}_{k,i} \le \bm{y}),
\end{align*}
be the marginal and joint empirical distribution functions, respectively, of the sample $\bm{Y}_{k,1}, \ldots, \bm{Y}_{k,k}$. Let $\hat{\bm{U}}_{k,i} = ( \hat{U}_{k,i,j} )_{j=1}^d$ with $\hat{U}_{k,i,j} = \hat{G}_{k,j}(Y_{k,i,j})$. Finally, let $C_k$ be the copula of $\bm{Y}_{k,1}$ and consider the following empirical versions:
\begin{align*}
  \hat{C}_k^\circ( \bm{u} ) 
  &= \frac{1}{k} \sum_{i=1}^k \1 ( \bm{U}_{k,i} \le \bm{u} ), \\
  \hat{C}_k( \bm{u} ) 
  &= \frac{1}{k} \sum_{i=1}^k \1 ( \hat{\bm{U}}_{k,i} \le \bm{u} ), \\
  \hat{C}_k^{\mathrm{alt}}( \bm{u} ) 
  &= \hat{G}_k 
  \bigl( 
    \hat{G}_{k,1}^\leftarrow(u_1), \ldots, \hat{G}_{k,d}^\leftarrow(u_d) 
  \bigr).
\end{align*}

\begin{lemma}
\label{lem:alt}
Consider the set-up in the previous paragraph. If $\sqrt{k} ( \hat{C}_k^\circ - C_k )$ converges weakly in $\ell^\infty([0, 1]^d)$ to a stochastic process with continuous trajectories, then
\[
  \sup_{\bm{u} \in [0, 1]^d} 
  \bigl|
    \hat{C}_k^{\mathrm{alt}}( \bm{u} ) - \hat{C}_k(\bm{u})
  \bigr|
  = o_p( 1/\sqrt{k} ).
\]
\end{lemma}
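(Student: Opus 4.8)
\textbf{Proof proposal for Lemma~\ref{lem:alt}.}
The plan is to exploit the fact that $\hat C_k$ and $\hat C_k^{\mathrm{alt}}$ differ only through the way the empirical margins are inverted, so the whole difference lives on the ``ties'' of the empirical marginal distribution functions. First I would record the elementary comparison inequality: for every $\bm u\in[0,1]^d$,
\[
  \hat C_k\bigl(\hat G_{k,1}(\hat G_{k,1}^\leftarrow(u_1)-),\dots\bigr)
  \;\le\;
  \hat C_k^{\mathrm{alt}}(\bm u)
  \;=\;
  \hat G_k\bigl(\hat G_{k,1}^\leftarrow(u_1),\dots,\hat G_{k,d}^\leftarrow(u_d)\bigr)
  \;\le\;
  \hat C_k\bigl(\hat G_{k,1}(\hat G_{k,1}^\leftarrow(u_1)),\dots\bigr),
\]
together with the fact that, by definition of the generalized inverse, $\hat G_{k,j}(\hat G_{k,j}^\leftarrow(u_j))\ge u_j$ while $\hat G_{k,j}(\hat G_{k,j}^\leftarrow(u_j)-)\le u_j$, and that $\hat G_{k,j}$ jumps by multiples of $1/k$. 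Hence the argument fed into $\hat C_k$ in the upper and lower bounds differs coordinatewise by at most the size of a single atom of $\hat G_{k,j}$, i.e.\ by $N_{k,j}/k$ where $N_{k,j}$ is the maximal multiplicity of a tie among $Y_{k,1,j},\dots,Y_{k,k,j}$. Using monotonicity of $\hat C_k$ in each argument, it will then suffice to control, uniformly in $\bm u$,
\[
  \hat C_k\bigl(\tfrac{\lceil k u_1\rceil}{k},\dots,\tfrac{\lceil k u_d\rceil}{k}\bigr)
  -\hat C_k\bigl(\tfrac{\lceil k u_1\rceil - N_{k,1}}{k},\dots,\tfrac{\lceil k u_d\rceil - N_{k,d}}{k}\bigr)
  \;=\;o_p(1/\sqrt k),
\]
again after replacing $\hat C_k$ by the empirical copula evaluated at grid points.

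Next I would bound this increment by a telescoping sum over the $d$ coordinates, reducing to a one-coordinate statement of the form
\[
  \sup_{\bm v\in[0,1]^d}\;\bigl|\hat C_k(\bm v)-\hat C_k(\bm v - h_{k,j}\bm e_j)\bigr|,
  \qquad h_{k,j}=N_{k,j}/k,
\]
and then pass from $\hat C_k$ to $\hat C_k^\circ$, which only costs another $O_p(1/k)$ almost surely (the standard bound $\|\hat C_k - \hat C_k^\circ\|_\infty \le d/k$, which holds pathwise and whose proof does not use independence). So everything comes down to the modulus of continuity of $\sqrt k(\hat C_k^\circ - C_k)$ along the $j$th coordinate over increments of length $h_{k,j}$, plus the Lipschitz continuity of $C_k$ (its margins are uniform, so $|C_k(\bm v) - C_k(\bm v - h\bm e_j)|\le h$). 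The hypothesis that $\sqrt k(\hat C_k^\circ - C_k)$ converges weakly in $\ell^\infty([0,1]^d)$ to a process with continuous paths gives, by the portmanteau/asymptotic equicontinuity characterization of weak convergence in $\ell^\infty$, exactly such a modulus-of-continuity control: for every $\eta>0$,
\[
  \lim_{\delta\downarrow 0}\;\limsup_{k\to\infty}\;
  \Pr\Bigl\{\sup_{\|\bm v-\bm w\|\le\delta}\bigl|\sqrt k(\hat C_k^\circ-C_k)(\bm v)-\sqrt k(\hat C_k^\circ-C_k)(\bm w)\bigr|>\eta\Bigr\}=0,
\]
so the increment is $o_p(1)$ as soon as $h_{k,j}\to 0$ in probability.

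Thus the crux, and the step I expect to be the main obstacle, is showing $N_{k,j}=o_p(k)$, i.e.\ that the largest tie among the $k$ values in row $k$ of coordinate $j$ is of smaller order than the sample size. Since the $G_{k,j}$ are continuous, ties occur only through exact repetitions of the underlying $Y$'s — which, in the application to block maxima, genuinely happen with positive probability (e.g.\ the random-repetition process), so this is not vacuous. I would argue as follows: if $N_{k,j}$ were of order $k$ along a subsequence with positive probability, then $\hat G_{k,j}$ would have a jump of size bounded away from $0$ at some point, and at that point $\sqrt k(\hat C_k^\circ - C_k)(\bm u^{(j)})$ — with $\bm u^{(j)}=(1,\dots,1,u_j,1,\dots,1)$ — would itself have a jump of order $\sqrt k$ in the limiting variable's coordinate, contradicting the assumed continuity of the sample paths of the weak limit. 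Made rigorous, this says: the weak-convergence hypothesis for the one-dimensional marginal processes $u_j\mapsto \sqrt k(\hat G_{k,j}(G_{k,j}^\leftarrow(u_j)) - u_j)$ forces $N_{k,j}/\sqrt k = o_p(1)$, hence in particular $h_{k,j}=N_{k,j}/k = o_p(1/\sqrt k)$, which is even stronger than what the modulus-of-continuity step requires and directly closes the argument. Assembling the telescoping bound, the $d/k$ comparison, the Lipschitz bound on $C_k$, and the equicontinuity of the limit then yields $\sup_{\bm u}|\hat C_k^{\mathrm{alt}}(\bm u)-\hat C_k(\bm u)| = o_p(1/\sqrt k)$.
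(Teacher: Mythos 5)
There is one concretely false step in your chain: the claimed ``standard bound'' $\sup_{\bm u}|\hat C_k(\bm u)-\hat C_k^\circ(\bm u)|\le d/k$. This is not true, even pathwise and even without serial dependence or ties. Already for $d=1$ and continuous margins with no ties, $\hat C_k(u)=\lfloor ku\rfloor/k$ is deterministic, whereas $\hat C_k^\circ(u)=k^{-1}\sum_i \1(U_{k,i,1}\le u)$ is the empirical cdf of the uniforms, so $\sup_u|\hat C_k(u)-\hat C_k^\circ(u)|$ is essentially the Kolmogorov--Smirnov statistic and is of exact order $O_p(1/\sqrt k)$, not $O(1/k)$; the discrepancy is driven by the marginal empirical processes. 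The genuine $d/k$ bound from the i.i.d.\ no-ties setting compares $\hat C_k$ with $\hat C_k^{\mathrm{alt}}$ --- which is precisely the quantity this lemma is about, so invoking it here is circular. As a consequence, your reduction of the increment of $\hat C_k$ over boxes of side $N_{k,j}/k$ to the modulus of continuity of $\sqrt k(\hat C_k^\circ-C_k)$ does not go through as written: a sup-norm swap of $\hat C_k$ for $\hat C_k^\circ$ costs $O_p(1/\sqrt k)$, which is exactly the order you are trying to beat.

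The rest of your architecture is sound and, in fact, rests on the same mechanism as the paper: the sandwich via $\hat G_{k,j}(\hat G_{k,j}^\leftarrow(u_j)-)\le u_j\le \hat G_{k,j}(\hat G_{k,j}^\leftarrow(u_j))$ is correct, and your final point --- that weak convergence to a continuous-path limit forces the largest marginal jump, i.e.\ the largest tie fraction $N_{k,j}/k$, to be $o_p(1/\sqrt k)$ --- is exactly the input the paper uses (phrased there through the modulus of continuity $\omega_k(1/k)$ of $\sqrt k(\hat C_k^\circ-C_k)$, since the centering $C_{k,j}$ is the identity). The gap is repairable without the false comparison: because the pseudo-observations $\hat G_{k,j}(Y_{k,i,j})$ sit on the grid $\{1/k,\dots,1\}$ with gaps only at ties, a one-coordinate increment of $\hat C_k$ over an interval of length $h$ is bounded pathwise by $h+N_{k,j}/k$; taking $h=N_{k,j}/k=o_p(1/\sqrt k)$ and telescoping over coordinates closes your argument directly, with no appeal to $\hat C_k^\circ$ at that stage. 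The paper's own proof is shorter still: it bounds $|\hat C_k^{\mathrm{alt}}(\bm u)-\hat C_k(\bm u)|$ pointwise by $\sum_{j=1}^d k^{-1}\sum_{i=1}^k \1\{Y_{k,i,j}=\hat G_{k,j}^\leftarrow(u_j)\}$ through an indicator computation in which the strict-inequality parts cancel, then bounds this tie mass by $\sup_x\{\hat C_{k,j}^\circ(x)-\hat C_{k,j}^\circ(x-1/k)\}\le k^{-1/2}\omega_k(1/k)+1/k$ and concludes by asymptotic equicontinuity --- the same continuity-of-the-limit input you use, applied at scale $1/k$ rather than at the scale of the largest tie.
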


\begin{proof}
We have
\begin{align*}
%   \lefteqn{
  \bigl|
    \hat{C}_k^{\mathrm{alt}}( \bm{u} )
    -
    \hat{C}_k( \bm{u} )
  \bigr|
%   } \\
  &\stackrel{(1)}{\le}
  \sum_{j=1}^d \frac{1}{k} \sum_{i=1}^k
  \bigl|
    \1 \{ Y_{k,i,j} \le \hat{G}_{k,j}^\leftarrow( u_j ) \}
    -
    \1 \{ \hat{G}_{k,j}(Y_{k,i,j}) \le u_j \}
  \bigr| \\
  &\stackrel{(2)}{=}
  \sum_{j=1}^d \frac{1}{k} \sum_{i=1}^k
  \bigl|
    \1 \{ Y_{k,i,j} = \hat{G}_{k,j}^\leftarrow( u_j ) \}
    -
    \1 \{ \hat{G}_{k,j}(Y_{k,i,j}) = u_j \}
  \bigr| \\
  &\stackrel{(3)}{\le}
  \sum_{j=1}^d \frac{1}{k} \sum_{i=1}^k
  \1 \{ Y_{k,i,j} = \hat{G}_{k,j}^\leftarrow( u_j ) \}.
\end{align*}
Explanations:
\begin{compactitem}
\item[(1)]
Write out the definitions of the two versions of the empirical copula and use the inequality $|\prod_j a_j - \prod_j b_j| \le \sum_j |a_j - b_j|$ for numbers $a_j, b_j \in [0, 1]$.
\item[(2)]
Split both indicators into the indicator of a strict inequality and the indicator of an equality. The indicators for the strict inequality are equal, since $x < H^\leftarrow(u)$ if and only if $H(x) < u$ for any distribution function $H$.
\item[(3)]
If $\hat{G}_{k,j}(Y_{k,i,j}) = u_j$, then $Y_{k,i,j} = \hat{G}_{k,j}^\leftarrow( u_j )$. Hence, the second indicator is not larger than the first one.
\end{compactitem}
Fix $j \in \{1, \ldots, d\}$. Let $\hat{C}_{k,j}^\circ$ be the $j$th margin of $\hat{C}_k^\circ$ in \eqref{eq:Ccirc}, that is,
\[
  \hat{C}_{k,j}^\circ( u_j ) 
  = 
  \frac{1}{k} \sum_{i=1}^k \1 ( U_{k,i,j} \le u_j ).
\]
Then we can continue the chain of (in)equalities started in the beginning of the proof by
\begin{align*}
  \bigl|
    \hat{C}_{k}^{\mathrm{alt}}( \bm{u} )
    -
    \hat{C}_{k}( \bm{u} )
  \bigr|
  &\le
  \sum_{j=1}^d 
  \frac{1}{k} \sum_{i=1}^k
  \1 \{ U_{k,i,j} = G_{k,j}(\hat{G}_{k,j}^\leftarrow( u_j )) \} \\
  &\le
  \sum_{j=1}^d 
  \sup_{x \in [0,1]}
  \frac{1}{k} \sum_{i=1}^k
  \1 \{ U_{k,i,j} = x \} \\
  &\le
  \sum_{j=1}^d 
  \sup_{x \in [0,1]}
  \{ \hat{C}_{k,j}^\circ(x) - \hat{C}_{k,j}^\circ(x - 1/k) \} \\
  &\le
  \frac{d}{\sqrt{k}} \, \omega_k( 1/k )
  + \frac{d}{k}
\end{align*}
where $\omega_k$ is the modulus of continuity of $\Cb_k^\circ = \sqrt{k} ( \hat{C}_k^\circ - C_k )$, i.e.
\[
  \omega_k( \delta )
  =
  \sup_{
    \substack{
      ( \bm{x}, \bm{y} ) \in ([0, 1]^d)^2 \\
      \max_j | x_j - y_j | \le \delta
    }
  }
  \bigl|
    \Cb_k^\circ( \bm{x} ) 
    - 
    \Cb_k^\circ( \bm{y} )
  \bigr|,
\]
and where the term $d/k$ comes from the fact that $C_{k,j}$ is the identity on $[0,1]$.
As $\Cb_k^\circ$ converges weakly in $\ell^\infty( [0, 1]^d )$ to a process with continuous trajectories, it follows that $\omega_k( 1/k ) = o_p(1)$.
\end{proof}

\begin{proof}[Proof of Proposition~\ref{prop:empcopalt}]
By Theorem~\ref{thm:empproc}, we can apply Lemma~\ref{lem:alt} to $\bm{Y}_{k,i} = \bm{M}_{m,i}$.
\end{proof}

% \begin{proof}[Original proof of Theorem~\ref{thm:empcopproc} under (C1)]
% Let us first consider (C1). Let $\Dc_0 \subset \ell^\infty([0,1]^d)$ denote the space of all continuous functions that vanish at $(1, \dots, 1)$ and at all $\bm u \in [0,1]^d$ with at least one coordinate being equal to $0$. It has been shown in \cite[Theorem 2.4]{BucVol13} that $\Phi$ is Hadamard-differentiable tangentially to~$\Dc_0$ at any $C$ satisfying $S(C)$ with derivative
% \[
%   \Phi_C'(\alpha) (\bm u) = \alpha(\bm u) - \sum_{j=1}^d \dot C_{\infty, j} (\bm u) \alpha_j( u_j ), \quad \bm u \in [0,1]^d.
% \]
% Hence, if $\sqrt k(C_m - C_\infty) \to \Delta$, an application of Theorem 3.9.4 in \cite{VanWel96} allows to write
% \begin{align*}
%     \Cb_{n,m} 
%     &= \sqrt k  \{ \Phi(\hat C_{n,m}^\circ) - \Phi(C_\infty) \} + \sqrt k ( C_\infty - C_m)  \\
%     &= \Phi_{C_\infty}'  \{ \sqrt k ( \hat C_{n,m}^\circ - C_\infty ) \}  +  \sqrt k ( C_\infty - C_m) + o_P(1) \\
%     &= \Phi_{C_\infty}'  \{ \sqrt k ( \hat C_{n,m}^\circ - C_m ) \} + R_n + o_P(1)
% \end{align*}
% where $R_n = \Phi_{C_\infty}'  \{ \sqrt k ( C_m - C_\infty ) \} +  \sqrt k ( C_\infty - C_m)$. Obviously, the main term converges weakly to $\Cb$, whereas $R_n \equiv 0$ due to the fact that $ \sqrt k ( C_m - C_\infty )$ is equal to~$0$ for any $\bm u$ such that all but one coordinate of $\bm u$ is equal to~$1$. The general case follows by consideration of converging subsequences of $\sqrt k(C_m - C_\infty)$.
% \end{proof}

\begin{proof}[Proof of Theorem~\ref{thm:empcopproc}]
We are going to apply the extended continuous mapping theorem \citep[Theorem 1.11.1]{VanWel96}. Recall the copula mapping, $\Phi$, in \eqref{eq:copulaMapping}, with domain $\Dc_\Phi$. 
Let $\Dc_n$ denote the space of all $\alpha_n \in \ell^\infty([0,1]^d)$ for which $C_m + k^{-1/2} \alpha_n \in \Dc_\Phi$ and define 
\[
  g_n(\alpha_n) = \sqrt k \{ \Phi(C_m + k^{-1/2} \alpha_n) - \Phi(C_m) \}.
\]
Define
\begin{multline} \label{eq:D0}
\Dc_0 = \{ f : [0,1]^d \to \R \mid f \text{ continuous and } f(\bm u) = 0 \text{ for } \bm u=(1, \dots, 1) \\
\text{ or if at least one coordinate of } \bm u \text{ is equal to } 0 \}.
\end{multline}
%$\Dc_0 \subset \ell^\infty([0,1]^d)$ denotes the space of all continuous functions that vanish at $(1, \dots, 1)$ and at all $\bm{u} \in [0, 1]^d$ with at least one coordinate being equal to $0$. 
Since $\Cb^\circ \in \Dc_0$ and since
\[ 
  \sqrt k( \hat{C}_{n,m}^{\mathrm{alt}} - C_m) 
  = g_n \{ \sqrt k ( \hat C_{n,m}^\circ - C_m) \}, 
\]
the assertion will follow from the extended continuous mapping theorem, provided we can show that $g_n(\alpha_n) \to g(\alpha)$ for any $\alpha_n \to \alpha \in \Dc_0$, where $g:\Dc_0 \to \ell^\infty([0,1]^d)$ is defined by
\[
  (g(\alpha))(\bm{u}) 
  = \alpha(\bm{u}) 
  - \sum_{j=1}^d \dot{C}_{\infty,j}(\bm{u}) \, \alpha( \bm{u}^{(j)} ).
\]
Note that $g = \Phi_{C_\infty}'$, the Hadamard derivative of $\Phi$ at $C_\infty$.

Write $\bm{I}_n( \bm{u} ) = ( I_{n1}(u_1), \ldots, I_{nd}(u_d) )$ where
\begin{equation*} 
% \label{eq:Inj}
  I_{nj}(u_j) = ( \id_{[0,1]} + k^{-1/2}\alpha_{nj})^\leftarrow(u_j) 
\end{equation*}
with $\alpha_{nj}(u_j) = \alpha_n(1, \dots, 1, u_j, 1 \dots, 1)$ and with $\id_{[0,1]}$ the identity function on $[0, 1]$. Since 
\[
 \Phi(C_m + k^{-1/2} \alpha_n) = (C_m + k^{-1/2} \alpha_n)(\bm I_n) = C_m (\bm I_n) + k^{-1/2} \alpha_n(\bm I_n)
\] 
we can decompose
\begin{align}
\label{eq:deco_gn}
%  g_n(\alpha_n) 
%  =
%  \sqrt k \{ C_m ( \bm{I}_n )  - C_m \} 
%  +
%  \alpha_n  ( \bm{I}_n ) 
g_n(\alpha_n) 
&= 
\sqrt k \{ \Phi(C_m + k^{-1/2} \alpha_n) - \Phi(C_m) \}
=
\sqrt k \{ C_m (\bm I_n) + k^{1/2} \alpha_n (\bm I_n) - C_m \} \nonumber \\
&=
\sqrt k \{ C_m (\bm I_n ) - C_m \} + \alpha_n(\bm I_n).
\end{align}
It follows from Vervaat's Lemma, see also formula~(4.2) in \cite{BucVol13}, that 
\begin{align}
\label{eq:inv}
  \sup_{u_j \in [0,1]} 
  \bigl| 
    \sqrt k \{ I_{nj}(u_j) - u_j \} 
    + 
    \alpha( 1, \ldots, 1, u_j, 1, \ldots, 1 )
  \bigr| 
  \to 0.
\end{align}
In particular, by uniform convergence of $\alpha_n$ to $\alpha$ and by uniform continuity of $\alpha$, this implies that the second term on right-hand side of \eqref{eq:deco_gn} converges to $\alpha$, uniformly.

It remains to be shown that the first term on the right-hand side of \eqref{eq:deco_gn} converges to the proper limit, i.e., that
\begin{equation}
\label{eq:firsttermconv}
  \sup_{ \bm{u} \in [0, 1]^d } 
  \biggl| 
    \sqrt k \{ C_m ( \bm{I}_n( \bm{u} ) )  - C_m( \bm{u} ) \}
    + 
    \sum_{j=1}^d \dot{C}_{\infty,j}( \bm{u} ) \, \alpha( \bm{u}^{(j)} ) 
  \biggr|
  = 0.
\end{equation}
The proof of \eqref{eq:firsttermconv} depends on whether we assume Condition~\ref{cond:C12}(a) or (b).

First, we prove \eqref{eq:firsttermconv} under Condition~\ref{cond:C12}(a). Put
\[
  \Delta_n = \sqrt{k} ( C_m - C_\infty ).
\]
We have
\begin{multline*}
% \label{eq:firsttermdecomp}
  \sqrt k \{ C_m ( \bm{I}_n( \bm{u} ) )  - C_m(\bm{u}) \} \\
  = \sqrt k \{ C_\infty ( \bm{I}_n( \bm{u} ) ) - C_\infty(\bm{u}) \} 
  + \{ \Delta_n ( \bm{I}_n( \bm{u} ) ) - \Delta_n(\bm{u}) \}.
\end{multline*}
It is then sufficient to show that
\begin{align}
\label{eq:firsttermdecomp:a}
  \sup_{ \bm{u} \in [0, 1]^d } 
  \biggl| 
    \sqrt k \{ C_\infty ( \bm{I}_n( \bm{u} ) )  - C_\infty( \bm{u} ) \}
    + 
    \sum_{j=1}^d \dot{C}_{\infty,j}( \bm{u} ) \, \alpha( \bm{u}^{(j)} ) 
  \biggr|
  &\to 0, \\
\label{eq:firsttermdecomp:b}
  \sup_{ \bm{u} \in [0, 1]^d }
  \bigl|
    \Delta_n ( \bm{I}_n( \bm{u} ) ) - \Delta_n(\bm{u})
  \bigr|
  &\to 0.
\end{align}
Convergence in \eqref{eq:firsttermdecomp:a} essentially follows from Condition~\ref{cond:SC}, marginal convergence in \eqref{eq:inv}, the fact that $0 \le \dot{C}_{\infty,j} \le 1$, and $\alpha \in \Dc_0$ with $\Dc_0$ as defined in \eqref{eq:D0}; the proof is in fact the same as the proof for \eqref{eq:firsttermconv} under Condition~\ref{cond:C12}(b) for the special case $m = \infty$. The left-hand side of \eqref{eq:firsttermdecomp:b} is bounded by
$
  \omega ( \delta_n )
$,
where, for $\delta > 0$,
\[
  \omega ( \delta )
  =
  \sup_{n \in \NN} \sup 
  \biggl\{ 
    \bigl| \Delta_n( \bm{u} ) - \Delta_n( \bm{v} ) \bigr| : 
    (\bm{u},\bm{v}) \in ([0, 1]^d)^2, \max_{j=1,\ldots,d} |u_j-v_j| \le \delta 
  \biggr\},
\]
and with
\[
  \delta_n 
  = 
  \max_{j=1,\ldots,d} \sup_{u_j \in [0, 1]} | I_{nj}(u_j) - u_j |.
\]
By \eqref{eq:inv}, $\delta_n \to 0$ as $n \to \infty$. Since the set $\{ \Delta_n : n \in \NN \}$ is relatively compact by Condition~\ref{cond:C12}(a), the functions $\Delta_n$ are uniformly equicontinuous by the Arzel\`a--Ascoli theorem, which means that $\omega ( \delta_n ) \to 0$ as $n \to \infty$.

Next, we prove \eqref{eq:firsttermconv} under Condition~\ref{cond:C12}(b). %We need to show \eqref{eq:firsttermconv}. 
The margins of $C_m$ being uniform on $(0,1)$, the function $C_m$ is Lipschitz; more precisely,
\[
  |C_m(\bm{u}) - C_m(\bm{v})|
  \le \sum_{j=1}^d |u_j - v_j|,
  \qquad (\bm{u}, \bm{v}) \in ([0, 1]^d)^2.
\]
As a consequence, the function
%\[
%  f : [0, 1] \to [0, 1] : s \mapsto C_m \bigl( \underbrace{(1-s) \bm{u} + s \bm{I}_n(\bm{u})}_{=: \bm{v}_n( \bm{u}, s )} \bigr)
%\]
\[
  f : [0, 1] \to [0, 1] : s \mapsto C_m \bigl( \bm{v}_n( \bm{u}, s ) \bigr),
\]
where $\bm{v}_n( \bm{u}, s ) = (1-s) \bm{u} + s \bm{I}_n(\bm{u})$,
is absolutely continuous, a version of its Radon--Nikodym derivative being
\[
  f'(s) = \sum_{j=1}^d \dot{C}_{m,j} \bigl( \bm{v}_n( \bm{u}, s ) \bigr) \, \{ I_{nj}(u_j) - u_j \}.
\]
It follows that
\begin{align*}
  \sqrt k \{ C_m ( \bm{I}_n( \bm{u} ) ) - C_m( \bm{u} )  \}
  &=
  \sqrt{k} \{ f(1) - f(0) \} \\
  &= 
  \sqrt{k} \int_0^1 f'(s) \, \ds \\
  &=
  \sum_{j=1}^d \sqrt{k} \{ I_{nj}(u_j) - u_j \} \int_0^1 \dot{C}_{m,j} \bigl( \bm{v}_n( \bm{u}, s ) \bigr) \, \ds.
\end{align*}
Fix $j = 1, \ldots, d$. We need to show that
\[
  \sup_{ \bm{u} \in [0, 1]^d }
  \biggl|
    \sqrt{k} \{ I_{nj}(u_j) - u_j \} \int_0^1 \dot{C}_{m,j} \bigl( \bm{v}_n( \bm{u}, s ) \bigr) \, \ds
    + \alpha( \bm{u}^{(j)} ) \, \dot{C}_{\infty,j} (\bm{u})
  \biggr|
  \to 0
\]
as $n \to \infty$. 
%Fix $\delta \in (0, 1/2)$. In the above supremum, we consider separately the cases $u_j \in [\delta, 1-\delta]$ and $u_j \in [0, \delta] \cup [1-\delta, 1]$. 

Given $\eps > 0$, the supremum over those points $\bm{u} \in [0, 1]^d$ such that $u_j \in [0, \delta] \cup [1-\delta, 1]$ can be made smaller than $\eps$ for sufficiently large $n$ by choosing $\delta$ sufficiently small, using the fact that $0 \le \dot{C}_{m,j} \le 1$ and $0 \le \dot{C}_{\infty,j} \le 1$ together with uniform convergence in \eqref{eq:inv} and the assumption that $\alpha \in \Dc_0$. 
In the following, fix such a $\delta$.

Regarding the supremum over $\bm{u} \in [0, 1]^d$ such that $u_j \in [\delta, 1-\delta]$, note that
\begin{align*}
  \int_0^1 \dot{C}_{m,j} \bigl( \bm{v}_n( \bm{u}, s ) \bigr) \, \ds
  &=
  \dot{C}_{\infty,j}( \bm{u} ) \\
  &\mbox{} \quad +
  \int_0^1 \bigl\{ \dot{C}_{\infty,j} \bigl( \bm{v}_n( \bm{u}, s ) \bigr) - \dot{C}_{\infty,j}( \bm{u} ) \bigr\} \, \ds \\
  &\mbox{} \quad +
  \int_0^1 
    \bigl\{ 
      \dot{C}_{m,j} \bigl( \bm{v}_n( \bm{u}, s ) \bigr) 
      - 
      \dot{C}_{\infty,j} \bigl( \bm{v}_n( \bm{u}, s ) \bigr) 
    \bigr\} \, 
  \ds.
\end{align*}
All integrands on the right-hand side converging to zero uniformly over $s \in [0, 1]$ and $\bm{u} \in [0, 1]^d$ such that $u_j \in [\delta, 1-\delta]$, the proof is complete.
\end{proof}

\begin{proof}[Proof of Corollary~\ref{cor:empcopproc}]
As convergence in \eqref{eq:Gamma} implies relative compactness, Condition~\ref{cond:C12}(a) is fulfilled. By Theorem~\ref{thm:empcopproc} and Slutsky's lemma,
\[
  \sqrt{k} ( \hat{C}_{n,m} - C_\infty)
  =
  \Cb_{n,m} + \sqrt{k} ( C_m - C_\infty )
  \weak
  \Cb + \Gamma, \qquad n \to \infty,
\]
as required.
\end{proof}

\subsection{Proofs for Subsection~\ref{subsec:minidi}}
\label{subsec:proofs:minidi}

For the proof of Theorem~\ref{thm:minidi}, we introduce the notation
\begin{align*}
  \Wb_{n, \omega} \left( \bm t \right) 
  =  \sqrt{k}
  \int_0^1 \log \left\{ \frac{\tilde{C}_{n,m} ( \bm y^{ \bm t} ) }
    {C_\infty ( \bm y^{ \bm t} ) } \right\} \omega \left( y,\bm  t  \right) \dy,
\end{align*}
for a measurable weight function $\omega : \left[ 0,1 \right] \times \Delta_{d-1} \to \R $ that may depend on $y$ and $\bm t$. 

\begin{theorem} \label{theo:winf}
Suppose that Conditions~\ref{cond:Cinfty}, \ref{cond:alpha-beta} and~\ref{cond:SC} are met and that $\sqrt k(C_m - C_\infty) \to \Gamma$, uniformly. Assume that there exists a bounded, measurable function $\overline{\omega}: \left[ 0,1 \right] \rightarrow \RR$ such that $\vert \omega \left( y, \bm t \right) \vert \le \overline{\omega} \left( y \right)$ for all $y\in[0,1]$ and all $\bm t \in \Delta_{d-1}$ and such that
\begin{align*}
  \int_0^1  \overline{\omega} \left( y \right) y^{-\lambda} \dy < \infty \text{ for some }  \lambda >1.
\end{align*}
Then, for any 
$\gamma \in \left( \frac{1}{2}, \frac{\lambda}{2} \right)$,
\begin{align*}
  \Wb_{n, \omega} \weak \Wb_{\infty,\omega} 
  ~\text{ in }~ \ell^{\infty} \left( \Delta_{d-1} \right) ,
\end{align*}
as $n \to \infty$,
where  the limiting process is given by
\[
  \Wb_{\infty,\omega} \left( \bm t \right)
  = \int_0^1 \frac{\Cb (\bm y^{ \bm t} ) + \Gamma(\bm y^{ \bm t})}
  {C_\infty ( \bm y^{ \bm t} ) } \, \omega \left( y, \bm t \right) \dy .
\]
\end{theorem}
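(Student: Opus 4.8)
The plan is to linearise the logarithm in the integrand of $\Wb_{n,\omega}$ and to identify the leading term as a continuous linear functional of the block maxima empirical copula process, whose limit is already known. Since $\sqrt k(C_m-C_\infty)\to\Gamma$ uniformly, the sequence $\sqrt k(C_m-C_\infty)$ is relatively compact, so Condition~\ref{cond:C12}(a) holds and Corollary~\ref{cor:empcopproc} applies: with $g_n:=\sqrt k(\hat C_{n,m}-C_\infty)$ we have $g_n\weak\Cb+\Gamma$ in $\ell^\infty([0,1]^d)$, and in particular $\|g_n\|_\infty=O_p(1)$. The other ingredient I would record at the outset is that, by the Pickands representation~\eqref{eq:Pickands} and the fact that the exponents in $\bm{y}^{\bm{t}}$ sum to one, $C_\infty(\bm{y}^{\bm{t}})=y^{A_\infty(\bm t)}$ with $A_\infty(\bm t)\in[1/d,1]$; hence $y\le C_\infty(\bm{y}^{\bm{t}})\le y^{1/d}$ uniformly in $\bm t\in\Delta_{d-1}$, which controls all the singular integrals below.

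I would then decompose
\[
  \Wb_{n,\omega}(\bm t)
  = \int_0^1 \frac{g_n(\bm{y}^{\bm{t}})}{C_\infty(\bm{y}^{\bm{t}})}\,\omega(y,\bm t)\,\dy
    + R_n(\bm t),
\]
where $R_n(\bm t)$ gathers two error contributions: on the ``bulk'' region of those $y$ for which $\tilde C_{n,m}(\bm{y}^{\bm{t}})=\hat C_{n,m}(\bm{y}^{\bm{t}})$ and $\rho_n(\bm{y}^{\bm{t}}):=g_n(\bm{y}^{\bm{t}})/\{\sqrt k\,C_\infty(\bm{y}^{\bm{t}})\}$ satisfies $|\rho_n|\le 1/2$, the error is the second-order term $\sqrt k\{\log(1+\rho_n)-\rho_n\}$ of the logarithmic expansion; on the complementary region, a neighbourhood of $y=0$ (of length a random multiple of $k^{-1/2}$, using the two-sided bound on $C_\infty$ and $\|g_n\|_\infty\le M$), where either the truncation at level $k^{-\gamma}$ is active or $|\rho_n|$ is not small, $R_n$ simply absorbs the whole integrand. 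The leading term equals $T(g_n)(\bm t)$ for the map
\[
  T:\; f\;\longmapsto\;\Bigl(\bm t\mapsto \int_0^1 \frac{f(\bm{y}^{\bm{t}})}{C_\infty(\bm{y}^{\bm{t}})}\,\omega(y,\bm t)\,\dy\Bigr),
\]
which is bounded and linear, hence continuous, from $\ell^\infty([0,1]^d)$ to $\ell^\infty(\Delta_{d-1})$, because $\int_0^1 \overline{\omega}(y)/C_\infty(\bm{y}^{\bm{t}})\,\dy\le\int_0^1 \overline{\omega}(y)\,y^{-1}\,\dy\le\int_0^1\overline{\omega}(y)\,y^{-\lambda}\,\dy<\infty$ for $\lambda\ge1$. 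By the continuous mapping theorem, $T(g_n)\weak T(\Cb+\Gamma)=\Wb_{\infty,\omega}$ in $\ell^\infty(\Delta_{d-1})$; note that $T(\Cb+\Gamma)$ is well defined since $\Cb+\Gamma$ is bounded. Slutsky's lemma will then finish the proof once $\sup_{\bm t\in\Delta_{d-1}}|R_n(\bm t)|=o_p(1)$ is shown.

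The bulk of the work — and the step I expect to be the main obstacle — is precisely this uniform control of the remainder $R_n$. On the bulk one bounds $|\log(1+\rho_n)-\rho_n|\le\rho_n^2$ and uses $|\rho_n(\bm{y}^{\bm{t}})|\le \|g_n\|_\infty\,k^{-1/2}\,y^{-A_\infty(\bm t)}$, so the contribution is at most a multiple of $k^{-1/2}\int \overline{\omega}(y)\,y^{-2A_\infty(\bm t)}\,\dy$ over the set where $y^{A_\infty(\bm t)}$ exceeds a threshold of order $k^{-1/2}$; on the near-boundary region one uses the crude bounds $|\log\tilde C_{n,m}(\bm{y}^{\bm{t}})|\le\gamma\log k$ and $|\log C_\infty(\bm{y}^{\bm{t}})|\le|\log y|$ for the logarithmic part and $|g_n(\bm{y}^{\bm{t}})|/C_\infty(\bm{y}^{\bm{t}})\le\|g_n\|_\infty\,y^{-A_\infty(\bm t)}$ for the linear part. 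Splitting each resulting integral according to whether $y^{A_\infty(\bm t)}$ is above or below a level of order $k^{-1/2}$, writing $\overline{\omega}(y)=y^\lambda\cdot\{\overline{\omega}(y)y^{-\lambda}\}$, and invoking $\int_0^1\overline{\omega}(y)y^{-\lambda}\,\dy<\infty$ together with $|\log y|\le C_\eta y^{-\eta}$ for arbitrarily small $\eta>0$, every piece is seen to be $O_p(k^{-\delta})$ for some $\delta=\delta(\eta,\lambda)>0$, uniformly in $\bm t$ thanks to the uniform bounds $A_\infty(\bm t)\in[1/d,1]$. It is in this step that the hypotheses $\gamma>1/2$ (so that $k^{-\gamma}$ lies below the $k^{-1/2}$-scale fluctuations of $\hat C_{n,m}$, making $\tilde C_{n,m}=\hat C_{n,m}$ on the bulk), $\gamma<\lambda/2$, and $\int_0^1 \overline{\omega}(y)y^{-\lambda}\,\dy<\infty$ are used. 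A minor point, handled in the standard way, is that $\|g_n\|_\infty$ is random: one fixes a high-probability event $\{\|g_n\|_\infty\le M\}$, carries out the deterministic estimates on that event, and lets $M\to\infty$ at the end.
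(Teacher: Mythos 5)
Your argument is correct, but it is not the route taken in the paper. The paper's own proof is deliberately short: it truncates the integral at $1/q$, defines $\Wb_{n,\omega,q}$ and $\Wb_{\infty,\omega,q}$, and reduces the claim to three statements (convergence for fixed $q$, convergence as $q\to\infty$, and uniform negligibility of the tail) via Lemma~B.1 of \cite{BucDetVol11}, with the verification of all three delegated to the proof of Theorem~6.1 in \cite{BerBucDet13}; the only ingredient established here is $\sqrt k(\hat C_{n,m}-C_\infty)\weak\Cb+\Gamma$ from Corollary~\ref{cor:empcopproc}. You use exactly the same key ingredient (and correctly observe that uniform convergence of $\sqrt k(C_m-C_\infty)$ gives relative compactness, hence Condition~\ref{cond:C12}(a)), but instead of the $q$-truncation scaffold you give a self-contained argument: linearize the logarithm, recognize the leading term as a bounded linear functional $T$ of the empirical copula process, apply the continuous mapping theorem, and control the remainder directly by exploiting $C_\infty(\bm y^{\bm t})=y^{A_\infty(\bm t)}$ with $A_\infty(\bm t)\in[1/d,1]$ together with $\int_0^1\overline\omega(y)y^{-\lambda}\,\dy<\infty$, $\gamma>1/2$, and the fixed-$M$ event $\{\|g_n\|_\infty\le M\}$. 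Your remainder estimates check out (both the second-order term on the bulk, of order $k^{1/2-\lambda/(2A_\infty(\bm t))}$ uniformly in $\bm t$, and the crude bounds on the $O_p(k^{-1/2})$-neighbourhood of $y=0$ where the truncation can be active). Two minor remarks: the map $T$ is not literally defined on all of $\ell^\infty([0,1]^d)$, since $y\mapsto f(\bm y^{\bm t})$ need not be measurable for arbitrary bounded $f$; this is repaired in the standard way by restricting $T$ to the closed linear subspace of functions with measurable restrictions to the curves $\{\bm y^{\bm t}:y\in[0,1]\}$, which contains the sample paths of $\sqrt k(\hat C_{n,m}-C_\infty)$ and of $\Cb+\Gamma$. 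Also, in your sketch the upper restriction $\gamma<\lambda/2$ is never visibly invoked (your bad-region bound $\gamma\log k\cdot k^{1/2-\lambda/2}$ vanishes for any finite $\gamma$); since it is an assumption of the theorem this is harmless, but it signals that your estimates are slightly sharper than what the statement requires. What the paper's route buys is brevity and reuse of established i.i.d.\ machinery; what yours buys is a proof readable without consulting \cite{BerBucDet13}, at the cost of redoing the singular-integral estimates that reference already contains.
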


\begin{proof}[Proof of Theorem~\ref{theo:winf}.]
For $q \in \NN$, let
\begin{align*}
  \Wb_{n, \omega,q} \left( \bm t \right) 
  &=  \sqrt{k}
  \int_{1/q}^1 \log \left\{ \frac{\tilde{C}_{n,m} ( \bm y^{ \bm t} ) }
    {C_\infty ( \bm y^{ \bm t} ) } \right\} \omega \left( y,\bm  t  \right) \dy, \\
  \Wb_{\infty, \omega,q} \left( \bm t \right) 
 & =  
  \int_{1/q}^1 \frac{\Cb (\bm y^{ \bm t} ) + \Gamma (\bm y^{ \bm t}) }
  {C_\infty ( \bm y^{ \bm t} ) } \omega \left( y, \bm t \right) \dy .
\end{align*}
By Lemma B.1 in \cite{BucDetVol11} it suffices to show the following three claims:
\begin{align*}
  \text{(i) } & \Wb_{n, \omega,q}  \weak \Wb_{\infty, \omega,q} \text{ in } \ell^\infty(\Delta_{d-1}) \text{ as } n \to \infty; \\
  \text{(ii) } & \Wb_{\infty, \omega,q}  \weak \Wb_{\infty, \omega} \text{ in } \ell^\infty(\Delta_{d-1}) \text{ as } q \to \infty; \\
  \text{(iii) } & \forall \eps>0: \lim_{q\to\infty } \limsup_{n\to\infty} 
    \Pr\big\{ \sup_{\bm t \in \Delta_{d-1}} | \Wb_{n, \omega,q} (\bm t) - \Wb_{n, \omega} (\bm t) | > \eps\big\} =0.
\end{align*}
The proof of all three assertions follows exactly along the lines of the proof of Theorem~6.1 in \cite{BerBucDet13} and is based on the fact that
\[
  \sqrt{k} ( \hat{C}_{n,m} - C_\infty ) \weak \Cb + \Gamma
\]
in $\ell^\infty([0,1]^d)$ by Corollary \ref{cor:empcopproc}. The details are omitted for the sake of brevity.
\end{proof}

\begin{proof}[Proof of Theorem~\ref{thm:minidi}]
Setting $\omega(y, \bm t) =  p(y)/\log(y)$, the result is a simple corollary of Theorem~\ref{theo:winf}.
\end{proof}

\section{Proofs for Section~\ref{sec:examples}}
\label{sec:proofs:examples}

\subsection{Proofs for Subsection~\ref{subsec:MM}}
\label{subsec:proofs:MM}

The cumulative distribution function, $F_m$, and the copula, $C_m$, of $\bm{M}_m$ are given by
\begin{align*}
  F_m(\bm{u}) = \prod_{s = 1-p}^m D \bigl( (u_j^{\alpha_{mjs}})_{j = 1}^d \bigr), & &
  C_m(\bm{u}) = \prod_{s = 1-p}^m D \bigl( (u_j^{\beta _{mjs}})_{j = 1}^d \bigr), 
\end{align*}
for $\bm{u} \in (0, 1]^d$, where
\begin{align*}
  \alpha_{mjs} &= \max \{ a_{ij} : i = \max(1-s, 0), \ldots, \min(m-s, p) \}, \\
  \alpha_{mj\bullet} &= \sum_{s = 1-p}^m \alpha_{mjs}, \\
  \beta_{mjs} &= \frac{\alpha_{mjs}}{\alpha_{mj\bullet}}.
\end{align*}
The proof is straightforward by direct computation. 
% \js{I propose to omit the proof; see the separate note.} \ab{If we have enough space in the end, then we can decide again, otherwise I agree. }

\iffalse
\begin{proof}
Let $\bm{u} \in (0, 1]^d$. By definition of $M_{mj}$ and $U_{tj}$,
\begin{align*}
  \Pr[ \bm{M}_m \le \bm u ]
  &= \Pr[ \forall t = 1, \ldots, m : \forall i = 0, \ldots, k : \forall j = 1, \ldots, d : W_{t-i,j} \le u_j^{a_{ij}} ].
\end{align*}
Change indices $s = t-i$ and use the independence of the random vectors $\bm{W}_s$ to see that the previous probability is equal to
\begin{multline*}
  \prod_{s = 1-k}^m \Pr[ \forall j = 1, \ldots, d : \forall i = \max(1-s, 0), \ldots, \min(m-s, k): W_{sj} \le u_j^{a_{ij}} ] \\
% \\
%   \times &\prod_{s = 1}^m \Pr[ \forall j = 1, \ldots, d : \forall i = 0, \ldots, \min(m-s, k) : W_{sj} \le u_j^{a_{ij}} ].
  = \prod_{s = 1-k}^m D( u_j^{\alpha_{mjs}} : j = 1, \ldots, d) = F_m(\bm{u}).
\end{multline*}
Setting all coordinates of $\bm{u}$ except for the $j$th one equal to unity yields the $j$th margin:
\[
  \Pr[ M_{mj} \le u_j ] = u_j^{\alpha_{mj\bullet}} = F_{mj}(u_j).
\]
The copula, $C_m$, is therefore given by
\begin{align*}
  C_m(\bm{u}) 
  &= F_m(u_j^{1/\alpha_{mj\bullet}} : j = 1, \ldots, d) \\
  &= \prod_{s = 1-k}^m D( u_j^{\beta_{mjs}} : j = 1, \ldots, d),
\end{align*}
as required.
\end{proof}
\fi

% \begin{remark}
For $m = 1$, we have $\alpha_{1js} = a_{1-s,j}$ and thus 
\[ \alpha_{1j\bullet} = \sum_{s = 1-p}^1 a_{1-s,j} = \sum_{i=0}^p a_{ij} = 1 \]
by \eqref{eq:sumiaij}. We find that 
\begin{equation}
\label{eq:C1}
  C_1(\bm{u}) 
  = F_1(\bm{u})
  = \prod_{i=0}^p D \bigl( (u_j^{a_{ij}})_{j=1}^d \bigr)
\end{equation}
In particular, the random variables $U_{tj}$ are uniformly distributed on $(0, 1)$.
% \end{remark}

\begin{proof}[Proof of Proposition~\ref{prop:MM:Cm:limit}]
Equation~\eqref{eq:MM:Cmlimit} is a direct consequence of the next sandwich inequality for $C_m$: For $m \in \NN$ such that $m > p$, we have
\begin{equation}
\label{eq:MM:sandwich}
  \bigl( D_{m-p}(\bm{u}) \bigr)^{\frac{m+p}{m-p}} \le C_m(\bm{u}) \le \bigl( D_{m+p}(\bm{u}) \bigr)^{\frac{m-p}{m+p}}.
\end{equation}
We prove \eqref{eq:MM:sandwich}. For $j = 1, \ldots, d$, put
\[
  A_j = \max \{ a_{ij} : i = 0, \ldots, p \}.
\]
% \js{Write $\theta_j$ rather than $A_j$, for this is the extremal index of the $j$th component sequence?} \ab{For me $A_j$ is okay.}
Since all $a_{ij}$ are nonnegative and because of \eqref{eq:sumiaij}, we have $0 < A_j \le 1$. Note that
\[
  \alpha_{mjs} = A_j \qquad (s = 1, \ldots, m-p),
\]
whereas for the other $s$, we still have $\alpha_{mjs} \le A_j$. It follows that
\[
  (m-p) A_j \le \alpha_{mj\bullet} \le (m+p) A_j.
\]
In particular, for all $s = 1-p, \ldots, m$,
\[
  \beta_{mjs} = \frac{\alpha_{mjs}}{\alpha_{mj\bullet}} \le \frac{A_j}{(m-p)A_j} = \frac{1}{m-p}.
\]
We find
\begin{align*}
  C_m(\bm{u})
  &\ge \prod_{s = 1-p}^m D(u_1^{1/(m-p)}, \ldots, u_d^{1/(m-p)}) \\
  &= \bigl( D(u_1^{1/(m-p)}, \ldots, u_d^{1/(m-p)}) \bigr)^{m+p} \\
  &= \bigl( D_{m-p}(\bm{u}) \bigr)^{\frac{m+p}{m-p}}.
\end{align*}
On the other hand,
\[
  \beta_{mjs} \ge \frac{A_j}{(m+p)A_j} = \frac{1}{m+p}
  \qquad (s = 1, \ldots, m-p),
\]
from which
\begin{align*}
  C_m(\bm{u})
  \le \prod_{s = 1}^{m-p} D(u_1^{1/(m+p)}, \ldots, u_d^{1/(m+p)}) 
  = \bigl( D_{m+p}(\bm{u}) \bigr)^{\frac{m-p}{m+p}},
\end{align*}
proving also the lower bound. This completes the proof of \eqref{eq:MM:sandwich}.
\end{proof}

The limit $D_\infty$ in Proposition~\ref{prop:MM:Cm:limit} is in general different from the extreme value attractor of $C_1$. Indeed, if \eqref{eq:CDA} holds, then by \eqref{eq:C1},
\begin{multline}
\label{eq:C1attractorMM}
  \bigl( C_1(u_1^{1/m}, \ldots, u_d^{1/m}) \bigr)^{m}
  = \prod_{i=0}^p \bigl\{ D \bigl( (u_j^{a_{ij}/m})_{j=1}^d \bigr) \bigr\}^{m} \\
  \to \prod_{i=0}^p D_\infty \bigl( (u_j^{a_{ij}})_{j=1}^d \bigr), \qquad m \to \infty.
\end{multline}

\iffalse
\begin{proof}[Proof of Proposition~\ref{prop:MM:Cm:rate}]
By the sandwich inequality~\eqref{eq:MM:sandwich}, it suffices to show statements \eqref{eq:CMlim1} and \eqref{eq:CMlim2}, where in each statement, $C_m(\bm{u})$ is replaced by $(D_{m-p}(\bm{u}))^{(m+p)/(m-p)}$ and $(D_{m+p}(\bm{u}))^{(m-p)/(m+p)}$.

By the inequality $|u^{a}-1|/a \le |\log u|$ for $u \in (0,1]$ and $a > 0$, it follows that
\[
  \sup_{u \in [0, 1]} | u^{1+h} - u | = O(h), \qquad |h| \to 0.
\]
(Check the cases $h > 0$ and $h < 0$ separately.)
Let $\| \, \cdot \|$ denote the supremum norm on $\ell^\infty([0, 1]^d)$. We find
\[
  \| (D_{m \pm p})^{(m \mp p)/(m \pm p)}- D_{m \pm p} \| = O(1/m).
\]
Apply the triangle inequality to find \eqref{eq:CMlim1} and \eqref{eq:CMlim2}.
\end{proof}
\fi

\subsection{Proofs for Subsection~\ref{subsec:RR}}
\label{subsec:proofs:RR}

%Consider the random repetition process $(\bm{X}_t)_t$ defined in Subsection~\ref{subsec:RR}. For integer $-\infty \le k < \ell \le \infty$, let $\Fc_{k}^\ell$ denote the sigma-field generated by those $\bm X_{t}$ with $t \in \{k, \dots, \ell\}$. The following lemma establishes absolute regularity of the process and gives a bound for the $\beta$-mixing coefficients.

%For integer $-\infty \le k < \ell \le \infty$, let $\Fc_{k}^\ell$ denote the sigma-field generated by those $\bm X_{t}$ with $t \in \{k, \dots, \ell\}$. 
Define the $\phi$-mixing coefficient of $(\bm X_t)_t$ as 
\[
  \phi(n) = \sup_{t \in \ZZ} \sup\left\{   | \Pr(A \mid B) - \Pr(A) | : B \in \Fc_{-\infty}^t, A \in \Fc_{t+n}^\infty, \Pr(B)>0\right\} 
\] 
and note that $\beta(n) \le \phi(n)$ \citep{Bra05}. Because of the random repetition mechanism, the process $(\bm{X}_t)_t$ is $\phi$-mixing and the mixing coefficients $\phi(n)$ decay to $0$ geometrically.

\begin{lemma}
\label{lem:RR:beta}
Let $t \in \ZZ$ and $n \in \NN$, and let $B \in \Fc_{-\infty}^t$ with $\Pr(B) > 0$ and $A \in \Fc_{t+n}^\infty$. Then
\[
  | \Pr(A \mid B) - \Pr(A) | \le 2 (1-\theta)^n.
\]
\end{lemma}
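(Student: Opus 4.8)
The plan is to exploit the structure of the random-repetition mechanism: the event $A \in \Fc_{t+n}^\infty$ depends only on the values $\bm X_s$ with $s \ge t+n$, and these values are completely determined by $(\bm X_{t+n}, \bm\xi_{t+n+1}, \bm\xi_{t+n+2}, \dots, I_{t+n+1}, I_{t+n+2}, \dots)$. The only channel through which information about the past $\Fc_{-\infty}^t$ can reach $A$ is the current state $\bm X_{t+n}$; but if at least one of the indicators $I_{t+1}, \dots, I_{t+n}$ equals $1$, then $\bm X_{t+n}$ has been ``refreshed'' to one of the independent innovations $\bm\xi_s$ and carries no memory of the past. So the key event is $E = \{ I_{t+1} = \dots = I_{t+n} = 0 \}$, which has probability $(1-\theta)^n$ and on whose complement the future is independent of the past.

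First I would introduce the ``renewal'' event $E_{t,n} = \bigcap_{s=t+1}^{t+n}\{I_s = 0\}$, so that $\Pr(E_{t,n}) = (1-\theta)^n$ by independence of the $I_s$. Next I would argue that, conditionally on $E_{t,n}^c$, the random vector $\bm X_{t+n}$ is independent of $\Fc_{-\infty}^t$: on $E_{t,n}^c$ let $\tau$ be the largest index in $\{t+1, \dots, t+n\}$ with $I_\tau = 1$; then $\bm X_{t+n} = \bm\xi_\tau$ because all subsequent indicators up to $t+n$ are zero, and $\bm\xi_\tau$ together with $I_{t+1}, \dots, I_{t+n}$ is independent of $\Fc_{-\infty}^t = \sigma(\bm X_0, \bm X_{-1}, \dots) \vee \sigma(\bm\xi_s, I_s : s \le t)$. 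Since $A \in \Fc_{t+n}^\infty$ is a measurable function of $\bm X_{t+n}$ and of $(\bm\xi_s, I_s : s > t+n)$, and the latter block is independent of everything indexed by $s \le t+n$, it follows that $A$ restricted to $E_{t,n}^c$ is independent of $B$, in the sense that $\Pr(A \cap B \cap E_{t,n}^c) = \Pr(A \mid E_{t,n}^c)\,\Pr(B \cap E_{t,n}^c)$ — here one also uses that $E_{t,n} \in \sigma(I_s : t < s \le t+n)$ is independent of $B$.

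Then the estimate is a routine splitting argument. Write
\[
  \Pr(A \cap B) = \Pr(A \cap B \cap E_{t,n}) + \Pr(A \cap B \cap E_{t,n}^c),
\]
bound the first term by $\Pr(B \cap E_{t,n}) \le \Pr(B)\,(1-\theta)^n$, and rewrite the second using the conditional independence above. Dividing by $\Pr(B)$ and comparing with $\Pr(A) = \Pr(A \cap E_{t,n}) + \Pr(A \cap E_{t,n}^c)$, each ``error'' contributes at most $(1-\theta)^n$ (once from the $E_{t,n}$ part on the $A\cap B$ side, once from replacing the full $\Pr(A)$ by its conditional-on-$E_{t,n}^c$ version), yielding $|\Pr(A\mid B) - \Pr(A)| \le 2(1-\theta)^n$. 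The conclusion $\phi(n) \le 2(1-\theta)^n$, hence $\beta(n) \le \phi(n) = O((1-\theta)^n)$ via \cite{Bra05}, follows by taking suprema over $t$, $A$, $B$.

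The main obstacle is making the conditional-independence claim fully rigorous, i.e.\ carefully specifying which $\sigma$-fields generate $\Fc_{-\infty}^t$ and $\Fc_{t+n}^\infty$ in terms of the underlying i.i.d.\ building blocks $(\bm X_0, (\bm\xi_s)_s, (I_s)_s)$, and verifying that ``the state at time $t+n$ on the non-renewal event is one of the independent innovations'' is handled measurably (the random index $\tau$ is a function of $I_{t+1}, \dots, I_{t+n}$ only, so this is fine, but it must be spelled out). Everything else is bookkeeping with elementary probability.
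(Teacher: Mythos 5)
Your proof is correct and follows essentially the same route as the paper: both hinge on the event $Q=\bigcap_{i=1}^n\{I_{t+i}=0\}$ of no refresh (your $E_{t,n}$), its independence of $B$, the fact that $A\cap Q^c$ is a measurable function of $(\bm\xi_s,I_s)_{s>t}$ and hence independent of $\Fc_{-\infty}^t$, and a split of $\Pr(A\mid B)-\Pr(A)$ into two pieces each bounded by $\Pr(Q)=(1-\theta)^n$. Your extra detail on the last-refresh index $\tau$ just makes explicit the measurability that the paper leaves implicit.
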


\begin{proof}
Consider the event $Q$ that $n$ consecutive repetitions occur at times $t+1, \ldots, t+n$, that is,
\[
  Q = \bigcap\nolimits_{i = 1}^n \{ I_{t+i} = 0 \}.
\]
Note that $Q$ is independent of $A$ and $B$ and that $\Pr(Q) = (1-\theta)^n$. We have
\begin{multline*}
  |\Pr(A \mid B) - \Pr(A)| \\
  \le \Pr(A \cap Q \mid B) + \Pr(A \cap Q) + |\Pr(A \cap Q^c \mid B) - \Pr(A \cap Q^c)| \\
  \le 2 \Pr(Q) = 2 (1-\theta)^n.
\end{multline*}
The inequality follows from the independence of $Q$ and $B$ and the independence of $A \cap Q^c$ and $B$. 
\end{proof}

\begin{proof}[Proof of Proposition~\ref{prop:RR:Cm}]
For an integer $m \ge 2$, partition the event $\{ \bm{M}_m \le \bm{x} \}$ into two pieces, according to whether $I_m$ is equal to $1$ or not:
\begin{align*}
  F_m( \bm{x} )
  &=  \Pr[ \bm{M}_m \le \bm{x} ] \\
  &= \Pr[ \bm{M}_{m-1} \le \bm{x}, \, \bm{X}_m \le \bm{x} ] \\
  &= \Pr[ \bm{M}_{m-1} \le \bm{x} ] \, \Pr[ \bm{\xi}_m \le \bm{x} ] \, \theta
  + \Pr[ \bm{M}_{m-1} \le \bm{x} ] \, (1 - \theta) \\
  &= F_{m-1}( \bm{x} ) \, \{ \theta \, F_1( \bm{x} ) + 1 - \theta \}.
\end{align*}
By induction, we find
\[
  F_m( \bm{x} ) = F_1( \bm{x} ) \, [ 1 - \theta \{ 1 - F_1( \bm{x} ) \} ]^{m-1}.
\]
For the marginal distributions, we find accordingly
\[
  F_{m,j}( x_j ) = F_{1,j}( x_j ) \, [ 1 - \theta \{ 1 - F_{1,j}( x_j ) \} ]^{m-1}
\]
for $j = 1, \ldots, d$. 

%Let $\Finv_{m,j}$ be the left-continuous inverse of $F_{m,j}$. 
For $u_j \in (0, 1]$ and $m \ge 2$, we have
\begin{align*}
  u_j 
  &= F_{m,j} \bigl( \Finv_{m,j}(u_j) \bigr) \\
  &= F_{1,j}( \Finv_{m,j}(u_j) ) \, [ 1 - \theta \{ 1 - F_{1,j}( \Finv_{m,j}(u_j) ) \} ]^{m-1} \\
  &\le [ 1 - \theta \{ 1 - F_{1,j}( \Finv_{m,j}(u_j) ) \} ]^{m-1},
\end{align*}
and thus
\begin{align*}
  F_{1,j} \bigl( \Finv_{m,j}(u_j) \bigr) 
  &\ge 1 - \theta^{-1} (1 - u_j^{1/(m-1)}) \\
  &\ge 1 + \frac{1}{\theta(m-1)} \log u_j 
  \to 1 \qquad (m \to \infty).
\end{align*}
Combining the previous two displays, we find
\[
  u_j = \{ 1 + o(1) \} \, [ 1 - \theta \{ 1 - F_{1,j}( \Finv_{m,j}(u_j) ) \} ]^{m-1}
\]
from which
\begin{align*}
  F_{1,j} \bigl( \Finv_{m,j}(u_j) \bigr) 
  &= 1 - \theta^{-1} \bigl( 1 - [u_j \{ 1 + o(1) \}]^{1/(m-1)} \bigr) \\
  &= 1 + \frac{1}{\theta(m-1)} \{ \log(u_j) + o(1) \}
  \qquad (m \to \infty).
\end{align*}
Writing
\[
  \Fbinv_m( \bm{u} ) = \bigl( \Finv_{m,1}(u_1), \ldots, \Finv_{m,d}(u_d) \bigr),
\]
we have, for $\bm{u} \in (0, 1]^d$,
\[
  F_1 \bigl( \Fbinv_m( \bm{u} ) \bigr)
  \ge 1 - \sum_{j=1}^d \bigl\{ 1 - F_{1,j} \bigl( \Finv_{m,j}(u_j) \bigr) \bigr\}
  \to 1 \qquad (m \to \infty).
\]
The copula, $C_m$, of $F_m$ in $\bm{u} \in (0, 1]^d$ is given by
\begin{align*}
  \lefteqn{
  C_m( \bm{u} ) = F_m \bigl( \Fbinv_m( \bm{u} ) \bigr) 
  } \\
  &= F_1 \bigl( \Fbinv_m( \bm{u} ) \bigr)
  \bigl[ 
    1 - \theta + \theta F_1 \bigl( \Fbinv_m( \bm{u} ) \bigr) 
  \bigr]^{m-1} \\
  &= \{ 1 + o(1) \} \, 
  \biggl[ 
    1 - \theta + \theta
      C_1 
      \biggl( 
	1 + \frac{\log(u_1) + o(1)}{\theta (m-1)}, 
	\ldots, 
	1 + \frac{\log(u_d) + o(1)}{\theta (m-1)}
      \biggr) 
  \biggr]^{m-1}.
\end{align*}
If $C_1$ is in the copula domain of attraction of an extreme value copula $C_\infty$ with stable tail dependence function $L$, then
\[
  \lim_{h \searrow 0}
  h^{-1} \{ 1 - C_1(1 - hx_1, \ldots, 1 - hx_d) \}
  = L(x_1, \ldots, x_d)
\]
locally uniformly in $(x_1, \ldots, x_d) \in [0, \infty)^d$. It follows that
\[
  C_m( \bm{u} )
  \to \exp \{ - L( - \log u_1, \ldots, - \log u_d) \}
  = C_\infty( \bm{u} )
  \qquad (m \to \infty)
\]
as required.
\end{proof}

\subsection{Proofs for Subsection~\ref{subsec:OPC}}
\label{subsec:proofs:OPC}

For the proof of Proposition~\ref{prop:iidrate}, we need two lemmas, the proofs of which are elementary and omitted for the sake of brevity.

\begin{lemma}
\label{lem:exp}
Uniformly in $x \ge 0$,
\[
  \lim_{k \to \infty} k \{ (1 + x/k)^{-k} - e^{-x} \} = e^{-x} \frac{x^2}{2}.
\]
\end{lemma}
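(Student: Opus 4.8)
The plan is to show that $f_k(x):=k\{(1+x/k)^{-k}-e^{-x}\}$ converges to $g(x):=e^{-x}x^2/2$ uniformly on $[0,\infty)$ by splitting the half-line into a fixed compact piece $[0,T]$, a widening middle piece $[T,\sqrt{k}\,]$, and a far tail $[\sqrt{k},\infty)$, treating each with elementary logarithm and exponential inequalities, and finally letting $T\to\infty$.

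The first ingredient consists of the two one-sided bounds $u-\tfrac12u^2\le\log(1+u)\le u$, valid for all $u\ge0$. The upper bound gives $k\log(1+x/k)\le x$, hence $(1+x/k)^{-k}\ge e^{-x}$ and in particular $f_k\ge0$; the lower bound gives $k\log(1+x/k)\ge x-x^2/(2k)$, hence $(1+x/k)^{-k}\le e^{-x}e^{x^2/(2k)}$ and therefore
\[
  0\le f_k(x)\le k\,e^{-x}\bigl(e^{x^2/(2k)}-1\bigr),\qquad x\ge0.
\]
On the compact piece $[0,T]$ I would use the series expansion $k\log(1+x/k)=x-\frac{x^2}{2k}+R_k(x)$ with $\sup_{0\le x\le T}|R_k(x)|=O(T^3k^{-2})$, so that $(1+x/k)^{-k}=e^{-x}\{1+\tfrac{x^2}{2k}+O_T(k^{-2})\}$ uniformly over $x\in[0,T]$ and consequently $\sup_{0\le x\le T}|f_k(x)-g(x)|=O_T(k^{-1})\to0$.

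On the middle piece one has $x^2/(2k)\le\tfrac12$, so the chord bound for the convex function $v\mapsto e^v$ gives $e^{x^2/(2k)}-1\le(e^{1/2}-1)x^2/k$; inserting this into the displayed estimate yields $0\le f_k(x)\le(e^{1/2}-1)x^2e^{-x}$, and since $x\mapsto x^2e^{-x}$ is decreasing on $[2,\infty)$, for $T\ge2$ this is at most $(e^{1/2}-1)T^2e^{-T}$ throughout $[T,\sqrt{k}\,]$. On the far tail I would simply drop $e^{-x}$ and use the monotonicity of $x\mapsto(1+x/k)^{-k}$ together with $\log(1+1/\sqrt k)\ge1/\sqrt k-1/(2k)$ to get $f_k(x)\le k(1+1/\sqrt k)^{-k}\le k\,e^{1/2}e^{-\sqrt k}\to0$. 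Combining these three bounds with the trivial estimate $\sup_{x\ge T}g(x)=\tfrac12T^2e^{-T}$ (again for $T\ge2$) gives
\[
  \limsup_{k\to\infty}\ \sup_{x\ge0}\bigl|f_k(x)-g(x)\bigr|\ \le\ \bigl(e^{1/2}-\tfrac12\bigr)T^2e^{-T},
\]
and letting $T\to\infty$ finishes the proof.

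The argument is wholly elementary, which is no doubt why the paper omits it; the one point deserving attention is that the estimate $f_k(x)\le k e^{-x}(e^{x^2/(2k)}-1)$ must not be used beyond $x\sim\sqrt{k}$, since its right-hand side blows up for $x$ of order $k$. This is exactly why, on the far tail, one retreats to the cruder bound $f_k(x)\le k(1+x/k)^{-k}$, harmless because $(1+1/\sqrt k)^{-k}$ decays faster than any power of $k$.
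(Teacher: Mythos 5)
Your proof is correct, and there is nothing in the paper to compare it against: the authors explicitly omit the proofs of Lemmas~\ref{lem:exp} and~\ref{lem:log} as ``elementary,'' so your argument simply fills that gap. The three-zone decomposition is sound: the two-sided bound $u-\tfrac12 u^2\le\log(1+u)\le u$ gives $0\le f_k(x)\le k e^{-x}(e^{x^2/(2k)}-1)$ everywhere, the Taylor expansion handles $[0,T]$ with error $O_T(k^{-1})$, the chord bound is legitimately applied only where $x^2/(2k)\le\tfrac12$, and your retreat to the crude monotone bound $k(1+1/\sqrt{k})^{-k}\le k e^{1/2}e^{-\sqrt{k}}$ on $[\sqrt{k},\infty)$ correctly avoids the blow-up of the intermediate estimate for $x$ of order $k$; combining with the monotonicity of $x^2e^{-x}$ beyond $x=2$ and letting $T\to\infty$ closes the argument.
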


% \begin{proof}
% Write
% \begin{align*}
%   (1 + x/k)^{-k} 
%   &= \exp \{ - k \log(1 + x/k) \} 
%   = \exp \biggl( - \int_0^x \frac{\diff y}{1 + y/k} \biggr) \\
%   &= \exp \biggl( - x + \frac{1}{k} \int_0^x \frac{y \, \diff y}{1 + y/k} \biggr).
% \end{align*}
% The pointwise limit and uniformity on compact subsets of $[0, \infty)$ then follow from the mean-value theorem. Uniformity on $[0, \infty)$ itself follows from the bound $0 \le 1 - e^{-y} \le y$ for $y \ge 0$, which implies
% \begin{align*}
%   0 
%   \le
%   k \{ (1 + x/k)^{-k} - e^{-x} \} 
%   &=
%   (1 + x/k)^{-k} \, k \{ 1 - e^{-x+k \log(1 + x/k)} \} \\
%   &\le 
%   (1 + x/k)^{-k} \, k \{ x - k \log(1 + x/k) \} \\
%   &=
%   (1 + x/k)^{-k} \, \int_0^x \frac{y \, \diff y}{1 + y/k} \\
%   &\le
%   (1 + x/k)^{-k} \frac{x^2}{2},
% %  &\le
% %  (1 + x/3)^{-3} \frac{x^2}{2}, \qquad k \ge 3,
% \end{align*}
% which converges to $0$ as $x$ tends to $\infty$, for any fixed $k\ge 3$.
% \end{proof}

\begin{lemma}
\label{lem:log}
Uniformly in $u$ belonging to compact subsets of $(0, 1]$,
\[
  \lim_{k \to \infty}
  k \, \{ k \, (u^{-1/k} - 1) + \log u \}
  =
  \frac{1}{2} (\log u)^2.
\]
\end{lemma}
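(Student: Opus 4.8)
The plan is to substitute $s = -\log u$, so that $u^{-1/k} = e^{s/k}$ and the quantity under consideration becomes $k\{k(e^{s/k}-1) - s\}$. Uniformity over compact subsets of $(0,1]$ translates into uniformity over $s$ in a compact interval $[0,T]$ (with $T = -\log a$ if the compact set is $[a,1]$). The claim thus reduces to showing that
\[
  \sup_{s\in[0,T]}\Bigl| k\bigl\{k(e^{s/k}-1)-s\bigr\} - \tfrac{1}{2}s^2 \Bigr| \longrightarrow 0 \qquad (k\to\infty).
\]

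To this end I would expand the exponential by Taylor's theorem with the Lagrange form of the remainder: for each $k$ and each $s$ there is a point $\theta_{k,s}\in[0,s/k]$ with
\[
  e^{s/k} = 1 + \frac{s}{k} + \frac{s^2}{2k^2} + \frac{s^3}{6k^3}\,e^{\theta_{k,s}}.
\]
Rearranging gives $k(e^{s/k}-1)-s = \frac{s^2}{2k} + \frac{s^3}{6k^2}\,e^{\theta_{k,s}}$, and therefore
\[
  k\bigl\{k(e^{s/k}-1)-s\bigr\} = \frac{s^2}{2} + \frac{s^3}{6k}\,e^{\theta_{k,s}}.
\]

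Finally I would bound the remainder uniformly: as soon as $k \ge T$ one has $\theta_{k,s}\le s/k\le 1$, so $e^{\theta_{k,s}}\le e$ and hence $\sup_{s\in[0,T]} \bigl| s^3 e^{\theta_{k,s}}/(6k) \bigr| \le e\,T^3/(6k) \to 0$. Re-substituting $s = -\log u$ yields the stated limit, uniformly in $u$ over the compact set. There is no genuine obstacle here; the only point requiring care is that the error estimate be uniform in $u$ over the compact set, and this is precisely what the Lagrange remainder, together with the boundedness of $-\log u$ there, provides. (One could alternatively route the argument through Lemma~\ref{lem:exp} after the substitution $u = (1+x/k)^{-k}$, but the direct Taylor estimate is shorter and self-contained.)
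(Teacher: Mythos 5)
Your proof is correct and complete: the substitution $s=-\log u$ together with the Lagrange-remainder Taylor expansion gives exactly the required uniform bound on compact subsets of $(0,1]$. The paper omits the proof of this lemma as ``elementary,'' and your argument is precisely the kind of elementary computation intended, so there is nothing to add.
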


% \begin{proof}
% For all $x \in (0, \infty)$, as $k \to \infty$,
% \[
%   k \, (x^{1/k} - 1) = \int_1^x y^{1/k-1} \, \diff y \to \int_1^x y^{-1} \, \diff y = \log x,
% \]
% the convergence being uniformly on compact subsets of $(0, \infty)$. Hence
% \begin{align*}
%   k \, \{ k (u^{-1/k} - 1) + \log u \}
%   &=
%   \int_1^{u^{-1}} k \, (x^{1/k} - 1) \, \frac{\diff x}{x} \\
%   &\to
%   \int_1^{u^{-1}} \log x \, \frac{\diff x}{x}
%   =
%   \frac{1}{2} (- \log u)^2,
% \end{align*}
% uniformly in $u$ belonging to compact subsets of $(0, 1]$.
% \end{proof}

\begin{proof}[Proof of Proposition~\ref{prop:iidrate}]
It is sufficient to show that
\begin{equation}
\label{eq:Gammabeta}
  \lim_{\theta \downarrow 0}
  \theta^{-1} \{ C_{\theta,\beta}(u, v) - C_{0,\beta}(u, v) \}
  = \Gamma_\beta(u, v).
\end{equation}
%since then, in view of \eqref{eq:Cn},
%\[
%  \lim_{m \to \infty} m \, \{ C_{\theta/m,\beta}(u, v) - C_{0,\beta}(u, v) \}
%  =
%  \theta \, \Gamma_\beta(u, v),
%\]
Note that
\[
  C_{\theta,\beta}(u, v)
  =
  \biggl[ 
    1 + \theta \, \biggl\{ \biggl(\frac{u^{-\theta}-1}{\theta}\biggr)^\beta + \biggl(\frac{v^{-\theta}-1}{\theta}\biggr)^\beta \biggr\}^{1/\beta}
  \biggr]^{-1/\theta}.
\]
Define an `intermediate' function
\[
  D_{\theta,\beta}(u, v)
  =
  \exp \biggl[ - \biggl\{ \biggl(\frac{u^{-\theta}-1}{\theta}\biggr)^\beta + \biggl(\frac{v^{-\theta}-1}{\theta}\biggr)^\beta \biggr\}^{1/\beta} \biggr]
\]
to be interpreted as $0$ if $\min(u, v) = 0$. Write
\[
  C_{\theta,\beta}(u, v) - C_{0,\beta}(u, v)
  =
  \{ C_{\theta,\beta}(u, v) - D_{\theta,\beta}(u, v) \}
  +
  \{ D_{\theta,\beta}(u, v) - C_{0,\beta}(u, v) \}.
\]
We will treat the two parts on the right-hand side separately.

First, by Lemma~\ref{lem:exp}, as $\theta \downarrow 0$,
\begin{multline}
\label{eq:partA}
  \theta^{-1} \, \{ C_{\theta,\beta}(u, v) - D_{\theta,\beta}(u, v) \} \\
  =
  D_{\theta,\beta}(u, v) \, \frac{1}{2} 
  \biggl\{ 
    \biggl(\frac{u^{-\theta}-1}{\theta}\biggr)^\beta 
    + 
    \biggl(\frac{v^{-\theta}-1}{\theta}\biggr)^\beta 
  \biggr\}^{2/\beta}
  + o(1),
\end{multline}
the $o(1)$ term being uniformly in $(u, v) \in [0, 1]^2$. The right-hand side in \eqref{eq:partA} converges uniformly in $(u, v) \in [0, 1]^2$ to
\[
  C_{0,\beta}(u, v) \, \frac{1}{2} \{ (- \log u)^\beta + (- \log v)^\beta \}^{2/\beta},
\]
to be interpreted as $0$ if $\min(u, v) = 0$. Uniform convergence on compact subsets of $(0, 1]^2$ follows from Lemma~\ref{lem:log}, and uniform convergence on the whole of $[0, 1]^2$ follows from the fact that $(u^{-\theta} - 1)/\theta \ge - \log u$ for $u \in (0,1]$ and $\theta > 0$ and thus 
\[ 
  D_{\theta,\beta}(u, v) < C_{0, \beta}(u,v) \le \min(u, v). 
\]

Second, consider the function
\[
  f_\beta(x, y) = \exp \{ - (x^\beta + y^\beta)^{1/\beta} \}, \qquad (x, y) \in [0, \infty]^2,
\]
taking the value $0$ if $\max(x, y) = \infty$. We have
\begin{multline}
\label{eq:partB}
  \theta^{-1} \, \{ D_{\theta,\beta}(u, v) - C_{0,\beta}(u, v) \} \\
  =
  \theta^{-1} \, \biggl\{ f_\beta \biggl( \frac{u^{-\theta}-1}{\theta}, \frac{v^{-\theta}-1}{\theta} \biggr) -f_\beta(-\log u, -\log v) \biggr\}.
\end{multline}
Further,
\[
  \frac{\partial}{\partial x} f_\beta(x, y) = - f_\beta(x, y) \, (x^\beta + y^\beta)^{1/\beta-1} \, x^{\beta-1},
\]
and similarly for $\partial f_\beta(x, y) / \partial y$. In view of Lemma~\ref{lem:log}, we obtain that the expression in \eqref{eq:partB} converges, as $\theta \to 0$, to
\[
  \frac{\partial}{\partial x} f_\beta(x, -\log v) \bigg|_{x = - \log u}
  \,
  \frac{1}{2} (\log u)^2
  +
  \frac{\partial}{\partial y} f_\beta(-\log u, y) \bigg|_{y = - \log v}
  \,
  \frac{1}{2} (\log v)^2.
\]
This expression can be further simplified to
\[
  - C_{0, \beta}(u, v) \, \frac{1}{2} \{ (- \log u)^\beta + (- \log v)^\beta \}^{1/\beta-1} \, \{ (- \log u)^{\beta+1} + (- \log v)^{\beta + 1} \}
\]
Uniform convergence on $[0, 1]^2$ follows from similar arguments as before. 

Collect terms to conclude.
\end{proof}

\small 
\bibliographystyle{chicago}
\bibliography{biblio}

\end{document}